\newtheorem{theorem}{Theorem}[section]
\newtheorem{lemma}[theorem]{Lemma}
\newtheorem{proposition}[theorem]{Proposition}
\newtheorem{corollary}[theorem]{Corollary}
\newtheorem{construction}[theorem]{Construction}
\theoremstyle{definition}
\newtheorem{definition}[theorem]{Definition}
\newtheorem{example}[theorem]{Example}
\theoremstyle{remark}
\newtheorem{remark}[theorem]{Remark}
\numberwithin{theorem}{section}
\newcommand{\R}{\mathbb{R}}
\begin{document}

\title[Virtual harmony]{Virtual harmony}

\author[Dingyu Yang]{Dingyu Yang}
\curraddr{CNRS, UMR 7586, Institut de Math\'{e}matiques de Jussieu-Paris Rive Gauche, Universit\'{e} Pierre et Marie Curie, Case 247, Bureau 15-25/501, 4 place Jussieu, F-75005, Paris, France}
\email{dingyu\_yang@cantab.net, dingyu.yang@imj-prg.fr}

\thanks{The author is currently supported by the ERC via the Starting Grant ERC StG-259118 managed by the CNRS}


\date{October 23, 2015}



\begin{abstract}\addcontentsline{toc}{section}{Abstract}
This article serves a few purposes. First of all, it reviews \cite{DI} and previews and samples some results from four papers \cite{MAXCAT}, \cite{DII}, \cite{DIII} and \cite{Yang} I have been preparing. It is also a written-up and expanded version of a talk I gave at a symplectic conference in Chengdu on June 28, 2015, and it intends to provide bridges and compatibility between various pairs of virtual techniques and to demonstrate some unity among various technical viewpoints in the constructions of structures on moduli spaces in symplectic geometry. More precisely, the abstract perturbative structures (or interchangeably, virtual structures) present in each virtual theory discussed in this paper (and sometimes even the way they essentially originate in applications) are identified pairwise in a way that intertwines the (non-)perturbation mechanisms. To be more helpful to readers and not get them buried under technicalities and notations, we give the ideas and appropriate level of details so that the results will be clear to the relevant experts; meanwhile the ideas of each virtual machinery and how they are related should come through to more application-minded readers so that they might get encouraged to read papers on a given virtual machinery and possibly apply it to remove some technical assumptions in their results. It is meant to be a service to the symplectic community.
\end{abstract}

\maketitle

\tableofcontents

\section{Introduction}

This article paints a grand unifying picture which illustrates in a precise way the compatibility among various virtual theories in the symplectic geometry literature. Namely, the structures featured in any two virtual theories included in the current discussion can be identified up to R-equivalences in a way that intertwines the perturbation mechanisms and algebraic constructions to extract invariants used in the respective virtual machineries. A couple of reasons for the existence of virtual harmony are (a) each approach is dealing with the same Fredholm problem (with varying domain and analytic limiting phenomena) and all the finite dimensional approaches at least in the differential geometric category are just shadows (outputs of forgetful functors constructed one way or another) of R-equivalent (sc-)Fredholm problems, and (b) in principle\footnote{To make this principle into a theorem, one needs to establish e.g. \ref{SIMPLEYETSIG} (2) among other things.} polyfold is a universal analytic framework that other analytic approaches factorize through. To reach wider readership while being directly useful, the article tries to strike a balance between having informal discussions of key ideas, reasons and/or summaries of each approach, and including precise formulations of definitions and tools/results and quoting locations of origins. Readers should not let technical details in certain sections deter them and should read relevant virtual theories that interest them. Compatibility of various existing gluing methods is not discussed here and might be pursued in a future article.

I must admit that I have not thoroughly investigated the entire package of each approach, and I mostly get informed by listening to talks by or through personal discussions with one/two member(s) of each group. The exceptions are Hofer-Wysocki-Zehnder's polyfold package for Gromov-Witten theory and Fukaya-Oh-Ohta-Ono's Lagrangian Floer theory in \cite{FOOO} (sans the gluing analysis part of the latter) that I have been looking at more closely and thinking about for quite a while. The identification of the structures and compatibility of the perturbation theories between HWZ's sc-Fredholm structure (or interchangeably, polyfold Fredholm structure) in polyfold theory and FOOO's Kuranishi structure imposed with two extra natural conditions \ref{MAXMATCHING} (1) and (3) are the topics of my thesis \cite{Dingyu}, and they are being improved and expanded as a series of papers \cite{DI}, \cite{DII} and \cite{DIII}, the latter two of which should be available in next few months. We remark that because \cite{DI} is only a slight improvement of Part I of \cite{Dingyu} while having tikZ diagrams and being more accessible through arXiv, I will henceforth only refer to results in \cite{DI} (rather than mentioning both \cite{Dingyu} and \cite{DI} all the time) despite the fact that \cite{Dingyu} appeared earlier (and now made available on my webpage). \cite{DII} and \cite{DIII} will be substantially expanded based on Part II of \cite{Dingyu} to include more related results, so that the forgetful and globalization functors will enjoy more properties.

First, let me say a word about my approach of Kuranishi structures in \cite{DI}. The starting point of my theory of Kuranishi structures is a modified version of FOOO's Kuranishi structures presented in \cite{FOOO}, so that it has better properties, e.g. easy to obtain a good coordinate system from it (almost like obtaining a finite cover for a compact topological space), and then just sufficiently precompactly shrink a good coordinate system (in a strong shrinking sequence), one can get a Hausdorff one to further construct things (level-1 structure, perturbation, fiber product, etc), and this easy process to achieve a Hausdorff good coordinate system is crucial for the \textbf{functoriality} aspect. The modification is done by imposing two conditions to FOOO's Kuranishi structures and they are called the maximality condition and the topological matching condition; and they are non-destructive to the original definition, as my goal was to unify HWZ's theory of polyfolds/polyfold Fredholm structures and FOOO's theory of Kuranishi structures after all, and this approach pays off in \ref{FOOOISDIR} which essentially says that the theory applies to FOOO's Kuranishi structures as well. A key point of the theory in \cite{DI} is that one should \textbf{view Kuranishi structures on the level of R-equivalences} (admitting a common refinement), e.g. perturbation theory factorizes through it, and the importance of this notion is also illustrated by the fact that following a loop of arrows (in the arrow direction) back to starting virtual structure in the unification diagram \ref{UNIFDIAG}, the effect is identity up to R-equivalence of that said structure (or effect is the identity morphism between R-equivalence classes). \textbf{The global nature of R-equivalence has the advantage that it has no structural damage to the virtual structure, and if one would like, he can also work on the level of Kuranishi structures, do things up to R-equivalence.} R-equivalence is more appealing and easier for functoriality if we have a common refinement (more geometric) rather than a zig-zag of refinements (of a cobordism flavor, less geometric) connecting two `essentially the same' Kuranishi structures; and then for R-equivalence to be an equivalence relation, it is important to have some extra structure for achieving transitivity. This structure is captured by the notion of \textbf{level-1 structure} and is very useful for other things, e.g. using a level-1 structure ten distinct virtual theories can be unified in this article, and level-1 structures also play a key role in defining a good notion of morphisms between Kuranishi structures (via localizing a class of maps generated by embeddings and submersions between Kuranishi structures), and in doing a single perturbation for the whole non-compact moduli spaces with compact filtrations by virtual structures with corners. I did not include a level-1 structure in my starting structure, because a choice of level-1 structure can always be summoned (and choices can be compared via common level-1 refinements): one can construct such global level-1 structures for $\mathcal{K}_X$ and for an embedding $\mathcal{K}_X\Rightarrow \mathcal{K}'_X$, and it is less minimal to be included in the basic definition.

To get the exact correspondence between HWZ's polyfold Fredholm structure and FOOO's Kuranishi structure version as explained in \cite{FOOO}, I recently established among other things in \cite{MAXCAT} that, for an FOOO's Kuranishi structure as defined in the \cite{FOOO} (see \ref{KURANISHISTRUCTURE}), there exists a Kuranishi structure which refines it (see definition \ref{KURANISHIREFINEMENT}) and has two conditions \ref{MAXMATCHING} (1) and (3) I required (see theorem \ref{FOOOISDIR}); and any two such refinements are R-equivalent \ref{REQUIV} as Kuranishi structures with these two conditions. This essentially removes those two conditions \ref{MAXMATCHING} (1) and (3) I imposed\footnote{Although these two conditions are convenient to work with (also as just said, we can always assume them after refining, and this is independent of choices up to R-refinement).}, because the theory in \cite{DI} factors through R-equivalence (admitting common refinements). Then by using the theory in \cite{DI}, one immediately has a (Hausdorff) good coordinate system \ref{GCS} (in a way without doing induction\footnote{Just chartwise modify to have a partial order, using compactness to get a finite covering which is a good coordinate system (see \ref{GCSWOIND}), and then take a sufficently small precompact shrinking to achieve Hausdorffness.}and applicable for a map between two such structures) which is a good coordinate system as defined in \cite{FOOO} (charts have global group actions and coordinate changes are global group equivariant embeddings intertwining sections with global group actions) with extra properties. The perturbation theory in \cite{DI} is a more structured treatment and a special case of FOOO's chartwise inductive abstract perturbation explained in \cite{FO} and \cite{FOOO}, and with the help of a level-1 structure (always achievable up to a precompact shrinking with the same index set \ref{LEVEL1CATEXIST}), this perturbation agrees on the nose with their CF-perturbation theory as detailed in \cite{FOOOCF}. We note that a good coordinate system constructed directly from a Kuranishi structure defined in \cite{FOOO} without extra conditions will a priori have orbifold sections in Kuranishi charts and orbibundle embeddings as coordinate changes. Thus, in \cite{FOOONEWER}, \cite{FOOOCF}, Kuranishi charts and coordinate changes in a Kuranishi structure are respectively formulated as (not necessarily global quotient) orbifold sections and orbibundle embeddings, so that charts and coordinate changes of a good coordinate system obtained are of the same form/category as those of the starting Kuranishi structure (as one might need to go back and forth, at least partially, between these two different types of structures), and abstract perturbation theory works as well. However, the version of \cite{FOOO}, \cite{Dingyu}, \cite{DI} are written with global quotient charts and coordinate changes. So on one hand, they can adapted to the general orbifold versions without much conceptual difficulty (see discussion in \cite{DI} 1.3 (4) about local models); on the other hand, it would still be nice to work harder as in \cite{MAXCAT} and achieve global quotient charts and coordinate changes in the associated good coordinate system directly without rewriting or re-interpretation of prior works. And the relationship of such a good coordinate system with the starting point of other approaches such as \cite{MW} and \cite{CLW} are closer. The key concepts in \cite{DI} are level-1 structures (\ref{LEVEL1CATEXIST}, which gives some immediate results in the spirit and direction of this article) and R-equivalence (\ref{REQUIV}, just called equivalence therein), and they will play important roles\footnote{As we shall see that the ability to equip level-1 structure to any Kuranishi structure (and any embedding between them) in \ref{LEVEL1CATEXIST} will directly identify many things on the nose; and in the grand unifying picture, starting from any structure at any point and following a sequence of directed arrows back, the effect is identity up to R-equivalence at that point.} in reaching the unifying picture.

Besides learning ingredients from other virtual machineries from their originators at conferences, preparing a talk about how the polyfold theory fits with other virtual machineries gives me an opportunity to take a very quick look at other virtual machineries, and I hope to update this article, as my knowledge about details and advantages of other virtual theories grow. Therefore, along the way, any comments and criticisms are welcome. Strictly speaking, this article does not validate each approach (in the sense of they having no errors or gaps and having full applicability) beyond demonstrating a virtual compatibility/harmony, because one needs to solve all issues along the way in constructing invariants starting from moduli spaces to have a successful theory (for example, one issue solved in one way or by a trick might lead to difficulties in the upcoming constructions to be addressed and worked out). The author wants to emphasize that each theory involves creativity and technical power and has its virtues and advantages, and their existence is not in danger after the identifications are shown here; because, for example, being efficient versus powerful in one way in one theory might lead to an opposite comparison in other parts within the same theory, and it always good to have many perspectives (especially when they are unified). 

Assuming each virtual theory works all the way, one should be able to use the identifications provided in the grand unifying picture in this article to view the methods, tricks, reasons or results of any other theories from the viewpoint of a given particular framework, and this should be beneficial, e.g. it might help solve his/her potential problems in new settings. Users of virtual theories will get a sense of unity and less overwhelmed. After various gluing methods (at least four distinct ways) get related in the future (maybe not possible to be related as perfectly as structures and perturbations of virtual theories in this article), each package will produce independent and yet agreeing/relatable results. Hopefully, eventually a common understanding of soundness of each virtual theory is reached, researchers in symplectic geometry will explore situations in fullest generality and examine the new phenomena not accessible in semipositive cases, and will not let the scare of virtual theories get in the way of finding out reality of symplectic geometry.

\textbf{Acknowledgement:} I am extremely grateful to Helmut Hofer for his enthusiasm and insights in math, his suggestion of PFH as a project and vision of existence of forgetful functor, numerous helpful discussions and suggestions, and supports. I am indebted to Kaoru Ono for helpful discussions, in particular informing me of Kawasaki's groupoid construction via the frame bundle from a Satake orbifold representative and explaining to me their several gluing constructions among various topics, and his kind invitation to RIMS. I am grateful to Alexandru Oancea for his kindness, encouragement, discussions and, during exchanges and seminars, conceptual remarks. I am thankful to Bohui Chen for the invitation to lecturing on polyfolds and the opportunity to give a talk from which the current article originates and for his explanation of their gluing construction. I am appreciative of Dominic Joyce's insights in email exchanges, helpful discussion on a category of Kuranishi structures (not R-equivalence classes), and sharing preprint \cite{2CAT} after I sent him the previous version of this paper. I thank the organizers for the invitations to the March conference on moduli space and one-month stay at Simons center, where I met and briefly discussed with most of the groups working on virtual theories discussed in this article and got to listen to their talks. I would like to thank Lino Amorim, Yalong Cao, Cheol-Hyun Cho, Octav Cornea, Huijun Fan, Urs Fuchs, Kenji Fukaya, Andreas Gerstenberger, Gregory Ginot, Roman Golovko, Kei Irie, Janko Latschev, Maksim Maydanskiy, Dusa McDuff, Hiroshi Ohta, Frederic Paugam, Tim Perutz, Bai-Ling Wang, Shaofeng Wang, Katrin Wehrheim, Jinxin Xue, and Jingyu Zhao for questions, suggestions and/or discussions on virtual theories or math in general, and Yalong Cao, Roman Golovko, Kei Irie, Dominic Joyce, Kaoru Ono and Jake Solomon for helpful comments on the previous version of this paper. I thank the great writing environment between talks in Oberwolfach where a draft of this article was completed. Finally, I thank the excellent research atmosphere in IMJ-PRG, in particular the interactions in the Algebraic Analysis group. The author is currently supported by the ERC via the Starting Grant ERC StG-259118 managed by the CNRS.

\section{Polyfold--Kuranishi correspondence}

The main result of my PhD thesis \cite{Dingyu} is about the abstract correspondence between polyfold theory and theory of Kuranishi structures, as well as a theory of Kuranishi structure R-equivalence classes via level-1 structure. This work was inspired from a vision of Helmut Hofer, that is, one should be able to construct a polyfold analogue \cite{Yang} of Joyce's innovative Kuranishi Homology (KH) \cite{KH}. The point of KH is that no perturbation is needed. For a simple example, a certain equivalence class of a map (added with some gauge-fixing/stability data to ensure finite automorphisms) from a Kuranishi structure without boundary to an orbifold will immediately give a cycle in Kuranishi homology theory of this orbifold without any perturbation. The perturbation is in some sense pushed to the proof of showing KH is isomorphic to rational singular homology, so it is like a non-perturbative homology theory (or universal perturbation theory). Since the polyfold constructions of moduli spaces are canonical as we shall briefly indicate below, polyfolds are global (with two-way coordinate changes) and sc-Fredholm theory formally looks like a Sard-Smale theory, the construction will be simpler, and after developing some properties of this non-perturbative homology theory, one can get invariants without ever worrying about transversality. I have a version of a construction of this envisioned Polyfold Fredholm Homology (PFH) \cite{Yang}, which is still not chain-level commutative yet due to infinite dimensionality. So with this PFH, a natural question would be how PFH and KH are related. This led to my effort in \cite{Dingyu} to functorially identify polyfold Fredholm structures and Kuranishi structures as geometric structures in a way intertwining respective perturbation theories (intertwining perturbation theories is not needed for PFH = KH, but is important in unifying structures in virtual theories devised for perturbations). This polyfold--Kuranishi correspondence \cite{Dingyu} has many nice properties (one reason for delay of my papers is that I want to have a good theory of structures at the output of functors and I want to get more properties for the functors); and I want to emphasize that the content is not trying to cast polyfold perturbation theory as a Kuranishi structure theory, instead I start from understanding an abstract Kuranishi structure on its own right and figure out a functorial way to convert between these two structures making both theories compatible and sometimes providing some new (even though so far just academic) understanding for each theory (e.g. two ways of perturbations in FOOO's Kuranishi theory agree on the nose via a level-1 structure; and one can still do compact perturbation in polyfold theory even if the ambient/base space does not support sc-smooth cut-off functions, by transferring via the forgetful map to compact perturbation of a Kuranishi structure R-equivalence class). The polyfold--Kuranishi correspondence is also important if one wants to construct a relative symplectic field theory which direcly extends both theory of symplectic field theory (SFT) \cite{SFT} using Hofer-Wysocki-Zehnder's polyfold theory \cite{PolyfoldI}, \cite{PolyfoldII}, \cite{PolyfoldIII} and \cite{Polyfoldinteg} together with Fish-Hofer-Wysocki-Zehnder's polyfold construction of SFT moduli spaces \cite{FHWZ} (based on \cite{Polyfoldanalysis} and \cite{PolyfoldGW}) and Fukaya-Oh-Ohta-Ono's construction of Lagrangian Floer theory \cite{FOOO} using their original version of Kuranishi structures.

To illustrate key ideas better, we will not focus on respecting fiber product structures on the boundaries of the abstract structures that are present for the moduli spaces considered up to certain finite energies and combinatorial types, but we will occasionally refer to this more general situation in passing in various places. We recall that the theory of moduli spaces in symplectic geometry can be roughly divided into three parts (possibly interwoven/combined in various approaches), an analysis part showing moduli space has an abstract perturbative structure, a perturbation theory part for this abstract perturbative structure to achieve transversality, and an algebraic construction part which organizes the end result of previous two steps into choice-independent algebraic invariants obtained from moduli spaces. Here independence of abstract choices made with given a fixed admissible geometric choice is due to mechanisms/constructions in the last two parts above, and independence of geometric choices made is shown by considering similar moduli spaces interpolating two different geometric choices and interpolating between two different such interpolations.

\subsection{Polyfold theory in a nutshell}

Polyfold theory are developed and elaborated in Hofer-Wysocki-Zehnder's series of papers, \cite{PolyfoldI}, \cite{PolyfoldII}, \cite{PolyfoldIII}, \cite{Polyfoldinteg}, \cite{Polyfoldanalysis}, \cite{PolyfoldGW}, \cite{PolyfoldSurveyNew} and \cite{PFI}. Good places to learn polyfolds are Hofer's updated survey \cite{PolyfoldSurveyNew}, polyfold Gromov-Witten paper \cite{PolyfoldGW} and book \cite{PFI}. 

To be concrete and have a feeling of polyfold notions and ingredients, let us look at Gromov-Witten moduli spaces following \cite{PolyfoldGW}: Let $(Q,\omega)$ be a closed symplectic manifold and choose a compatible almost complex structure $J$, then consider the moduli space $\overline{\mathcal{M}}(Q,\omega, J)$ of finite energy stable marked noded $J$-holomorphic curves (such a $J$-holomorphic curve is an isomorphism class of a $J$-holomorphic map from a marked noded Riemann surface to $(Q,J)$, where an isomorphism is a diffeomorphism of Riemann surfaces identifying between two tuples of data: complex structures, nodes, marked points and maps). Each of its component is compact and can be enumerated as $\overline{\mathcal{M}}_{g,m,\beta}(Q,\omega,J)$ for $g\geq 0, 3-2g-m\leq 0, \beta\in H_2(Q,\mathbb{Z})$. One needs to make $\overline{\mathcal{M}}(Q,\omega, J)$ transverse before one can define Gromov-Witten invariants by integration to satisfy Kontsevich-Manin axioms. The polyfold approach is to view $\overline{\mathcal{M}}(Q,\omega, J)$ inside the stable map space $B(Q,\omega)$ (see \cite{PolyfoldGW} 1.4), and $\overline{\mathcal{M}}(Q,\omega, J)$ is the solution space (in the orbit space) of a section $\bar\partial_J: B(Q,\omega)\to E(Q,\omega, J)$. 

To describe a chart around $\alpha\in [\alpha]$ in $B(Q,\omega)$, one stabilizes the domain Riemann surface of $\alpha$ by adding extra marked points that are chosen with some rules to satisfy (\cite{PolyfoldGW} 3.1), describes the variation of newly marked and now stable domain Riemann surface using (reformulated) Deligne-Mumfold theory, then considers the variation of maps near the map part of $\alpha$ with codimension two transversal constraints at the images of those new marked points while allowing gluing of those maps at nodes. After specifying a tuple of data with properties, one can have a chart invariant under $G:=Aut(\alpha)$, called a \textbf{good uniformizer}, see \cite{PolyfoldGW} 3.9. Coordinate changes are included as special cases of the \textbf{universal property} of a good uniformizer, which says that given a variation $t\mapsto\beta_t$ and a good uniformizer $\tau\mapsto \alpha_\tau$ such that $\beta_{t_0}$ is isomorphic to $\alpha_{\tau_0}$ via $\phi_0$, then there exist unique $t\mapsto \tau(t)$ with $\tau(t_0)=\tau_0$ and a unique family of isomorphisms $\phi_t:\beta_t\mapsto \alpha_{\tau(t)}$ extending $\phi_{t_0}=\phi_0$ (see \cite{PolyfoldGW} 3.13 and for the moment we postpone the sc-smoothness content of this important theorem and revisit it shortly). 

To have perturbation theory (which boils down to the contraction mapping theorem on a Banach space somewhere), we need to consider the stable map space locally as a Banach space, hence being of finite regularity (for example, $H^{3,\delta}$ with $\delta>0$ in a holomorphic polar coordinate around nodes and $H^3_{\text{loc}}$ around points elsewhere). The aforementioned $\tau(t)$ in the universal property is a composition of maps, one of which is a domain reparametrization and nowhere $C^1$. This together with the forms of other maps appearing in $\tau(t)$ means that $\tau(t)$ is nowhere $C^1$, hence in particular we only have continuous coordinate change on $B(Q,\omega)$ for this choice of smooth structure $H^{3,\delta}$, or any other classical choice.

For clarity, we abstractly write $f:E_0\to F_0$ for a localized domain reparametrization $(u,a)\mapsto u\circ\phi_a$ where $a\mapsto \phi_a$ is a family of embeddings between disks in $\mathbb{C}$ parametrized by a finite dimensional vector space $\{a\}$ and $u$ is a map from a fixed disk to $\R^{2n}$ of class $H^{3,\delta}$. Observe that if we restrict to a subspace $E_1\subset E_0$ of pairs $(u,a)$ where the map $u$ in the first entry enjoys one higher regularity (here $E_1$ is still considered as a Banach space, a subspace of $E_0$ singled out by a stronger norm), then $f: E_1\to F_0$ is $C^1$. Then for $x\in E_1$, the Fr\'echet derivative $df(x): E_1\to F_0$ drops regularity levels, which prevents composition of differentials, let alone the chain rule. But if we assume $E_1$ is dense in $E_0$ and $df(x)$ extends to $Df(x): E_0\to F_0$ so that $Df(x)$ is bounded, then we can still hope for the chain rule. If we try to establish the chain rule for the composition of two maps of these features, we really need the inclusion $E_1\subset E_0$ to be also compact, similar requirement for a $F_1$ of $F_0$ with $f(E_1)\subset F_1$ (as $F_1$ is the base domain of another tangent map to compose) as well as the continuity of $(x,v)\mapsto Df(x)v$, $E_1\times E_0\to F_0$. We will asume all that for this class of maps, but this is not enough: In order to have some structure in place to talk about one higher smoothness inductively, we need to consider above again for the tangent map $Tf: (x,v)\mapsto (f(x), Df(x)v), E_1\times E_0\mapsto F_1\times F_0$ in the place of $f: E_0\to F_0$; a natural choice of data analogous to $E_1$ would be $E_2\times E_1\subset E_1\times E_0$ with a compact dense inclusion. The first factor of the map $Tf$ is $f:E_1\to F_1$, so we need the assumptions in the above discussion to hold with subscripts shifted up by 1. Then we consider $T(Tf), \cdots, T(T^{k-1}f), \cdots$. So in general, we need $E_0\supset E_1\supset E_2\supset E_3\supset\cdots\supset E_{k+1}\supset\cdots$ a nested sequence of Banach spaces with the next one compactly densely included into the previous one, this $(E_m)_{m\in\mathbb{N}_0}$ is precisely the \textbf{sc-structure} (for an sc-Banach space). All of the above assumptions are summarized as \cite{Polyfoldanalysis} 2.1. However, an equivalent definition which is more direct and easier to use and check is provided in \cite{PolyfoldI} 2.13 as the official definition of $sc^1$, the new smooth notion of a map. The amazing fact is that this is not the order HWZ discovered it: They found the official definition first and \cite{Polyfoldanalysis} 2.1 was then obtained when they tried to relate it to the classical smoothness. This gives a glimpse of the analytic prowess of HWZ. $sc^k$ can be defined inductively via $T^{k-1}f$. Being \textbf{sc-smooth} just means being $sc^k$ for all $k$. We can regard sc-smoothness for a map $f$ between two sc-Banach spaces as a property for the map $f$ between 0-levels and its actual sc-smoothness is then specified by the information from the sc-structures of domain and target sc-Banach spaces.

An sc-open set $O$ in an sc-Banach space $E$ means $(O_m)_{m\in\mathbb{N}_0}$ such that $O_0$ is an open set in $E_0$ and $O_m=O_0\cap E_m$. Following HWZ, we often use $E$ to mean both level-0 Banach space $E_0$ as well as $(E_m)_m$. It is often clear in each context which meaning is being used; and when there are two possible meanings, the contents are the same. For example, $O\subset E$ can mean both the inclusion between the 0-th levels and the levelwise inclusion, and both contents are equivalent by definition. This new sc-smooth notion gives rise to new a local model $O:=r(U)$, where $r:U\to U$ is an sc-smooth map between the same sc-open set $U$ in sc-Banach $E$ and idempotent ($r\circ r=r$). We only need to remember $(O,E)$ (for the case without corners), and all the related constructions using a choice of $r$ with $O=r(U)$ are independent of such choices. We will omit $E$ in the notation and write retract just as $O$, but this means there is an associated chosen sc-Banach space $E\supset O$\footnote{Different such $E$'s (of possibly different dimensions) can give rise to the same sc-structure on $O$, and one can also fix an equivalence class of such a choice $E$ in this sense. Namely, $E$ and $E'$ are equivalent if there exist sc-smooth maps $\phi: E\to E'$ and $\phi':E'\to E$ such that $(\phi'\circ\phi)|_O=Id_O$ and $(\phi\circ\phi')|_O=Id_O$. Note that we do not need $\phi|_O=Id_O$.} giving rise to a smooth structure on $O$ and all the relevant discussions (e.g. sc-smoothness of a map from $O$ and its tangent) are done via $E$. $O$ is called a \textbf{retract}, and in general it cannot be locally modelled by an sc-Banach space.

In fact gluing of (not necessarily $J$-holomorphic) stable maps (see \cite{PolyfoldGW}) can be described as a retract, and when we turn on the gluing parameter $a\not=0$, the modulus of which is converted via a \textbf{gluing profile} into the length of the glued domain cylinder, and two half cylinder maps (arised by localizing near nodes) are twist-glued together into a finite cylinder map, and this process is denoted by $\oplus_a$ which is a linear map from two unglued half cylinders into a twist-glued cylinder (and is the identity when $a=0$). The end regions of maps are not used in gluing, hence the dimension of glued map space (as an ambient space of interest) jumps at $a=0$. We can find a linear map $\ominus_a$ such that $(\oplus_a,\ominus_a)$ is an isomorphism. So that we can parametrize geometric glued cylinder space $\bigsqcup_a\text{im}\oplus_a$ by $\bigsqcup_a\ker\ominus_a=r(U)$ realizeable as a retract, where maps in $r(U)$ have the same domain to compare. So $\bigsqcup_a\ker\ominus_a$ provides a local model as a retract for space $\bigsqcup_a\text{im}\oplus_a$ which is dimension varying. We can use a retract built from this plus other information as a parameter space of a good uniformizer of $B(Q,\omega)$ centered at a smooth map such that the universal property holds sc-smoothly. This says a good uniformizer is an \textbf{M-polyfold} (a paracompact and second countable space build with retracts as local models with sc-diffeomorphic coordinate changes, and here we just have a single chart). 

We can also keep track of symmetry using \'etale proper Lie groupoid language based on M-polyfold with a generalization of properness (\cite{PolyfoldIII} 2.2) and such a groupoid is called an \textbf{ep-groupoid}. From properties of $B(Q,\omega)$ and sc-smooth universal property of good uniformizers, one can construct an ep-groupoid $B$ such that the orbit space $\underline{B}$ (modulo isomorphisms) is identified with $B(Q,\omega)$. A Morita-equivalence class of an ep-groupoid is called a \textbf{polyfold}. One observes that $B(Q,\omega)$ is canonically a polyfold after fixing merely a choice of appropriate gluing profile and a sequence of strictly increasing non-zero weights below $2\pi$ for the sc-structure (important for sc-Fredholmness later), see also \ref{PKAPPLICATION} (3).

A first order differential operator maps as a (local) section from the base into the fiber with 1 functional regularity lower. If this is defined notationally as $U_m\to F_m$, then $F_{m+1}$ (from functional regularity perspective) still makes sense over a point in $U_m$. This structure is encoded in a local \textbf{strong bundle}, written as $U\triangleleft F$, and is respected by strong bundle maps (linear in the fiber direction). A strong bundle retract typically is denoted by $K:=R(U\triangleleft F)$. A section and respectively an $sc^+$ section, which mapping from the base to fibers preserves the regularity and respectively increases the regularity by one, make sense. We can lift a good uniformizer of $B(Q,\omega)$ to a good uniformizer of $E(Q,\omega, J)$ with the parameter space of a good uniformizer for $E(Q,\omega, J)$ as a strong bundle retract. Then $E(Q,\omega, J)$ becomes a strong bundle polyfold the same way as $B(Q,\omega)$ is a polyfold. The Cauchy-Riemann operator $\bar\partial_J$ is an sc-smooth section of strong bundle $\pi: E(Q,\omega, J)\to B(Q,\omega)$. Although it is a section between spaces of varying dimensions, near every smooth point, we can locally fill $f:r(U)\to R(U\triangleleft F)$ into the graph $\text{gr}(h)$ for some \textbf{filling} (\cite{Polyfoldanalysis} 1.35) $h: U\to F$ between sc-Banach spaces without changing its zero set and kernel and cokernel of its linearization at zeros and thus in essence has the same\footnote{Here, $\text{gr}(h)$ being an extension of $f$, namely $f$ sits in $\text{gr}(h)$, is also important.} Fredholm behavior. $h$ has linear sc-Fredholm linearizations but implicit function theorem does not hold for general sc-smooth maps with linear sc-Fredholm linearizations, because $x\mapsto Dh(x), E_1\mapsto L(E_0, F_0)$ is not continuous, which is used in the usual argument. We have to restrict to a special class of maps, called basic class, where a version of implicit function theorem holds. First, let us define an open germ around $0$ in $E$, namely, $\mathcal{O}(E,0):=(V_m)_{m\in \mathbb{N}_0}$ is an \textbf{open germ} around $0$ in $E$ if $V_m$ is open in $E_m$ and contains 0, and $V_{m+1}\subset V_m$\footnote{Here $\cap V_m=\{0\}$ is allowed. This is as opposed to an sc-open set $(U_m)_{m\in\mathbb{N}_0}$ in $E$ which means $U_0$ being open in $E_0$ and $U_m=U_0\cap E_m$, and thus $\cap U_m$ is dense in $U_0$.}). A map germ mapping from $\mathcal{O}(E,0)$ is also denoted by $[f,0]$. A map germ $h: \mathcal{O}(\R^m\oplus W,0)\mapsto (W\oplus \R^N,0)$ is \textbf{of basic class} if it is of the form $(v,x)\mapsto (x-B(v,x), C(v,x))$ such that $B(v,\cdot)$ between each level of the same regularity is a contraction parametrized by $v$ with arbitrarily small contraction constant if $(v,x)$ is sufficiently close to $(0,0)$. 

\begin{definition} (sc-Fredholm section, \cite{PolyfoldII} 3.6, \cite{Polyfoldanalysis} 1.40) An sc-smooth section of a strong M-polyfold bundle is called an \textbf{sc-Fredholm section}, if it is regularizing\footnote{All $x$ of the level $m$ mapping under $f$ to the level $m+1$ in the fiber is actually of the level $m+1$, for all $m\in\mathbb{N}_0$.}, and locally at every smooth point $x$, $f$ is of the form $f: (r(U),0)\to (R(U\triangleleft F),f(0))$ (still denoted by $f$) and there exist a filling germ $[h,0]$ and an $sc^+$ section germ $[\text{gr}(t),0]:\mathcal{O}(U,0)\to U\triangleleft F$ with $t(0)=h(0)(=f(0))$ such that $[h-t,0]$ under some strong bundle diffeomorphism is a map germ of basic class.
\end{definition}

A germ of the solution space of $f$ locally around $0$ corresponds (via some compact perturbation $t$ given in the definition) to a germ of solution space of $\R^n\mapsto \R^m, v\mapsto C(v, x(v))$ around $v=0$, where $x(v)$ is the unique fixed point of $B(v,\cdot)$. If the former thus the the latter is transverse (in good position) at $0$, then there exists a good parametrization (\cite{PolyfoldII} 4.2) for a manifold germ with corners at $0$, the collection of those can be sc-smoothly (hence smoothly in the usual sense as being in finite dimension) compatibly fit together in the base M-polyfold of the section. This tranversality can always be achieved, and it involves locally reducing to a finite dimensional situations (where one never needs to fit local things together globally and then perturb), and one just locally perturbs them (by multisection if keep tracking of symmetry) and brings them back to the global structure and use a global canonical section type argument. Then by the version of the above with group actions, one can show that the perturbed solution set is just a usual finite dimensional weighted branched orbifold. We need the level-0 Banach space involved in defining retract for the base to be a separable Hilbert space to have sc-smooth cut-off functions, and from this we can construct an auxiliary norm\footnote{In \cite{PolyfoldII}, existence of an auxiliary norm requires reflexive $(0,1)$-fibers; but in \cite{PFI}, a more general auxiliary norm is introduced which always exists and it was stated there this general version works just as well. It seems that reflexivity of auxiliary norms plays a role from Hofer's SFT polyfold lectures at IHES.} to control the size of the perturbation so that the solution set of perturbed section is still compact. This is polyfold Fredholm theory developed in \cite{PolyfoldII} and \cite{PolyfoldIII}. 

To directly check that a given section is sc-Fredholm might be hard, but there is a criterion for sc-Fredholmness in \cite{PolyfoldGW} 4.26 abstractly summarizing the content of section 4.5 therein on deriving sc-Fredholmness from properties 4.23 and 4.25 (see also \cite{Wehrheim} 4.3 and 4.5), and $\bar\partial_J$ satisfies this criterion (which essentially boils down to the fact that one can localize and rearrange into the situation of standard $\bar\partial_0$ between Banach spaces on each level, which one can then solve and have norm controls throughout this process, see \cite{PolyfoldGW} section 4.6 and chapter 5).

Although I have quickly explained the concepts in polyfold theory with the help of Gromov-Witten setting for concreteness, the sc-analysis and the polyfold theory of sc-Fredholm section can be completely separated into independent parts: abstract Fredholm part in \cite{PolyfoldI}, \cite{PolyfoldII} and \cite{PolyfoldIII}, and sc-analysis part for $J$-holomorphic curves in \cite{Polyfoldanalysis} and \cite{PolyfoldGW}. This means an easy transfer to the setting of a new moduli spaces. If a moduli space is not compact, but has compact filtrations with boundary being a union of (covers of) fiber products of some previous filtrations (as in SFT or Lagrangian Floer theory), one hopes to (and can in the SFT and Lagrangian Floer cases) perturb to achieve sufficient transversality while keeping this `master equation' structure and if this is achievable, this can be done using a single multisectional perturbation\footnote{This is also doable via level-1 structure for a moduli space filtered by Kuranishi structures with corners where the boundary of a Kuranishi structure is a union of fiber products of Kuranishi structures appeared earlier in the filtration, see \ref{LEVELONECC} and \cite{MAXCAT}.} on the whole moduli space (constructed on filtrations inductively). Then one uses his/her favorite algebra model to extract invariants. HWZ in \cite{Polyfoldinteg} uses an sc-version of de Rham cohomology.

\subsection{Sc-Fredholm sections viewed as generalizations of classical Fredholm sections.}

To see some polyfolds in action in an extremely simple case and to have some feeling of how sc-setting works and also in the unifying spirit of this article, we explain next that polyfold Fredholm theory can be regarded as a vast generalization of classical Fredholm theory in the following sense. We emphasize that theorem \ref{SCFREDGENERALIZE} below will not capture general situation in polyfold theory, but it fits certain classical Fredholmness into the polyfold framework. This subsection is purely instructive to somewhat illustrate the concept of sc-Fredholmness and not essential in what follows. Since the result is local, we can assume the bundle is trivial, and it can be packaged into a global setting without additional work except notation. The following is a simple observation from reading \cite{PolyfoldGW} and \cite{Wehrheim}, but this viewpoint of sc-Fredholmness generalizing usual Fredholmness in some sense seems to be new.

\begin{theorem}\label{SCFREDGENERALIZE} (sc-Fredholmness generalizing usual Fredholmness)
\begin{enumerate}[(1)]
\item If $f:U\to F$ is a $C^\infty$ section in a Banach bundle $U\times F\to U$ in the usual sense, and suppose that there exist sc-structures $(U_m)_{m\in\mathbb{N}_0}$ and $(F_m)_{m\in\mathbb{N}_0}$ for $U$ and $F$ respectively, and the induced $f: U_m\to F_m$ is $C^\infty$ for all $m$, then $f$ is sc-smooth. 
\item If additionally $f:U_m\to F_m$ is classically Fredholm\footnote{\label{NEARZERO} $f: U_m\to F_m$ has Fredholm linearizations at zeros, which then implies by openness of Fredholm condition that, $f-f(p): U_m\to F_m$ has Fredholm linearizations for all points $p\in U_m$ sufficiently close to $f^{-1}(0)$.} with the same Fredholm index for all $m$ and regularizing (if $y\in U_m$ and $f(y)\in F_{m+1}$, then $y\in U_{m+1}$), then $f$ restricted to an open germ near $f^{-1}(0)$ is sc-Fredholm in the polyfold sense (of basic class germ after bringing the value of section to $0$, filling and a strong bundle coordinate change). 
\item Moreover, if such an $f$ is in good position (transverse) in the usual sense, then it is trivially in good position in the polyfold sense, then the zero set is a smooth manifold with corners in the usual sense in both settings and the induced smooth structures agree.
\end{enumerate}
\end{theorem}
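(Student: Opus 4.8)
The plan is to handle the three parts in order, each reducing to a short verification once recast in polyfold language; (1) is an induction, (2) is a standard implicit-function reduction to basic-class form, and (3) falls out of that reduction once one observes the obstruction space vanishes.

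For (1) I would induct on the order of sc-differentiability using the characterisation of $sc^k$ via iterated tangent maps. Set $Df(x):=df(x)|_{U_0}\in L(U_0,F_0)$, the level-$0$ Fr\'echet derivative, which exists since $f\colon U_0\to F_0$ is $C^\infty$ and which agrees on each $U_m$ with the level-$m$ derivative by uniqueness of derivatives and density of $U_{m+1}$ in $U_m$. Then, on the sc-level $m$ of $TU$ and $TF$, the map $Tf(x,\xi)=(f(x),Df(x)\xi)$ reads $U_{m+1}\oplus U_m\to F_{m+1}\oplus F_m$, $(x,\xi)\mapsto(f(x),df(x)\xi)$; its first component is $C^\infty$ by hypothesis and its second is $C^\infty$ because $(x,\xi)\mapsto df(x)\xi$ is $C^\infty$ as a map $U_m\oplus U_m\to F_m$ and restriction to the continuously included subspace $U_{m+1}$ in the first slot preserves smoothness. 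Hence $Tf$ again satisfies the hypothesis of (1) with the shifted sc-structures, so by induction $T^kf$ is $sc^0$ for all $k$, i.e. $f$ is $sc^\infty$; the base case $sc^1$ is just classical differentiability on level $0$ together with the $sc^0$-property of $Tf$ just noted.

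For (2), note first that since $f$ is regularizing and $0\in F_m$ for all $m$, every zero of $f$ lies in $\bigcap_m U_m=U_\infty$; so $f^{-1}(0)$ consists of smooth points, and after shrinking the base to an sc-open neighbourhood $O$ of $f^{-1}(0)$ the sc-Fredholm property need only be checked at smooth points of $O$, at each of which (by openness of the Fredholm condition, footnote~\ref{NEARZERO}) the level-$0$ linearisation $L:=df(x_0)\colon U_0\to F_0$ is Fredholm with the common index. Since the base is locally a genuine sc-Banach space rather than a proper retract, the filling germ is trivial, $h=f$. Choose sc-splittings $U=N\oplus X$ with $N=\ker L\subset U_\infty$ and $F=R\oplus Y$ with $R=\mathrm{im}\,L$ and $Y\subset F_\infty$ finite-dimensional, with projections $P,Q$, so that $L|_X\colon X\to R$ is an sc-isomorphism. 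Writing $g(n,x):=f(x_0+n+x)-f(x_0)$, one has $Pg$ of class $C^\infty$ on each level, $Pg(0,0)=0$, $D_xPg(0,0)=L|_X$ invertible, whence the classical implicit function theorem, applied compatibly on all levels, gives a $C^\infty$ germ $x=x(n)$ solving $Pg=0$. With $B(n,x):=-(L|_X)^{-1}\bigl(Pg(n,x)-(L|_X)x\bigr)$ one checks $(L|_X)^{-1}Pg(n,x)=x-B(n,x)$ and $D_xB(0,0)=0$, so $B(n,\cdot)$ is a contraction with arbitrarily small constant near $0$ on each level; with $C(n,x):=Qg(n,x)$ the germ $(n,x)\mapsto\bigl((L|_X)^{-1}Pg(n,x),\,C(n,x)\bigr)$ is of basic class. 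This germ is exactly $h-t$ conjugated by the strong bundle diffeomorphism given by recentring the base at $x_0$ together with the linear sc-isomorphisms $\mathrm{id}_N$, $(L|_X)^{-1}\oplus\mathrm{id}_Y$ of the splittings, where $t$ is the constant $sc^+$ section germ $t\equiv f(x_0)\in F_\infty$ (a constant into the $\infty$-level is $sc^+$, and $t(0)=f(0)$ in the chart). Hence $f$ is sc-Fredholm on $O$.

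For (3), classical transversality at a zero means $L=df(x_0)$ is surjective on every level (with the usual compatibility with the boundary quadrant), so in the reduction above $Y=0$ and the obstruction component $C$ vanishes identically; the polyfold good-position condition, i.e. transversality of $v\mapsto C(v,x(v))$, then holds trivially. Consequently the zero set near $x_0$ is the \emph{entire} local graph $\{x_0+n+x(n)\}$ over an open subset (or partial quadrant) of $N\cong\R^{\dim N}$, parametrised by the $C^\infty$ map $n\mapsto x_0+n+x(n)$; since $f^{-1}(0)\subset U_\infty$ this same map is $C^\infty$ into $U_m$ for every $m$ and is simultaneously the M-polyfold good parametrisation, so all the classical smooth-manifold-with-corners structures of $f^{-1}(0)$ inside the $U_m$ coincide with one another and with the sc-smooth structure. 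I expect the main obstacle to be the bookkeeping behind the phrase ``$L$ is an sc-Fredholm operator'' in (2): one must verify that Fredholmness on each level with a common index together with the regularizing property genuinely forces a finite-dimensional kernel and cokernel lying in $U_\infty$ resp.\ $F_\infty$ and compatible with the whole sc-filtration, so that the splittings $U=N\oplus X$, $F=R\oplus Y$ can be chosen sc and the above is a legitimate \emph{strong bundle} conjugation rather than a merely levelwise one. This consistency of the classical Fredholm data across the sc-structure (and the fact that it is only needed near, and at the smooth points over, $f^{-1}(0)$) is the technical heart of the argument; everything else is routine once it is in place.
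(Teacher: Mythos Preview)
Your proof is correct in outline, and for Parts (1) and (3) essentially matches the paper. For Part (2), however, you take a genuinely different route from the paper, and the comparison is worth spelling out.

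The paper does \emph{not} construct the basic-class form directly. Instead it verifies sc-Fredholmness via HWZ's abstract criterion \cite{PolyfoldGW} 4.26, which requires checking three conditions: (i) the linearisation is linear sc-Fredholm, (ii) a sequential compactness property for the linearisations along convergent base sequences, and (iii) levelwise $C^1$. Condition (i) is handled by a dedicated Lemma~\ref{LINEARSCFREDEQUIV} (proved in detail: kernel and cokernel stabilise across levels, the level-0 Fredholm splitting induces Fredholm splittings on all levels), and condition (ii) by a separate compactness lemma. Your direct construction of the basic-class form via $B(n,x)=-(L|_X)^{-1}(Pg(n,x)-(L|_X)x)$ bypasses the criterion entirely and in particular avoids the compactness lemma: the contraction property follows immediately from levelwise $C^1$ and $D_xB(0,0)=0$. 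This is cleaner, and the paper in fact acknowledges in Remark~\ref{SIMPLEYETSIG}(3) that this direct route (``using $Df(\cdot)$ continuous with respect to the operator norm levelwise'') also works.

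That said, you correctly flag but do not prove the point on which everything hinges: that the level-0 Fredholm splitting $U_0=N\oplus X$, $F_0=R\oplus Y$ is automatically an \emph{sc}-splitting, i.e.\ $N\subset U_\infty$, $Y\subset F_\infty$, and $L|_X\colon X_m\to R_m$ is a Banach isomorphism for every $m$. This is exactly the content of the paper's Lemma~\ref{LINEARSCFREDEQUIV}, and its proof is not entirely routine---for instance, showing $\ker L\subset U_\infty$ uses density of $U_m$ in $U_0$ together with the level-0 injective estimate on $X$, and showing the images satisfy $L(U_m)=L(U_0)\cap F_m$ (which is what makes $(L|_X)^{-1}$ bounded on each level) uses the common-index hypothesis in an essential way. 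Without this lemma your strong-bundle conjugation is only formal. So your approach is correct and arguably more transparent, but the missing lemma is precisely the ``technical heart'' you identified, and the paper supplies it.
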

\begin{proof}
\begin{enumerate}[(1)]
\item Check directly or apply \cite{Polyfoldanalysis} 2.4.
\item We first use a lemma: 
\begin{lemma}\label{LINEARSCFREDEQUIV} $T: E\to F$ is linear sc-Fredholm (\cite{PolyfoldI} 2.8) if and only if the induced $T: E_m\to F_m$ are linear Fredholm in the usual sense with the same Fredholm index for all $m\in\mathbb{N}_0$. In fact, if the latter holds, a Fredholm splitting for $T: E_0\to F_0$ will induce a Fredholm splitting for $T:E_m\to F_m$ for all $m$.
\end{lemma}

\begin{proof} Only the `if' part is non-trivial. Let $E_0:=K\oplus H$ and $F_0:=H'\oplus C$ be a Fredholm splitting\footnote{$K:=\ker T$ and  $C:=F_0/T(E_0)$ finite dimensional, $H':=T(E_0)$ is Banach and $T|_{H}: H\to H'$ is a linear isomorphism between Banach spaces.} for $T: E_0\to F_0$. Hence the inverse of $T|_H$ is bounded, and the injective estimate $\|(T|_H) (h)\|_{H'}\geq c\|h\|_{H}$ holds. It is better for the following to denote $T: E_m\to F_m$ etc explicitly as $T|_{E_m}$ and denote $\iota_m:E_m\to E_0$ and $\tilde\iota_m:F_m\to F_0$, the inclusions between Banach spaces in the sc-structures. Note that $\tilde\iota_m\circ (T|_{E_m})=T\circ\iota_m: E_m\to F_0$. We will arrive at the desired conclusion via a few elementary observations:
\begin{enumerate}[(a)]
\item $T|_{H\cap E_m}:H\cap E_m\to F_m$ is injective for all $m$:

If $v\in H\cap E_m$ such that $Tv=0$ in $F_m$, then $T(\iota_m(v))=(T\circ\iota_m)(v)=(\tilde\iota_m\circ (T|_{E_m}))(v)=0$ in $F_0$ and $\iota_m(v)\in H\cap E_0=H$, thus $\iota_m(v)=0$ in $H\cap E_0$. The injectivity of $\iota_m: E_m\to E_0$ hence the induced $\iota_m|_{H\cap E_m}: H\cap E_m\to H\cap E_0$ gives $v=0$ in $H\cap E_m$.

\item $K\cap E_m=K$ for all $m$. 

Suppose not true for some $m$. Since $K$ is finite dimensional, and $K\cap E_m\subset K$ but $K\cap E_m\not=K$, then $\overline{K\cap E_m}=K\cap E_m\not=K$. Then there exists a $k_0\in K$, such that $\|k-k_0\|_K>\epsilon_0>0$ for all $k\in K\cap E_m$. Since $E_m$ is dense in $E_0$, there exists $x_i\in E_m$ such that $\lim_i\|x_i-k_0\|_{E_0}=0$. So $T(x_i)\to T(k_0)=0$ in $F_0$, so $T(x_i)=T(pr_K x_i)+T(pr_H x_i)=T(pr_{H} x_i)\to 0$ in $F_0\cap T(E_0)=H'$. By the above injective estimate of $\|pr_H x_i\|_H\leq \frac{1}{c}\|(T|_H) (pr_H x_i)\|_{H'}\to 0$, we have $pr_H x_i\to 0$ in $H$ and in $E_0$. So, $0<\epsilon_0\leq \lim_i\|pr_K x_i-k_0\|_{E_0}=\lim_i\|(x_i-pr_H x_i)-k_0\|_{E_0}\leq \lim_i\|pr_H x_i\|_{E_0}+\lim_i\| x_i-k_0\|_{E_0}=0$, a contradiction.

\item $K\cap E_m\subset \ker (T|_{E_m})$ for all $m$:

If $v\in K\cap E_m$, again by $(\tilde\iota_m\circ (T|_{E_m}))(v)=(T\circ\iota_m)(v)=T(v)=0$. Since $\tilde \iota_m$ is injective, $(T|_{E_m})(v)=0$ in $F_m$, so $v\in \ker(T|_{E_m})$.

\item $\ker (T|_{E_m})=\ker T=K$ for all $m$:

By (b) and (c), we have $\ker (T|_{E_m})\supset K\cap E_m=K=\ker T$. As $\ker (T|_{E_m})\subset \ker T$, we have $\ker (T|_{E_m})=\ker T$.

\item $T(E_m)=T(E_0)\cap F_m(=H'\cap F_m)$ and $F_m/T(E_m)=F_0/T(E_0)=C$ for all $m$:

Note that $T(E_m)\subset T(E_0)\cap F_m$. Next we establish that this inclusion is an equality. Since $F_m$ is dense in $F_0$, we have a dense inclusion $F_m/(T(E_0)\cap F_m)\subset F_0/T(E_0)$ into a finite dimensional space, hence $F_m/(T(E_0)\cap F_m)=F_0/T(E_0)$. The inclusion between two closed subspaces of $F_m$ at the start of this paragraph gives $(\ast)$ $F_m/T(E_m)\supset F_m/(T(E_0)\cap F_m)=F_0/T(E_0)$. Since $\text{ind}\;T=\dim K-\dim F_0/T(E_0)=\text{ind} (T|_{E_m})=\dim K-\dim F_m/T(E_m)$. So, $F_m/T(E_m)=F_0/T(E_0)=C$, and this together with $(\ast)$ gives $T(E_m)=T(E_0)\cap F_m=H'\cap F_m$. The latter says that the image of $T|_{E_m}$ is induced from $H'=T(E_0)$, and it is actually the regularizing property in \cite{Wehrheim} 3.1 (ii) (if $x\in E_0$ with $T(x)\in F_m$, then $x\in E_m$).

\item $T$ is linear sc-Fredholm with the sc-splitting of $E$ and $F$ induced from the Fredholm splitting of $T: E_0=K\oplus H\to F_0=H'\oplus C$ chosen above (with any fixed choice $H$ and hence $H'$):

Norms on a finite dimensional space are equivalent. Thus by (a), (d) and (e), the Fredholm splitting of $T: E_0\to F_0$ induces Fredholm splittings for $T|_{E_m}$ for all $m\in\mathbb{N}_0$, and $T:K\oplus (H\cap E_m)_m\to (H'\cap F_m)_m\oplus C$ is linear sc-Fredholm with constant filtrations $K$ and $C$.
\end{enumerate}
\end{proof}

Now we try to verify sc-Fredholmness of $f: U\to F$ via the HWZ's criterion \cite {PolyfoldGW} 4.26. As in footnote \ref{NEARZERO}, by shrinking $U$ towards $f^{-1}(0)$ if necessary, we can assume for each $p\in \cap_m U_m$ (thus $f(p)\in \cap_m F_m$), $\tilde f_p:=f-f(p): U_m\to F_m$ has a linear Fredholm linearization at $p$ in the usual sense for all $m\in\mathbb{N}_0$.

For any $p\in \cap_m U_m$, we try to establish those three criteria in \cite{PolyfoldGW} 4.26. Criterion (1) holds by lemma \ref{LINEARSCFREDEQUIV} and the last paragraph. Criterion (3) holds by levelwise $C^\infty$ thus in particular, $C^1$. Criterion (2) is slightly non-trivial and we will show it below:

By shifting of the domain $U$, we can assume $p=0$ (mainly for notational simplicity), and we have the sc-splitting of the linearization $Dh(0): K\oplus H\to H'\oplus C$ of $h:=f-f(0)$ at $p=0$, by lemma \ref{LINEARSCFREDEQUIV} again, here $H=(H_m)_m$ and $H'=(H'_m)_m$ for short. We want to establish that:

\begin{lemma} For any level $m$, a sequence $x_j\in K$ converging to 0, and $v_j\in H_m$ bounded satisfying $Dh(x_j) v_j=y_j+z_j$ for $y_j\in F_m$ converging to $0$ and $z_j\in F_{m+1}$ bounded, then $v_j$ has a convergent subsequence in $H_m$. (Notice that this is not quite \cite{PolyfoldGW} 4.26 (2), as there $\lim_j x_j\not=0$ in $K$ in general, and our weaker condition will only lead to \cite{PolyfoldGW} 4.23 therein valid for $b$ in a germ around 0, which is enough as basic class is a germ condition.)
\end{lemma}

\begin{proof} Note that $Dh(0)v_j=(Dh(0)-Dh(x_j))v_j+y_j+z_j=:\tilde y_j+z_j$, where $\tilde y_j\in F_m$ also converges to 0. Note that $pr_{H'}z_j$ can be solved in $H_{m+1}$ and solution $w_j$ is uniformly bounded using the injective estimate (following from $Dh(0)|_{H_{m+1}}:H_{m+1}\to H'_{m+1}$ is a linear isomorphism between Banach spaces): $c_{m+1}\|w_j\|_{H_{m+1}}\leq \|(Dh(0)|_{H_{m+1}})w_j\|_{H'_{m+1}}=\|pr_{H'}z_j\|_{H'_{m+1}}\leq c'_{m+1}\|z_j\|_{F_{m+1}}$. Then $Dh(0)v_j=\tilde y_j+pr_C z_j+Dh(0)w_j$ in $E_m$. Then projecting onto $C$, we have $\|pr_C z_j\|_{F_m}=\|pr_C \tilde{y}_j\|_{F_m}\leq c''_m\|\tilde y_j\|_{F_m}\to 0$. Since $v_j\in H_m$, use the injective estimate again but on the $m$-level, we have:

\begin{align*}c_m\|v_j-w_j\|_{H_m}&\leq \|Dh(0)(v_j-w_j)\|_{H'_m}=\|\tilde y_j+pr_C z_j\|_{F_m}\\
&\leq (1+c''_m)\|\tilde y_j\|_{F_m}\to 0.\end{align*} 

Thus $v_j-w_j$ converges to 0 in $H_m$. Since $w_j$ is bounded in $H_{m+1}$ which is compact in $H_m$, we can assume a subsequence $w_{j'}$ converges to $w_\infty$ in $H_m$. Then $v_{j'}=(v_{j'}-w_{j'})+w_{j'}$ converges in $H_m$ (to $w_\infty$).
\end{proof}

This proves criterion (2), thus by \cite{PolyfoldGW} 4.26, $f$ is sc-Fredholm at 0 (namely, $[f,0]$ is an sc-Fredholm germ), hence at every $p\in \cap_m U_m$ after shifting back the domain.

\item Due to the regularizing property, the zero sets of $f$ in both viewpoints agree as a set and as a manifold with the same induced smooth structure.
\end{enumerate}
\end{proof}
\begin{remark}\label{SIMPLEYETSIG} We include three remarks:
\begin{enumerate}[(1)]
\item Theorem \ref{SCFREDGENERALIZE} strictly speaking does not fit all Fredholm sections in the usual sense as sc-Fredholm sections (it only says being levelwise Fredholm in the usual sense with the same Fredholm index and regularizing implies sc-Fredholmness). One can fit both types of Fredholm sections in a single framework by a trick I learned from a talk by Hofer that one can define sc-structure that if $E_{m+1}\to E_m$ is not compact for some $m$, then $E_m=E_0$ as Banach spaces for all $m\in \mathbb{N}_0$ (namely, it is either a genuine sc-structure or a constant structure $(E_m=E_0)_{m\in\mathbb{N}_0}$ of possibly infinite dimensional Banach space $E_0$). However, the point of \ref{SCFREDGENERALIZE} is to fit a large class of Fredholm sections as genuine sc-Fredholm sections without modifying the definition of the latter and gives some feeling of the latter as a much more general object. Almost all classical Fredholm sections appeared in moduli spaces satisfy the hypothesis of \ref{SCFREDGENERALIZE}: They often arise as first order elliptic operators thus are regularizing. They are defined as maps between (weighted) Sobolev spaces which can be equipped with natural sc-structures. The reason they are smooth in the usual sense as a map between a fixed level is also the reason they are smooth as a map between higher levels. The Fredholm indices are usually topological invariants independent of levels in the Sobolev set-up (if levels are specified carefully not to change the Fredholm indices, e.g. if the domains of maps appeared in the domain of Fredholm section are fixed but non-compact, and the weights are chosen below the first positive eigenvalues of the asymptotic operators and above the largest non-positive eigenvalues\footnote{Increasing weights are for compact embeddings between levels, namely, sc-structure.}). One can also have a perturbation correspondence in theorem \ref{SCFREDGENERALIZE}, which is left to the readers.
\item \ref{SCFREDGENERALIZE} symbolically denoted as $\textbf{Emb}$ is useful in the following: It is a unit of building sc-Fredholm section in a local chart with no domain reparametrization. To show gluing intertwines with identifications in this paper, we hope to achieve $\textbf{Emb}\circ$(gluing of classically Fredholm sections/Kuranishi charts) can be filled into $\text{(polyfold gluing)}\circ \textbf{Emb}$.
\item \ref{SCFREDGENERALIZE} (2) can also be proved using $Df(\cdot)$ is continuous with respect to the operator norm levelwise.
\end{enumerate}
\end{remark}

\subsection{In good and general position, fiber-standard subbundle, and (finite) dimensional reductions}

Since Kuranishi structures and their variants are compatible systems of local finite dimensional reductions, we will focus on ways of (finite) dimensional reductions in this section. Some of results below are standard for classical Fredholm sections\footnote{We assume that all Banach manifolds are second countable and paracompact.}, but now viewed together with relationship pointed out, or with new applications, and generalized to situations for sc-Fredholm sections in polyfold setting. Although the definition and results are phrased for an sc-Fredholm section, they will also make sense for a Fredholm section in the usual sense by taking all the sc-structures to be constant Banach spaces of possibly infinite dimensions (thus not compact inclusions between levels, but the chain rule also holds classically), the retraction is identity, a local filling is trivially the section itself, and a strong bundle ep-groupoid is then replaced by a Banach bundle \'etale-proper Lie groupoid, so that we do not need to write the same definitions and results twice. Some results are from \cite{Dingyu}, and some are realized in writing this paper and will also be included in \cite{DII}. Some results are of local nature and stated locally but can be thought of as a local situation invariant under the stabilizer group for an sc-Fredholm section in a global strong bundle ep-groupoid. We state  some \textbf{conventions}: We use the same letter for an ep-groupoid as well as its object space, which we will emphasize again in subsection \ref{FORGETSUBSECTION}. We denote $E|_p$ for the fiber of $E$ at $p$, as we might use $\cdot_p$ to denote an object associated to and depending on $p$. A section being transverse means it being transverse to the zero section. A (transverse) perturbation $s$ to $f$ means that $f+s$ is transverse in some sense (e.g. in good position), and a perturbation $\tilde f$ of $f$ means that $f'$ is transverse. For an M-polyfold $B$, we use $B_\infty$ to denote the set of smooth points in $B$ which locally is $\varphi(O_\infty)=\varphi(r(U_\infty))$ for some local chart $\varphi: O\to B$ and $U_\infty:=\cap_m U_m$. We will often loosely call a functor between ep-groupoids as a map between them.

\begin{definition}\label{IGPPOINTWISE} (in good position, \cite{DII}, also c.f. \cite{PolyfoldII} 4.10 4.14, and 4.16) Let $f:B\to E$ be an sc-Fredholm section where $B$ may have corners (based on tame retracts in partial quadrants \cite{PFI} 2.34), $x\in B_\infty$, and $N\subset (E|_x)_\infty=(E_\infty)|_x$ be a finite dimensional vector space containing $f(x)$. Then $f$ is said to be \textbf{in good position to $N$ at $x$}, if in a local coordinate (still using the same $f$), there is a local $sc^+$ section $t$ with $t(0)=f(0)$, $f-t: (r(U),0)\to (R(U\triangleleft F),0)$ has a filling of the form of the graph of $h: U\to F$ where $U\subset C\subset A$ with a partial quadrant $C$ in an sc-Banach $A$, and $N$ is mapped onto $\tilde N$ in $R(0,F_\infty)\subset F_\infty$, such that the linearization $h'(0)$ at $0$ satisfies:
\begin{enumerate}
\item $\text{span}(\text{im} (h'(0)),\tilde N)=F$,
\item $(h'(0))^{-1}(\tilde N)\cap C$ is open in $(h'(0))^{-1}(\tilde N)$, and 
\item there exists an sc-complement $Z$ of $(h'(0))^{-1}(\tilde N)$ in $A$ and $\epsilon>0$ such that $v\in C$ and $v+z\in C$ for $v\in (h'(0))^{-1}(\tilde N)$, $z\in Z$ with $\|z\|_Z \leq \epsilon \|v\|_{(h'(0))^{-1}(\tilde N)}$ are equivalent.
\end{enumerate}
\end{definition}

\begin{definition}(in general position, \cite{DII}, also c.f. \cite{PolyfoldII} 5.17) Let $f:B\to E$ be an sc-Fredholm section where $B$ may have corners (based on tame retracts in partial quadrants \cite{PFI} 2.34), $x\in B_\infty$, and $N\subset (E|_x)_\infty$ be a finite dimensional vector space containing $f(x)$. Then $f$ is said to be \textbf{in general position to $N$ at $x$}, if in a local coordinate (using the same $f$), there is a local $sc^+$ section $t$ with $t(0)=f(0)$, $f-t: (r(U),0)\to (R(U\triangleleft F),0)$ has a filling of the form of the graph of $h: U\to F$ where $U\subset C\subset A$ with a partial quadrant $C$ in an sc-Banach $A$, and $N$ is mapped onto $\tilde N$ in $R(0,F_\infty)\subset F_\infty$, such that the linearization $h'(0)$ at $0$ satisfies:
\begin{enumerate}
\item $\text{span}(\text{im} (h'(0)),\tilde N)=F$, and
\item $\text{span}((h'(0))^{-1}(\tilde N),\;\bigcap_{\mathcal{H}} T_0\mathcal{H})=T_0 A$, where the intersection is taken over all faces $\mathcal{H}$ of $C$. (Here a face is the closure of a connected component of the space of points with the corner index one, and $T_0\mathcal{H}$ and $T_0A$ can be identified with linear $\mathcal{H}$ and $A$ respectively.)
\end{enumerate}
\end{definition}

\begin{remark} For $f(x)=0$ and $N=\{0\}$, being in good (respectively general) position to $N$ at $x$ is exactly the definition of $f$ being \textbf{in good (respectively general) position at $x$} in \cite{PolyfoldII}. Notice that $h'(0)$ depends on $t$, but if $f$ is in good/general position using a choice of chart, a filling and an $sc^+$ section $t$, then for any other chart and filling, there exists an $sc^+$ section making $f$ in good/general position. If $f(x)=0$, then $(f'(x))^{-1}(N)$ is a well-defined object independent of all choices made. Being in general position implies being in good position, thus we will state results for sc-Fredholm sections in good position so that those results will hold for sc-Fredholm sections in general position as well.
\end{remark}

\begin{definition} (fiber-standard, \cite{DII}) A strong M-polyfold bundle $L$ is \textbf{fiber-standard}, if its local models can be chosen to be strong bundle retracts of the forms $R(U\triangleleft N)$ for some strong bundle retraction $R$ with $N_0$ of $N=(N_m)_{m\in\mathbb{N}_0}$ being finite dimensional (thus $N_m=N_0$ for all $m\in\mathbb{N}_0$).

A subbundle $L\to W$ of a strong bundle $E\to W$ is said to be \textbf{fiber-standard} if there exist local coordinates for $E\to W$ of the form $R(U\triangleleft F)$ for which $L\to W$ is of the form $R(U\triangleleft N)$ where $R|_{U\triangleleft N}: U\triangleleft N\to U\triangleleft N$, $N_0$ of $N$ is finite dimensional and $N_0\subset F_\infty$, such that the induced changes of coordinates between local models of $L$ are sc-diffeomorphisms of (automatically strong) bundle retracts. We often just write the local model as $R(U\times N)$, as the information about strongness is trivial.
\end{definition}

\begin{definition}\label{IGP} ($f\pitchfork L$, $f$ in good position to $L$, \cite{DII}) $f$ is an sc-Fredholm section of a strong bundle $E\to B$, and $L\to W$ is a fiber-standard subbundle of $E|_W$ where $W\subset B$ is open. $f$ is said to be \textbf{in good position to $L$}, or $f\pitchfork L$, if $f$ is in good position to $L_x$ as in \ref{IGPPOINTWISE} for all $x\in W$. Similarly for being in general position.
\end{definition}

\begin{lemma}\label{QUOTIENTTHEORY} (quotient theory, \cite{DII}) If $f: B\to E$ is an sc-Fredholm section of a strong bundle $E$, and $L$ is a fiber-standard subbundle of $E|_W$ for $W$ open in $B$. Then the quotient bundle $(E|_W)/L$ is a strong bundle and the induced $(f|_W)/L$ is sc-Fredholm. $f$ is in good position to $L$ if and only if $(f|_W)/L$ is in good position. Similarly for being in general position.
\end{lemma}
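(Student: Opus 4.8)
The plan is to do everything in local models and thereby reduce the statement to elementary facts about a constant finite-rank subbundle. By the fiber-standard hypothesis we may choose local strong-bundle coordinates in which $E=R(U\triangleleft F)$ and $L=R(U\triangleleft N)$, where $N_0\subset F_\infty$ is finite dimensional (so $N_m=N_0$ for all $m$) and $R$ preserves $U\triangleleft N$ with $R|_{U\triangleleft N}$ a retraction onto $L$; the coordinate changes are sc-diffeomorphisms of strong-bundle retracts respecting these forms, and if $B$ has corners the base retraction $r$ is tame. All three assertions will then follow from the corresponding local statements, and one globalizes over $W$.

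For the bundle assertion, write the fiber part of $R$ as $\rho$. Since $\rho(u,N_0)\subset N_0$, the retraction descends to $\bar R$ on $U\triangleleft(F/N)$, $\bar R(u,[v])=(r(u),[\rho(u,v)])$. Picking a finite-dimensional sc-complement $M$ of $N$ in $F$ (legitimate because $N_0\subset F_\infty$ is finite dimensional, so $M_m:=M_0\cap F_m$ is an sc-structure and $F/N\cong M$ sc-linearly), one transports $\bar R$ to $(u,w)\mapsto(r(u),\mathrm{pr}_M\rho(u,w))$ on $U\triangleleft M$, which is visibly sc-smooth; idempotence follows from $R\circ R=R$ together with $\rho(r(u),N_0)\subset N_0$, and using that $\rho(u,\cdot)$ is idempotent on $F$ for $u=r(u)$ one checks the retract's fiber is $\rho(u,F)/L_u$, so $\bar R(U\triangleleft(F/N))$ is a (tame, in the corner case) strong-bundle retract modelling $(E|_W)/L$. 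The coordinate changes of $E$ preserve $L$, hence descend to sc-diffeomorphisms of the quotient retracts, and the ep-groupoid structure maps descend for the same reason; this produces the strong bundle $(E|_W)/L\to W$ together with the quotient strong-bundle map $q\colon E|_W\to(E|_W)/L$, locally $(u,v)\mapsto(u,[v])$, which is levelwise surjective and Fredholm with constant kernel $N\subset F_\infty$, so by Lemma \ref{LINEARSCFREDEQUIV} its linearizations are linear sc-Fredholm of index $\dim N_0$.

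For sc-Fredholmness of $f|_W/L=q\circ f|_W$, I would verify the criterion \cite{PolyfoldGW} 4.26 at each $x\in W$ with $f(x)\in L_x$ (equivalently each zero of $f|_W/L$), using as filling $\bar h:=q\circ h$ where $h$ is a filling of $f|_W$ near $x$, after choosing $h$ appropriately or shrinking the chart toward the zero set so that $\bar h$ is again a legitimate filling (its filling directions stay complementary to $N$ and no spurious zeros appear). The three conditions transfer: $\bar h'(0)=q\circ h'(0)$ is a composition of linear sc-Fredholm maps, hence linear sc-Fredholm of index $\dim N_0$ larger, giving criterion (1) via Lemma \ref{LINEARSCFREDEQUIV}; criterion (3), the levelwise $C^1$-continuity of the linearization, is immediate as $q$ is a fixed bounded operator; and for criterion (2), given bounded $v_j$ with $\bar h'(0)v_j=\bar y_j+\bar z_j$, $\bar y_j\to 0$ in $(F/N)_m$, $\bar z_j$ bounded in $(F/N)_{m+1}$, one lifts $\bar y_j,\bar z_j$ to $y_j\to 0$ in $F_m$ and $z_j$ bounded in $F_{m+1}$ and absorbs the $N_0$-valued error $h'(0)v_j-y_j-z_j$ into $z_j$ — it is bounded in $F_m$ because $\{Dh(x_j)\}$ is bounded on the $m$-level and $v_j$ is bounded there, and $N_0$ is finite dimensional inside $F_\infty$ — and then applies the corresponding property of $h$; this is the same mechanism as in the sub-lemmas of Lemma \ref{LINEARSCFREDEQUIV}. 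One can alternatively argue directly by normalizing $f|_W-t$ to a basic-class germ $(v,x)\mapsto(x-B(v,x),C(v,x))$ and quotienting the target by the image $\tilde N\subset F_\infty$ of $L_x$, the point being that projecting a parametrized contraction off finitely many smooth directions is still a parametrized contraction while the discarded finite-dimensional data is absorbed into the $C$-component, so the quotient germ is again of basic class after a further strong-bundle diffeomorphism.

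For the position statements, with the filling $\bar h=q\circ h$ and $sc^+$ section $\bar t=q\circ t$ (every $sc^+$ section of the quotient lifts, so this is no loss) one has $\bar h'(0)=q\circ h'(0)$, $(\bar h'(0))^{-1}(0)=(h'(0))^{-1}(N)=(h'(0))^{-1}(\tilde N)$, and $\mathrm{im}\,\bar h'(0)=F/N$ iff $\mathrm{span}(\mathrm{im}\,h'(0),\tilde N)=F$. Since the ambient sc-Banach space $A$, the partial quadrant $C$, the admissible sc-complements $Z$, and the faces $\mathcal H$ of $C$ are unchanged by quotienting the fiber, conditions (1), (2), (3) of Definition \ref{IGPPOINTWISE} for $f$ in good position to $L_x$ at $x$ are term-by-term equivalent to conditions (1), (2), (3) for $f|_W/L$ in good position at $x$, and likewise for general position, where condition (2) involves only $(h'(0))^{-1}(\tilde N)$ and $\bigcap_{\mathcal H}T_0\mathcal H$. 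Using the robustness of the position conditions under change of filling and $sc^+$ section from the remark after Definition \ref{IGP}, the pointwise equivalence is independent of auxiliary choices, and quantifying over $x\in W$ yields the lemma. The main obstacle is the sc-Fredholmness step, specifically the bookkeeping that $q\circ h$ (after a suitable choice of filling or chart shrinking) is a genuine filling of $q\circ f$ and that the finite-rank quotient does not spoil the basic-class normal form; once the local models are in place, everything else is routine.
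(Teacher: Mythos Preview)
The paper does not actually prove this lemma: it is stated as a result from \cite{DII}, a paper in preparation, and is only previewed here. So there is no in-paper proof to compare against. I will therefore assess your argument on its own merits.

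Your overall strategy is sound and is the natural one: reduce to local models where $L$ corresponds to a constant finite-dimensional subspace $N\subset F_\infty$, observe that quotienting by $N$ preserves the strong-bundle retract structure, sc-Fredholmness, and the basic-class form, and then read off the position equivalence from $(\bar h'(0))^{-1}(0)=(h'(0))^{-1}(\tilde N)$ together with the fact that the base data $(A,C,\mathcal{H})$ are untouched. The position argument in particular is clean and correct as written.

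Two points deserve tightening. First, sc-Fredholmness is a condition at \emph{every} smooth point of $W$, not only at points with $f(x)\in L_x$; your parenthetical ``equivalently each zero of $f|_W/L$'' suggests you are only checking at zeros of the quotient section. The same mechanism (shift by an $sc^+$ section $\bar t=q\circ t$ with $t(0)=f(0)$, then fill) works at any smooth point, but you should say so explicitly rather than restrict attention. Second, the phrase ``after choosing $h$ appropriately or shrinking the chart toward the zero set so that $\bar h$ is again a legitimate filling (its filling directions stay complementary to $N$ and no spurious zeros appear)'' is the crux of the sc-Fredholmness step and is currently an assertion rather than an argument. A filling must satisfy $h|_{r(U)}=f$, have the same zero set as $f$ on $r(U)$, and have matching kernel/cokernel at zeros; you need to verify that $q\circ h$ inherits these from $h$ relative to $q\circ f$, which uses that $N$ is constant and finite-dimensional and that the retraction $R$ respects $N$. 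Your alternative route via the basic-class normal form is actually the cleaner way to close this: projecting the $W$-component of $(v,x)\mapsto(x-B(v,x),C(v,x))$ off a finite-dimensional subspace of $W_\infty$ manifestly preserves the contraction property, and the discarded $N$-component is absorbed into the finite-dimensional $C$-part. I would promote that from an aside to the main argument.
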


\begin{proposition}\label{PFDFDRED} (\cite{DII}) Let $f: B\to F$ be an sc-Fredholm section in a strong bundle ep-groupoid as in \cite{PolyfoldIII}, and let $x\in f^{-1}(0)$ a solution in the object space and $G_x$ the stabilizer group at $x$. Then there exists a $G_x$-invariant finite dimensional subspace $N_x$ of $(E|_x)_\infty=(E_\infty)|_x$ such that $f$ is in good position to $N_x$. For any such choice of $N_x$, there exists a $G_x$-invariant fiber-standard subbundle $L_x\to W_x$ where $W_x$ is a $G_x$-invariant sc-open neighborhood of $x$ in $B$\footnote{\label{FORMINGABASIS}$B$ is the object space of the base ep-groupoid. This means that the ep-groupoid restricted to $W_x$ becomes a translation groupoid $G_x\times W_x\to W_x$ with the usual action, and such $W_x$'s form a basis around $x$ in the object space $B$.} such that $f$ is in good position to $L_x$ as in \ref{IGP}. By \ref{QUOTIENTTHEORY}, $(f|_{W_x})/L_x: W_x\to (E|_{W_x})/L_x$ is an sc-Fredholm section in a strong bundle ($G_x$-translation) ep-groupoid and in good position, then by \cite{PolyfoldIII}, $V_x:=((f|_{W_x})/L_x)^{-1}(0)=(f|_{W_x})^{-1}(L_x)=\{f\in L_x\}:=\{z\in W_x\;|\; f(z)\in (L_x)|_z\}$ is a $G_x$-invariant manifold with corners in the usual sense and $f|_{V_x}:V_x\to L_x|_{V_x}$ is an invariant \textbf{local finite dimensional reduction} of $f$. We often write $f|_{\{f\in L_x\}}$. The same result holds for $f$ being in general position, which is the version used in \cite{Dingyu}.
\end{proposition}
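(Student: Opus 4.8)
The plan is to prove Proposition \ref{PFDFDRED} by assembling the pieces already established: the local normal form of an sc-Fredholm section, an averaging argument to make the dimensional reduction data $G_x$-equivariant, the quotient theory of Lemma \ref{QUOTIENTTHEORY}, and finally the implicit-function-type result of \cite{PolyfoldIII}. I would proceed in four steps.

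\emph{Step 1: Finding a $G_x$-invariant $N_x$ making $f$ in good position.} First I would pass to a $G_x$-invariant local coordinate around $x$; since the base is an ep-groupoid, a neighborhood basis of $x$ consists of $G_x$-invariant sc-open sets on which the groupoid restricts to a translation groupoid $G_x\times W_x\to W_x$ (this is the content of the properness/étale axioms, cf. \cite{PolyfoldIII}). In such a chart, by the definition of sc-Fredholm section there is an $sc^+$ section $t$ with $t(0)=f(0)=0$ (using $x\in f^{-1}(0)$) and a filling $h:U\to F$, $U\subset C\subset A$, with $h-t$ of basic class germ; its linearization $h'(0)$ is a linear sc-Fredholm operator. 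Choosing a finite-dimensional subspace of $F_\infty$ spanning a complement of $\mathrm{im}\,h'(0)$ gives good position to that subspace at $x$, and I can then average the section $t$ and this spanning subspace over the finite group $G_x$ (finiteness of $G_x$ is the étale hypothesis) to get a $G_x$-invariant choice; the three conditions in \ref{IGPPOINTWISE} are preserved because they are invariant under the linear group action and, in the boundary case, the faces of $C$ are permuted by $G_x$. Transporting $\tilde N$ back through $R$ and the chart yields the desired $G_x$-invariant $N_x\subset(E_\infty)|_x$ with $f(x)=0\in N_x$ and $f$ in good position to $N_x$.

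\emph{Step 2: Extending $N_x$ to a $G_x$-invariant fiber-standard subbundle $L_x$.} Given any such $N_x$, I would build a strong bundle retract representative $R(U\triangleleft F)$ of $E$ near $x$ in which the finite-dimensional $\tilde N$ sits inside $R(0,F_\infty)\subset F_\infty$, then take the constant family $U\times N$ with $N=(N_0)_m$ (finite dimensional, hence $N_m=N_0$), restrict the strong bundle retraction $R$ so that $R|_{U\triangleleft N}$ lands in $U\triangleleft N$ (possibly after shrinking $U$ so that the relevant projections behave), and check the coordinate changes between such models are sc-diffeomorphisms of bundle retracts — this is exactly the definition of fiber-standard. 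Making the whole construction $G_x$-equivariant is again an averaging over the finite group $G_x$ acting on $W_x$ and on the fibers; the resulting $L_x\to W_x$ is $G_x$-invariant, and since good position to $N_x$ is an open condition in $x$ (the three conditions of \ref{IGPPOINTWISE} persist under small perturbation of the base point, by openness of the Fredholm/surjectivity conditions and the partial-quadrant condition), after shrinking $W_x$ we get $f$ in good position to $L_x$ at every point of $W_x$, i.e.\ $f\pitchfork L_x$ in the sense of \ref{IGP}.

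\emph{Step 3: Quotient and solution set.} Now I would invoke Lemma \ref{QUOTIENTTHEORY} directly: since $L_x$ is a fiber-standard subbundle of $E|_{W_x}$ and $f\pitchfork L_x$, the quotient $(E|_{W_x})/L_x$ is a strong bundle, $(f|_{W_x})/L_x$ is an sc-Fredholm section of it, and it is in good position (to the zero section). All of this is $G_x$-equivariant, so it takes place in the $G_x$-translation ep-groupoid $G_x\times W_x\to W_x$. Then \cite{PolyfoldIII} (the polyfold implicit function theorem / regular value theorem for sc-Fredholm sections in good position) gives that $V_x:=((f|_{W_x})/L_x)^{-1}(0)$ is a $G_x$-invariant manifold with corners in the usual sense, and by construction this zero set equals $(f|_{W_x})^{-1}(L_x)=\{z\in W_x\mid f(z)\in (L_x)|_z\}$. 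Restricting $f$ to $V_x$ gives a section $f|_{V_x}:V_x\to L_x|_{V_x}$ of a finite-rank bundle over a finite-dimensional manifold with corners, invariant under $G_x$ — this is the local finite dimensional reduction. The general-position version is verbatim the same, substituting "in general position" for "in good position" throughout and quoting the general-position clauses of \ref{QUOTIENTTHEORY} and of \cite{PolyfoldIII}.

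\emph{Main obstacle.} The routine part is the invocation of the quoted theorems; the delicate point is Step 2 — arranging that the chosen finite-dimensional $N_x$ genuinely extends to a \emph{fiber-standard subbundle} in a way compatible with the strong bundle retraction $R$ and with the coordinate changes, while simultaneously keeping everything $G_x$-equivariant and keeping good position open in the base point. One must be careful that the restricted retraction $R|_{U\triangleleft N}$ really preserves $U\triangleleft N$ (which may force shrinking $U$ and hence $W_x$, and one must check $W_x$ can still be taken $G_x$-invariant and from the neighborhood basis of footnote \ref{FORMINGABASIS}), and that the averaging over $G_x$ does not destroy the partial-quadrant/face conditions in the boundary case; this bookkeeping, rather than any single hard estimate, is where the real work lies.
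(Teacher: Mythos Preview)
The paper does not contain a proof of this proposition; it is stated with attribution to the author's forthcoming paper \cite{DII}, and the proposition statement itself already sketches the logical structure by explicitly invoking Lemma~\ref{QUOTIENTTHEORY} and \cite{PolyfoldIII} for the final conclusions. So there is no ``paper's own proof'' to compare against beyond that sketch.

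Your proposal is a reasonable fleshing-out of that sketch. Step~3 is exactly what the proposition statement says to do, and Steps~1--2 supply the existence arguments the statement merely asserts. A few comments: in Step~1 your averaging is fine for the subspace $N_x$ (replace it by $\sum_{g\in G_x} g\cdot N_x$, still finite-dimensional and containing a complement of $\mathrm{im}\,h'(0)$), but averaging the $sc^+$-section $t$ is unnecessary and could be awkward since $t$ lives in a local chart --- it is cleaner to first pick a $G_x$-equivariant chart and filling (which the ep-groupoid structure provides) and then choose $N_x$ invariantly. In Step~2 you correctly flag the real issue: ensuring $R|_{U\triangleleft N}$ preserves $U\triangleleft N$. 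In practice one does not try to restrict an arbitrary $R$; rather one \emph{constructs} $L_x$ directly, e.g.\ by choosing a $G_x$-equivariant $sc^+$ bundle map $\psi_x:W_x\times N_x\to E|_{W_x}$ which is the inclusion over $x$ and fiberwise injective (parallel transport in a $G_x$-invariant connection, or simply a constant trivialization in the filled coordinates), and then $L_x:=\psi_x(W_x\times N_x)$ is fiber-standard by construction. This sidesteps the retraction-compatibility bookkeeping you worry about. The openness-of-good-position argument you give is correct and is the mechanism by which one shrinks $W_x$.
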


The following is not the most general, but is sufficient for our purposes.

\begin{definition}\label{SCFREDEMBEDDING} (dimensional reduction, embedding, refinement, \cite{DII}) An sc-Fredholm $f_1: B_1\to E_1$ is a \textbf{dimensional reduction} of another sc-Fredholm $f:B\to E$ as sc-Fredholm sections if
\begin{enumerate}
\item $B_1\subset B$ and $E_1\subset E|_{B_1}$ as sets, and $f_1=f|_{B_1}$;
\item around $z\in B_1$, there exists a strong bundle retract local model $R((U_1\oplus M)\triangleleft (F_1\oplus M))$ for $E$, such that $R|_{(U_1\oplus\{0\})\triangleleft(F_1\oplus\{0\})}$ is a strong bundle retraction and its image $R((U_1\oplus \{0\})\triangleleft (F_1\oplus\{0\}))$ is a local model for $E_1$ and coordinate changes respect this structure; (In particular, $B_1$ is a \textbf{sub-M-polyfold} of $B$ in the sense of \cite{PFI} 2.20 with the local retraction $r=\tilde r\circ \tilde\pi=\tilde\pi\circ \tilde r$, where $\tilde r:=R|_{(U_1\oplus M)\triangleleft (\{0\}\oplus \{0\})}$ and $\tilde \pi: U_1\oplus M\to U_1\oplus M, (u,m)\mapsto (u,0)$ are commuting retractions; and $E_1$ is a strong subbundle of $E|_{B_1}$; and we have a submersion $\pi: U\to B_1$ from an open neighborhood $U$ of $B_1$ in $B$ and a strong bundle map $\Pi: E|_U\to E_1$ which is a fiberwise strong projection and covers $\pi$.) 
\item $(f_1)^{-1}(0)=f^{-1}(0)$ (or $(f_1)^{-1}(0)=f^{-1}(0)\cap \pi^{-1}(B_1)$ if $f_1$ is only a \textbf{local dimensional reduction}); and
\item in a local model $R((U_1\oplus M)\triangleleft(F_1\oplus M))$ around any smooth $x\in B_1$ with $x$ mapping to $0$ and using the same notation for $f$, there exist a local $sc^+$-section $t: U_1\oplus\{0\}\to F_1\oplus\{0\}$ with $t(x)=f_1(x)=f(x)$, and a filling $[h,0]:\mathcal{O}(U_1\oplus M,0)\to (F_1\oplus M, 0)$ for $[f,0]$ which, as a germ at 0, is sc-diffeomorphic in the $M$-direction along $U_1$, such that $[h,0]|_{\mathcal{O}(U_1\oplus\{0\},0)}$ is a filling for $[f_1,0]|_{\mathcal{O}(U_1\oplus\{0\},0)}$ and under some strong bundle sc-diffeomorphism $[h-t,0]|_{\mathcal{O}(U_1\oplus\{0\},0)}$ is of the basic class (which implies that $[h-\tilde\pi^\ast t,0]$ is also of the basic class, where $\tilde\pi$ is defined in (2)).
\end{enumerate}

Note that $f|_{\{f\in E_1\}}=f_1$. An embedding of strong bundle $\Phi:\tilde E\to E$ covering a base embedding $\tilde B\to B$ is said to be an \textbf{embedding} from an sc-Fredholm section $\tilde f:\tilde B\to\tilde E$ to an sc-Fredholm $f:B\to E$ if $\Phi_\ast \tilde f$ is a dimensional reduction of $f$ as above. We now introduce the ep-groupoid version: A strong bundle ep-groupoid functor $\Psi: \hat E\to E$ is called a \textbf{Morita-refinement} if $\Psi$ is a local sc-diffeomorphism between the object spaces and between the morphism spaces, the induced bundle map $\underline{\Psi}$ between the orbit spaces is an sc-homeomorphism, and $\Psi$ induces isomorphisms between stabilizer groups (HWZ call it equivalence, see \cite{PolyfoldIII} 2.5, and a strong bundle version can be defined verbatim; and it is also the categorical equivalence if ep-groupoids are regarded as categories). An \textbf{embedding} between two sc-Fredholm sections in strong bundle ep-groupoids is a strong bundle functor between the strong bundle ep-groupoids such that the map between the object spaces is an embedding as defined above, the induced map between the orbit spaces is an sc-homeomorphism onto the image, and the induced maps between stabilizer groups are isomorphisms. An embedding that is open between the object spaces is called an \textbf{open embedding}. A strong bundle ep-groupoid functor is called a \textbf{chart-refinement} if it is a composition of a Morita-refinement followed by an open embedding (namely, it is a Morita-refinement except that the induced map between the orbit spaces is an open map mapping sc-homeomorphically onto the image). An sc-Fredholm $f:B\to E$ is said to be a \textbf{refinement} of an sc-Fredholm $f_2: B_2\to E_2$, if there exist a Morita-refinement from $f_1$ to $f_2$ and an embedding from $f_1$ to $f$. (Equivalently, $f$ is a \textbf{refinement} of $f_2$ if a Morita-refinement of the restriction $f_2|_U$ of $f_2$ to an invariant open neighborhood $U$ of $(f_2)^{-1}(0)$ in $B_2$ embeds into $f$.)
\end{definition}

\begin{definition}(\cite{DII})\label{PFDREQUIV} Two sc-Fredholm sections are \textbf{R-equivalent} as sc-Fredholm sections, if they have a common refinement. This is an equivalence relation, as we can use projections $\pi$ and $\Pi$ to do the roof construction. Two sc-Fredholm sections are \textbf{chart-equivalent} if they admit a common chart-refinement.
\end{definition}

\begin{remark}\label{KEYREMARKS} We note the following observations:
\begin{enumerate}
\item Polyfold perturbation theory in \cite{PolyfoldII} and \cite{PolyfoldIII} factors through the R-equivalence \ref{PFDREQUIV} as well as the chart-equivalence above.
\item \ref{PFDFDRED} is an example of (local finite) dimensional reduction as in \ref{SCFREDEMBEDDING}.
\item $\pi$ and $\Pi$ have to be included in the data for embedding, because in general there is no implicit function theorem in the polyfold setting to turn a map that is a homemorphism onto image and injective between tangent spaces into a submersion onto the image (but there is in the setting of \ref{PFDFDRED}). The complicated looking definition of embedding in \ref{SCFREDEMBEDDING} is also because $\pi$ and $\Pi$ have to adapt to the sc-Fredholmness notion. See the next item:
\item We motivate definition \ref{SCFREDEMBEDDING} a bit using \ref{PFDFDRED}. Suppose in the setting of \ref{PFDFDRED}, instead we have a classical smooth Fredholm section $f:B\to E$ in a Banach bundle $E\to B$ (Banach manifold $B$ is by definition second countable and paracompact). Suppose $L$ is a subbundle of $E|_W$ ($W$ open in $B$) which is \textbf{fiberwise closed} (namely, for every $y\in W$, $L|_y$ is a closed subspace of Banach $E|_y$; note that in \ref{PFDFDRED} $L|_y$ is finite dimensional, but for the following in the classical set-up, it is not necessary and being closed is enough), so the quotient bundle $E/L\to B$ and the quotient section $f/L: B\to E/L$ are well-defined. If $f/L$ is in good position, then by implicit function theorem, $V:=\{f\in L\}=(f/L)^{-1}(0)$ is a Banach submanifold of $B$; and $f|_{\{f\in L\}}: V\to L|_V$ is a classical Fredholm section which sits in $f|_W$, and has the same zero set and has the same kernels and cokernels at zeros as those of $f|_W$. $f|_{\{f\in L\}}$ is a dimensional reduction of $f$. Since the linearization $f'(x): T_xW\to E|_x$ at zero $x\in (f|_W)^{-1}(0)$ induces an isomorphism $T_xW/T_xV \to (E|_x)/(L|_x)$. By using a choice a Riemannian metric or otherwise, we have an open subset $W'\subset W$ around $(f|_W)^{-1}(0)$ and a submersion $\pi: W'\to V$ with disk-like fibers, and we can choose\footnote{By using a connection and a projection and shrinking $W'$ if necessary.} a smooth bundle map $\tilde\pi:\tilde L \to L|_V$ which is a fiberwise isomorphism and covers $\pi: W'\to V$, such that $(f|_{W'})/\tilde L$ is transverse. A perturbation $s$ to $f|_V$ (such that $f|_V+s:V\to L|_V$ is in good position) can be lifted/extended to a perturbation $\tilde s: W'\to \tilde L|_{W'}$ to $f|_{W'}$ via $y\mapsto \tilde \pi(y,\cdot)^{-1}(s(\pi(y)))$ such that $f|_{W'}+\tilde s$ is in good position automatically. Thus the Fredholm theory of $f|_{W'}$ and $f|_{\{f\in L\}}$ are really the same, and this is the basis that $f|_{\{f\in L\}}$ captures $f|_{W'}$ as far as Fredholm theory is concerned. Data $\Pi: E|_{W'}\to \tilde L$ can also be constructed. Therefore, in classic Fredholm setting, $\pi$ and $\Pi$ can be deduced using arguments rooted in implicit function theorem.
\item The following is a non-trivial example of dimensional reduction which will be used later:
\end{enumerate}
\end{remark}

\begin{lemma}\label{SUMMABILITYTRICK} (\cite{Dingyu}, \cite{DII}) If $f:B\to E$ is an sc-Fredholm section of a strong bundle ep-groupoid $\pi_E: E\to B$, then $$f^{(n)}: E^{\oplus n}\to ((\pi_E)^{\oplus n})^\ast(E\oplus E^{\oplus n}),$$ $$(x, v_1,\cdots, v_n)\mapsto ((x, v_1, \cdots, v_n), (f(x), v_1, \cdots, v_n))$$ is also an sc-Fredholm section. $f$ is a dimensional reduction of $f^{(n)}$ as in \ref{SCFREDEMBEDDING}, thus $f^{(n)}$ refines and hence is R-equivalent to $f$. Here $E^{\oplus n}:=E\oplus \cdots\oplus E$ is the Whitney sum of $n$-tuple copies of $E$ over $B$, so $(v_1,\cdots, v_n)$ above satisfies $\pi_E(v_1)=\cdots=\pi_E(v_n)$ in $B$. 
\end{lemma}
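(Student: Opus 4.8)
The plan is to realize $f^{(n)}$ as ``$f$ in the $B$-direction and the identity in the Whitney-sum direction'', to deduce sc-Fredholmness of $f^{(n)}$ by appending an identity to a local filling and basic-class normal form of $f$, and then to read the four conditions of Definition~\ref{SCFREDEMBEDDING} straight off these coordinates.

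First I would fix local models. Near $z\in B$ choose a strong bundle chart for $\pi_E\colon E\to B$ with base retract $O=r(U)$ in an sc-Banach space $A$, fiber sc-Banach space $F$, and linear fiber retractions $R_x\colon F\to F$ for $x\in O$. Write $M:=F^{\oplus n}$ for the single sc-Banach space carrying $(v_1,\dots,v_n)$. Then the total space $E^{\oplus n}$ is locally the base retract $\hat r(u,w):=(r(u),R_{r(u)}w)$ on an sc-open subset of $A\oplus M$, and the target bundle $((\pi_E)^{\oplus n})^\ast(E\oplus E^{\oplus n})$ is locally the strong bundle retract over it with fiber sc-Banach space $F\oplus M$ and retraction $(u,w;e_0,w')\mapsto(r(u),R_{r(u)}w;R_{r(u)}e_0,R_{r(u)}w')$; in these coordinates $f^{(n)}(u,w)=(f_{\mathrm{loc}}(u),w)$, where $f_{\mathrm{loc}}$ is the local representative of $f$. (If $B$ has corners, the $M$-directions are appended along the vector-space directions and nothing changes.) Since $f_{\mathrm{loc}}$ is sc-smooth and appending identities and projections preserves sc-smoothness, $f^{(n)}$ is sc-smooth; that $f^{(n)}$ is regularizing follows directly from $f$ being regularizing together with the compatibility of $\hat r$ with the fiber retractions.

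Next I would verify sc-Fredholmness. At an arbitrary smooth point $(x_0,v_0)\in E^{\oplus n}$, recentering base and fibers shows that $f^{(n)}-f^{(n)}(x_0,v_0)$ decouples into $f-f(x_0)$ along the $u$-direction and the identity along the $M$-direction, so it suffices to treat $z\in B$. Let $[h,0]$ be a filling germ for $[f,0]$ and $[\mathrm{gr}(t),0]$ an $sc^+$-section germ with $t(0)=h(0)=f(0)$ such that, under a strong bundle sc-diffeomorphism, $[h-t,0]$ has the basic-class form $(v,x)\mapsto(x-B(v,x),C(v,x))$ on $\mathcal{O}(\R^p\oplus W,0)\to(W\oplus\R^N,0)$. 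Put $\hat h(u,w):=(h(u),w)$ and $\hat t(u,w):=(t(u),0)$, so that $\hat t(0)=\hat h(0)=(f(0),0)$. Then $[\hat h,0]$ is a filling germ for $[f^{(n)},0]$ --- the axioms of \cite{Polyfoldanalysis} 1.35 persist under appending the identity, since adjoining an isomorphism alters neither the zero set nor the kernel and cokernel of the linearization --- and under the strong bundle sc-diffeomorphism that is the given one on the $F$-factor and the identity on the $M$-factor the germ $[\hat h-\hat t,0]$ becomes $(v,(x,m))\mapsto\bigl((x,m)-(B(v,x),0),\,C(v,x)\bigr)$ on $\mathcal{O}(\R^p\oplus(W\oplus M),0)\to((W\oplus M)\oplus\R^N,0)$, which is again of basic class: the finite-dimensional $C$-part is unchanged, and $(v,\cdot)\mapsto(B(v,\cdot),0)$ is still a contraction with arbitrarily small constant near $0$. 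Hence $f^{(n)}$ is sc-Fredholm, and since the $M$-directions carry the diagonal linear stabilizer action this can be arranged invariantly, so $f^{(n)}$ is an sc-Fredholm section in the strong bundle ep-groupoid obtained by letting each morphism act diagonally on $E^{\oplus n}$.

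Finally I would check the four conditions of Definition~\ref{SCFREDEMBEDDING} with $f_1=f$ and ambient section $f^{(n)}$: (1) $B\hookrightarrow E^{\oplus n}$ as the zero section, $E\hookrightarrow(E\oplus E^{\oplus n})|_B$ as the first summand, and $f^{(n)}|_B=f$; (2) the model above has the required shape $R((U_1\oplus M)\triangleleft(F_1\oplus M))$ with $U_1=U$, $F_1=F$, its restriction to $M=0$ is the strong bundle retraction of the chosen chart of $E$, the base retraction $\hat r$ factors as $\tilde r\circ\tilde\pi=\tilde\pi\circ\tilde r$ with $\tilde\pi(u,m)=(u,0)$ so that $B$ is a sub-M-polyfold of $E^{\oplus n}$, and one takes $\pi\colon E^{\oplus n}\to B$ the bundle projection (defined on all of $E^{\oplus n}$) and $\Pi$ the first-summand projection of the target bundle, a fiberwise strong projection covering $\pi$; (3) $(f^{(n)})^{-1}(0)=\{f(x)=0,\ v_1=\dots=v_n=0\}=f^{-1}(0)$, so this is even a global, not merely local, dimensional reduction; (4) the filling $[\hat h,0]$ is the identity, hence sc-diffeomorphic, in the $M$-direction along $U_1$, its restriction to $M=0$ is the filling $[h,0]$ of $[f,0]$, $[\hat h-\hat t,0]|_{M=0}=[h-t,0]$ is of basic class, and $[\hat h-\tilde\pi^\ast\hat t,0]$ is of basic class as above; moreover $\{f^{(n)}\in E\}=B$ with $f^{(n)}|_{\{f^{(n)}\in E\}}=f$. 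The inclusions of (1)--(2) together with $\pi,\Pi$ form an embedding of sc-Fredholm sections in strong bundle ep-groupoids $f\hookrightarrow f^{(n)}$, equivariant for the diagonal actions; hence $f^{(n)}$ is a refinement of $f$ with trivial Morita part, and being its own refinement it is a common refinement of $f$ and $f^{(n)}$, so the two are R-equivalent by Definition~\ref{PFDREQUIV}. I expect the only genuine obstacle to be the bookkeeping --- verifying that $\hat h=h\oplus\mathrm{id}_M$ really satisfies the filling axioms of \cite{Polyfoldanalysis} 1.35 and that the basic-class normal form survives, all compatibly with the retract and double-filtration structure of $E^{\oplus n}$; it works because the only new ingredient is an identity in the (possibly infinite-dimensional) $M$-directions, contributing a trivial contraction and leaving the finite-dimensional cokernel direction untouched.
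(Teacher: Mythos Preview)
The paper does not prove this lemma; it is stated with citation to \cite{Dingyu} and \cite{DII} and used as a black box. There is therefore no in-paper proof to compare against.

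That said, your argument is correct and is the natural one: the section $f^{(n)}$ is $f$ in the $B$-direction and the identity in the $M=F^{\oplus n}$-direction, so a filling-plus-basic-class germ $(h,t)$ for $f$ extends to $(\hat h,\hat t)=(h\oplus\mathrm{id}_M,\,t\oplus 0)$ for $f^{(n)}$, and appending a zero contraction in the $M$-slot preserves the basic-class form while leaving the finite-dimensional $C$-part untouched. Your recentering step at a general smooth point $(x_0,v_0)$ is the right way to reduce to $v_0=0$, since translation in the fiber is an sc-diffeomorphism of the base $E^{\oplus n}$ (using the $E[0]$ filtration). The verification of the four items in Definition~\ref{SCFREDEMBEDDING} is then immediate from the product form of the local model, and the refinement/R-equivalence conclusion follows as you say.

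The only places that deserve a line more of care are the filling axioms: for item (ii) of \cite{Polyfoldanalysis} 1.35 you should note that $\hat h(u,w)=(h(u),w)$ lying in the image of the target retraction forces both $h(u)\in R_{r(u)}F$ \emph{and} $w=R_{r(u)}w$, so that (ii) for $h$ gives $u\in O$ and then the second condition gives $(u,w)\in\hat O$; and for item (iii) the linearization of $(\mathrm{id}-\hat R)\circ\hat h$ on $\ker D\hat r(0)$ splits as the corresponding map for $h$ on $\ker Dr(0)$ direct-summed with $\mathrm{id}-R_0$ on the $M$-slot, which is again an isomorphism onto the complementary kernel. These are exactly the ``bookkeeping'' points you flag, and they go through without incident.
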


\begin{example}\label{DIMREDEX} Let $f: B\to E$ be an sc-Fredholm section with a compact zero set $f^{-1}(0)$, where $B$ is without boundary. We can choose finitely many local $sc^+$-sections $s_i, i=1,\cdots, N$ such that $f(y)+\sum_{i=1}^N a_i s_i(y):B\times B^{\R^N}_\epsilon(0)\to pr_1^\ast E$ is in good position (same as transverse here). Denote $\mathfrak{f}: B\times B^{\R^N}_\epsilon(0)\to pr_1^\ast E\oplus pr_2^\ast \R^N, (y,a_i)\mapsto (f(y)+\sum_{i=1}^N a_i s_i(y), a_i)$. Both $f=\mathfrak{f}|_{\{\mathfrak{f}\in pr_1^\ast E\}}$ and $$pr_2|_{\{(y,a_i)\;|\;f(y)+\sum_{i=1}^N a_i s_i(y)=0\}}=\mathfrak{f}|_{\{\mathfrak{f}\in pr_2^\ast \R^N\}}$$ embed into $\mathfrak{f}$ (with submersions and projections already given or easily constructible), and thus are R-equivalent. A generic point $(a_i)$ in $B^{\R^N}_\epsilon(0)\subset \R^N$ will give a transverse perturbation $f+\sum_{i=1}^N a_i s_i$ of $f$, and classical analogue of this is just the derivation of Sard-Smale theorem from Sard theorem.
\end{example}

Note that for the group action version of \ref{DIMREDEX}, $\mathfrak{f}$ will become a multisection.

If not using local $sc^+$ sections constructed usually as constant sections in some trivializations multiplied with bump functions as in \ref{DIMREDEX}, we can achieve a local \textbf{thickening}, or \textbf{stabilization}, or \textbf{continuous family of perturbation} (which is invariant and single-valued, but only local):

\begin{proposition}\label{STABILIZATION} (\cite{DII} Let $f: B\to E$ be an sc-Fredholm section in a strong bundle ep-groupoid. $L_x$ is an invariant fiber-standard subbundle of $E|_{W_x}$ (e.g. constructed in \ref{PFDFDRED}) to which $f$ is in good position as in \ref{IGP}. By shrinking $W_x$, we can assume $L_x$ is trivial (the method of the construction in \ref{PFDFDRED} will produce a trivial $L_x$ anyway), so $\psi_x:W_x\times N_x\overset{\cong}{\to} L_x$ for some finite dimensional $G_x$-invariant vector subspace $N_x$ in $(E|_x)_\infty=(E_\infty)|_x$. Then $\mathbf{f}^{L_x}: W_x\times N_x\to pr_1^\ast (E|_{W_x}), (y,v)\mapsto f(y)+\psi_x(y,v)$ is sc-Fredholm and in good position. $pr_2|_{(\mathbf{f}^{L_x})^{-1}(0)}=\mathfrak{f}^{L_x}|_{\{\mathfrak{f}^{L_x}\in pr_2^\ast N_x\}}$ is a finite dimensional reduction of $\mathfrak{f}^{L_x}:=(\mathbf{f}^{L_x},pr_2): W_x\times N_x\to pr_1^\ast (E|_{W_x})\oplus pr_2^\ast N_x, (y,v)\mapsto (\mathbf{f}^{L_x}(y,v),v)$ (here $\mathfrak{f}^{L_x}$ refines $f$).
\end{proposition}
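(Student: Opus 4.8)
The plan is to realize $\mathbf{f}^{L_x}$ as an $sc^+$-perturbation of the pullback $pr_1^\ast f$ on $W_x\times N_x$, and then to read the finite dimensional reduction off of \ref{QUOTIENTTHEORY} and \ref{PFDFDRED}; this parallels the mechanism behind \ref{DIMREDEX}, with the continuous family $\psi_x(y,\cdot)$ playing the role of a finite span $\sum a_is_i$. Since $\psi_x$ may be chosen $G_x$-equivariant, everything below is $G_x$-equivariant and takes place in $G_x$-translation strong bundle ep-groupoids. For the sc-Fredholm assertion: the pullback $pr_1^\ast f:W_x\times N_x\to pr_1^\ast(E|_{W_x})$ is sc-Fredholm, since pulling back along the submersion $pr_1$ with boundaryless finite dimensional fibre $N_x$ merely tensors the local tame retracts, fillings and basic-class germs of $f$ by the constant sc-factor $N_x$, raising the index by $\dim N_x$. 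Because $L_x$ is fibre-standard — its local models $R(U\triangleleft N)$ have $N_m=N_0$ finite dimensional with $N_0\subset F_\infty$ — every sc-smooth section of $L_x$, viewed inside $E|_{W_x}$, is an $sc^+$ section; in particular $(y,v)\mapsto\psi_x(y,v)$ is an $sc^+$ section of $pr_1^\ast(E|_{W_x})$. As adding an $sc^+$ section keeps a section sc-Fredholm (this is absorbed into the very definition of sc-Fredholmness, cf.\ \cite{PolyfoldII}), $\mathbf{f}^{L_x}=pr_1^\ast f+\psi_x$ is sc-Fredholm of index $\mathrm{ind}\,f+\dim N_x$. Likewise $\mathfrak{f}^{L_x}=(\mathbf{f}^{L_x},pr_2)$ is sc-Fredholm of index $\mathrm{ind}\,f$: in the induced charts it is $\mathbf{f}^{L_x}$ with the tautological section $(y,v)\mapsto v$ of $pr_2^\ast N_x$ adjoined, which only appends a trivial $v$-component to the filling, preserves the basic-class form, and drops the index back by $\dim N_x$.

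For the good-position assertion, work near a zero $(x_0,v_0)$ of $\mathbf{f}^{L_x}$, so that $f(x_0)=-\psi_x(x_0,v_0)\in(L_x)|_{x_0}$. Choose a chart, an $sc^+$ section $t$ and a filling $h$ in which $f$ realizes good position to $L_x$ at $x_0$, i.e.\ $\mathrm{span}(\mathrm{im}\,h'(0),\widetilde{(L_x)|_{x_0}})=F$ together with the partial-quadrant conditions (2)--(3) of \ref{IGPPOINTWISE}. In the induced chart $r(U)\times N_x$ a filling of $\mathbf{f}^{L_x}$ is $\hat h(u,v)=h(u)+\Psi(u)v$, where $v\mapsto\Psi(u)v$ is the description of $\psi_x$ (sc-smooth in $u$, a linear isomorphism onto $\widetilde{(L_x)|_u}$); differentiating in $u$ recovers $\mathrm{im}\,h'(0)$ and differentiating in $v$ yields $\Psi(x_0)(N_x)=\widetilde{(L_x)|_{x_0}}$, so the linearization of $\hat h$ at the zero surjects onto $F$. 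The extra factor $N_x$ carries no boundary, so the partial quadrant becomes $C\times N_x$ with faces inherited from $C$ alone, and conditions (2)--(3) for $\mathbf{f}^{L_x}$ follow from those for $f$; hence $\mathbf{f}^{L_x}$ is in good position (and in general position if $f$ is in general position to $L_x$).

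For the reduction: $pr_2^\ast N_x$ is a fibre-standard subbundle of $pr_1^\ast(E|_{W_x})\oplus pr_2^\ast N_x$, its quotient bundle is $pr_1^\ast(E|_{W_x})$, and the quotient section is $\mathfrak{f}^{L_x}/pr_2^\ast N_x=\mathbf{f}^{L_x}$, which is in good position; so by the equivalence in \ref{QUOTIENTTHEORY} the sc-Fredholm section $\mathfrak{f}^{L_x}$ is in good position to $pr_2^\ast N_x$, and \ref{PFDFDRED} applies verbatim and exhibits $\mathfrak{f}^{L_x}|_{\{\mathfrak{f}^{L_x}\in pr_2^\ast N_x\}}$ — which unwinds to $pr_2|_{(\mathbf{f}^{L_x})^{-1}(0)}$ over $(\mathbf{f}^{L_x})^{-1}(0)=\{(y,v)\mid f(y)+\psi_x(y,v)=0\}$ — as a (local) finite dimensional reduction of $\mathfrak{f}^{L_x}$. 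Finally, $\mathfrak{f}^{L_x}$ refines $f|_{W_x}$: the zero-section inclusion $W_x\hookrightarrow W_x\times N_x$, $y\mapsto(y,0)$, covered by $E|_{W_x}\hookrightarrow pr_1^\ast(E|_{W_x})\oplus pr_2^\ast N_x$, $e\mapsto(e,0)$, satisfies $\mathfrak{f}^{L_x}(y,0)=(f(y),0)$, matches the zero sets, splits off the boundaryless $N_x$-factor in the product form of \ref{SCFREDEMBEDDING}(2), and comes equipped with the submersion $\pi=pr_1$ and the bundle projection $\Pi=$ projection onto the first summand, hence is an embedding exhibiting $f|_{W_x}$ as a dimensional reduction of $\mathfrak{f}^{L_x}$.

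The two points that genuinely need care are the claim that a section valued in a fibre-standard subbundle is an $sc^+$ section of the ambient bundle — which is where the finite dimensionality and smoothness ($N_0\subset F_\infty$) of its fibres enter, and exactly what upgrades the classical construction in \ref{DIMREDEX} to a continuous family — and the verification that adjoining the boundaryless parameter $N_x$ preserves both the contraction estimate defining the basic class and the partial-quadrant conditions (2)--(3) of \ref{IGPPOINTWISE} in the presence of corners. Granting \ref{QUOTIENTTHEORY} and \ref{PFDFDRED}, the remainder is bookkeeping.
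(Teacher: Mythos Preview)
The paper does not actually prove this proposition; it is stated with a citation to \cite{DII} and no argument is given in the present text. So there is nothing to compare your proposal against line by line. That said, your argument is the natural one in the framework the paper has set up, and it is essentially correct: it leans on precisely the results the paper has made available (\ref{QUOTIENTTHEORY}, \ref{PFDFDRED}, \ref{SCFREDEMBEDDING}), and it mirrors the mechanism of \ref{DIMREDEX} and \ref{SUMMABILITYTRICK} as the paper itself suggests just before and after the proposition.

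One point worth sharpening is the good-position verification. At a zero $(x_0,v_0)$ with $v_0\neq 0$, the linearization of your filling $\hat h(u,v)=h(u)+\Psi(u)v$ picks up an extra term $(\partial_u\Psi)(0)(u')v_0$ in the $u'$-direction, which you silently dropped. This term takes values in $\widetilde{L_x}$ (since $\Psi$ maps into $L_x$), so modulo $\widetilde{L_x}$ the $u'$-image is still $\mathrm{im}\,h'(0)$, and surjectivity follows as you claim. More to the point, the projection $(u',v')\mapsto u'$ identifies $\ker\hat h'(0)$ with $(h'(0))^{-1}(\widetilde{L_x})$ (the $v'$-component is then uniquely determined since $\Psi(x_0)$ is an isomorphism onto the fibre), and it is through this identification that the partial-quadrant conditions (2)--(3) of \ref{IGPPOINTWISE} for $\mathbf{f}^{L_x}$ in $C\times N_x$ reduce to those for $f$ in good position to $L_x$ in $C$. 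Your sentence ``the extra factor $N_x$ carries no boundary, so \ldots\ conditions (2)--(3) follow'' is correct in spirit but hides this identification; making it explicit would close the argument cleanly.
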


The above two pictures in \ref{PFDFDRED} and \ref{STABILIZATION} are fairly standard for classical Fredholm sections with finite dimensional $L$ or for concrete $L$, although the relationship between these two pictures explained in proposition \ref{SAMEFDRED} (1) below is not usually pointed out.

\begin{corollary}\label{SAMEFDRED} (\cite{DII})
\begin{enumerate}
\item For the setting in \ref{STABILIZATION}, if $f|_{W_x}$ is viewed as $\mathfrak{f}^{L_x}|_{W_x\times \{0\}}:W_x\times\{0\}\to pr_1^\ast (E|_{W_x})$ and $L_x$ is viewed as $pr_1^\ast L_x$, $pr_2|_{(\mathbf{f}^{L_x})^{-1}(0)}$ agrees with another finite dimensional reduction $f|_{\{f\in L_x\}}$ (of $f=\mathfrak{f}^{L_x}|_{\{\mathfrak{f}^{L_x}\in pr_1^\ast (E|_{W_x})\}}$, thus also a finite dimensional reduction of $\mathfrak{f}^{L_x}$). (In particular, not only $f|_{\{f\in L_x\}}$ and $pr_2|_{(\mathbf{f}^{L_x})^{-1}(0)}$ are R-equivalent, but they actually are the same under this implicit identification.)
\item Let $L_x\to W_x$ and trivial $L'_x\to W'_x$ be any two such $G_x$-invariant fiber-standard subbundles to which $f$ is in good position. Choose a $G$-invariant $W''_x\subset W_x\cap W'_x$. Then $f|_{\{f\in L|_{W''_x}\}}$ is R-equivalent to $pr_2|_{(\mathbf{f}^{L'}|_{W''_x})^{-1}(0)}$ as $G_x$-equivariant (sc-Fredholm) sections.
\end{enumerate}
\end{corollary}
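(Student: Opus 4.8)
The plan is to read off both parts from explicit identities for the doubled-up section $\mathfrak{f}^{L_x}$ of \ref{STABILIZATION}, using only \ref{PFDFDRED}, \ref{QUOTIENTTHEORY} and the bookkeeping of \ref{SCFREDEMBEDDING} and \ref{PFDREQUIV}; no new analysis enters. For (1) I would write $\mathfrak{f}^{L_x}\colon W_x\times N_x\to pr_1^\ast(E|_{W_x})\oplus pr_2^\ast N_x$, $(y,v)\mapsto(f(y)+\psi_x(y,v),v)$, and make two elementary observations. Cutting $\mathfrak{f}^{L_x}$ down to the fiber-standard subbundle $pr_2^\ast N_x$ forces $f(y)+\psi_x(y,v)=0$ and leaves the section $(y,v)\mapsto v$, i.e.\ $pr_2|_{(\mathbf{f}^{L_x})^{-1}(0)}$, which is the identity already recorded in \ref{STABILIZATION}; cutting $\mathfrak{f}^{L_x}$ down to $pr_1^\ast L_x\subset pr_1^\ast(E|_{W_x})$ forces $v=0$ and $f(y)\in L_x|_y$ and leaves $y\mapsto f(y)\in L_x|_y$ over $V_x=\{f\in L_x\}$, which is $f|_{\{f\in L_x\}}$ — and since $f|_{W_x}=\mathfrak{f}^{L_x}|_{W_x\times\{0\}}$ is itself a dimensional reduction of $\mathfrak{f}^{L_x}$ in the sense of \ref{SCFREDEMBEDDING}, transitivity of dimensional reductions (as already used in the statement) together with \ref{PFDFDRED} makes $f|_{\{f\in L_x\}}$ a local finite dimensional reduction of $\mathfrak{f}^{L_x}$. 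Finally I would identify the two reductions via the $G_x$-equivariant sc-diffeomorphism $\Theta\colon V_x\to(\mathbf{f}^{L_x})^{-1}(0)$, $y\mapsto(y,\psi_x(y,\cdot)^{-1}(-f(y)))$, with inverse $pr_1$ (this uses that $f|_{V_x}$ is sc-smooth by \ref{PFDFDRED} and that $\psi_x$ is a strong bundle sc-diffeomorphism): under $\Theta$ and the trivialization $\psi_x$ the section $f|_{\{f\in L_x\}}$ becomes $pr_2|_{(\mathbf{f}^{L_x})^{-1}(0)}$ up to the harmless sign $v\mapsto-v$ on $N_x$, which one absorbs into $\psi_x$ or into the identification.

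For (2) I would first apply (1) to the trivial bundle $L'_x$ over $W''_x$ to identify $pr_2|_{(\mathbf{f}^{L'}|_{W''_x})^{-1}(0)}$ with $f|_{\{f\in L'|_{W''_x}\}}$, so that it suffices to exhibit a common refinement of $f|_{\{f\in L|_{W''_x}\}}$ and $f|_{\{f\in L'|_{W''_x}\}}$. Both are local finite dimensional reductions of $f|_{W''_x}$ by \ref{PFDFDRED}, and in fact genuine dimensional reductions since each has zero set exactly $f^{-1}(0)\cap W''_x$; hence by \ref{SCFREDEMBEDDING} the section $f|_{W''_x}$ refines each of them, the embeddings and their maps $\pi$ and $\Pi$ being supplied by \ref{PFDFDRED}, and all of this stays $G_x$-equivariant because $W_x,W'_x,W''_x,L_x,L'_x,N_x,N'_x,\psi_x,\psi'_x$ are (or may be chosen) $G_x$-invariant. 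So $f|_{W''_x}$ is a common refinement and the two sections are R-equivalent as $G_x$-equivariant sc-Fredholm sections by \ref{PFDREQUIV} — exactly the pattern of Example \ref{DIMREDEX}, where two dimensional reductions of a single section both embed into it and are therefore R-equivalent.

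The work, and the only genuine obstacle, is organizational rather than conceptual: checking that $\mathfrak{f}^{L_x}$ is in good position to $pr_1^\ast L_x$ (which via \ref{QUOTIENTTHEORY} unwinds precisely to the hypothesis that $f|_{W_x}$ is in good position to $L_x$), and verifying that the full data demanded by \ref{SCFREDEMBEDDING} — the submersion/projection pair $\pi$, $\Pi$, the $sc^+$-sections $t$, the fillings and the basic-class normal forms — restrict correctly to the various subbundles and sub-M-polyfolds, all while carrying $G_x$-equivariance through. These verifications are already implicit in \ref{PFDFDRED} and \ref{STABILIZATION}; granting them, (1) and (2) are the two short bookkeeping arguments above.
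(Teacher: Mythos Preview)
Your argument is correct and matches the route the paper implicitly intends: the paper gives no explicit proof here (it defers to \cite{DII}), but the corollary is set up precisely so that it falls out of \ref{PFDFDRED}, \ref{STABILIZATION}, \ref{SCFREDEMBEDDING} and \ref{PFDREQUIV} in exactly the way you describe. For (1), your explicit $G_x$-equivariant identification $\Theta\colon V_x\to(\mathbf{f}^{L_x})^{-1}(0)$, $y\mapsto(y,\psi_x(y,\cdot)^{-1}(-f(y)))$ with inverse $pr_1$ is the concrete content of the statement ``agrees under this implicit identification'', and the sign $v\mapsto -v$ is indeed harmless (absorbed into the bundle identification $\psi_x$). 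For (2), the observation that $f|_{W''_x}$ is a common refinement of both finite-dimensional reductions is exactly the pattern flagged in \ref{KEYREMARKS}(2) and \ref{DIMREDEX}; your reduction of the claim to this via (1) applied to $L'_x$ is the intended shortcut. The caveat you flag at the end---that the full embedding data of \ref{SCFREDEMBEDDING} (fillings, $sc^+$-sections, basic-class form, $\pi$, $\Pi$) must be checked---is real but, as you say, is precisely what \ref{PFDFDRED} and \ref{KEYREMARKS}(2) assert.
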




Observe the useful fact: 

\begin{lemma}\label{INDDOESNOTM} (\cite{DII})  If $f:B\to E$ is an sc-Fredholm section of a strong bundle ep-groupoid $\pi_E: E\to B$, and suppose that invariant fiber-standard trivial subbundles $L_i=\psi_i(W_i\times N_i), 1\leq i \leq n$ are chosen around $x$ such that $f$ is in good position to $L_i\to W_i$ for all $1\leq i\leq n$ (e.g. arisen as in \ref{PFDFDRED}), and we do not assume any relationship among $L_i$ at all (they can be independent, or dependent over some region in an uncontrolled way, etc), then $$f^{\oplus_{i=1}^n N_i}: (\cap_{i=1}^n W_i)\times (\oplus_{i=1}^n N_i)\to E|_{\cap_{i=1}^n W_i},$$ $$(y,v_1,\cdots, v_n)\mapsto f(y)+\psi_1(y,v_1)+\cdots+\psi_n(y,v_n)$$ is R-equivalent to $f|_{\cap_{i=1}^n W_i}$.
\end{lemma}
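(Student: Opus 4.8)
The plan is to reduce the statement to an iterated application of the single-summand stabilization result, Proposition~\ref{STABILIZATION}, together with the summability/Whitney-sum trick of Lemma~\ref{SUMMABILITYTRICK}, so that no new transversality or implicit-function-type argument is needed. The key conceptual point is that R-equivalence is an equivalence relation (Definition~\ref{PFDREQUIV}), so it suffices to exhibit, for each $i$, an explicit chain of dimensional reductions and refinements connecting $f|_{\cap_j W_j}$ to the section obtained by adding the first $i$ summands $\psi_1(y,v_1)+\cdots+\psi_i(y,v_i)$, and then to compose these chains.

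First I would set $W:=\cap_{i=1}^n W_i$ and work entirely over this common invariant sc-open set; by shrinking I may assume each $L_i$ is trivial, $L_i=\psi_i(W\times N_i)$, and that $f$ restricted to $W$ is in good position to each $L_i\to W$ in the sense of Definition~\ref{IGP} (good position is an open, pointwise condition, so restriction is harmless). The base case $n=1$ is exactly the content of Proposition~\ref{STABILIZATION}: the thickened section $\mathfrak{f}^{L_1}=(\mathbf{f}^{L_1},pr_2)$ refines $f$, and $f$ is itself recovered as the dimensional reduction $\mathfrak{f}^{L_1}|_{\{\mathfrak{f}^{L_1}\in pr_1^\ast(E|_W)\}}$, so $f^{\oplus N_1}=\mathbf{f}^{L_1}$ (up to the implicit identification of Corollary~\ref{SAMEFDRED}(1)) is R-equivalent to $f$. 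For the inductive step, suppose $f^{\oplus_{i=1}^{k} N_i}:(W)\times(\oplus_{i=1}^k N_i)\to E|_W$, $(y,\mathbf{v})\mapsto f(y)+\sum_{i\le k}\psi_i(y,v_i)$, has already been shown R-equivalent to $f|_W$, and in particular is itself an sc-Fredholm section of a strong bundle ep-groupoid (being a dimensional reduction of a refinement of $f$). The subbundle $L_{k+1}\to W$ pulls back along the projection $(y,\mathbf{v})\mapsto y$ to an invariant fiber-standard trivial subbundle $\tilde L_{k+1}$ of $pr^\ast(E|_W)$ over $W\times(\oplus_{i\le k}N_i)$, with the same fiber $N_{k+1}$; since adding the already-present $sc^+$-perturbations $\sum_{i\le k}\psi_i(y,v_i)$ does not affect the linearization of the filling in the $N_{k+1}$-direction, $f^{\oplus_{i=1}^k N_i}$ is in good position to $\tilde L_{k+1}$ exactly because $f$ is in good position to $L_{k+1}$. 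Applying Proposition~\ref{STABILIZATION} to the sc-Fredholm section $f^{\oplus_{i=1}^k N_i}$ and the subbundle $\tilde L_{k+1}$ produces $(f^{\oplus_{i=1}^k N_i})^{\oplus N_{k+1}}=f^{\oplus_{i=1}^{k+1}N_i}$, which is R-equivalent to $f^{\oplus_{i=1}^k N_i}$, hence by transitivity to $f|_W$. This closes the induction.

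The main thing to be careful about — and what I expect to be the only real obstacle — is verifying that $f^{\oplus_{i=1}^k N_i}$ is genuinely in good position to the pulled-back bundle $\tilde L_{k+1}$, i.e. that the earlier summands $\psi_1,\dots,\psi_k$ (which are added with no assumed compatibility with $L_{k+1}$) do not destroy the three linearized conditions in Definition~\ref{IGPPOINTWISE}. The point to exploit is that each $\psi_i(y,v_i)$ is (locally, after filling) the graph part of an $sc^+$-section in the new variables: an $sc^+$-perturbation shifts the filling by something that is a compact operator on each level, so it is absorbed precisely into the ``$sc^+$ section $t$'' allowed in the definition of good position, leaving $h'(0)$ in the $(y,v_{k+1})$-directions unchanged modulo the tangent directions of $W\times(\oplus_{i\le k}N_i)$ that are already in the domain. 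This is the same mechanism already used in Lemma~\ref{SUMMABILITYTRICK} and Proposition~\ref{STABILIZATION}, so it should go through routinely; I would phrase it once carefully and then invoke it at each step. Finally, all constructions are $G_x$-equivariant because each $L_i$, $N_i$, $W_i$ and the maps $\psi_i$ were chosen invariant, so the R-equivalences produced are $G_x$-equivariant and the statement holds in the ep-groupoid setting as asserted.
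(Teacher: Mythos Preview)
Your inductive approach is sound at the level of rigor appropriate here, but it is \emph{not} the route the paper indicates. The paper does not give a proof in the body; it defers to \cite{DII} and, in the remark immediately following Lemma~\ref{INDDOESNOTM}, sketches a one-shot argument: pass from $f$ to $f^{(n)}$ via Lemma~\ref{SUMMABILITYTRICK} (which already gives R-equivalence to $f$), embed each $L_i$ as the diagonal subbundle $\mathbf{L}_i$ in the first and $i$-th summands of $\mathbf{E}$ exactly as in Construction~\ref{FORGETFULCONSTRUCTION}(ii), observe that the $\mathbf{L}_i$ are now automatically independent regardless of how the original $L_i$ sit inside $E$, and then check ``after staring'' that the finite dimensional reduction $f^{(n)}|_{\{f^{(n)}\in\oplus_i\mathbf{L}_i\}}$ coincides on the nose with $pr_2|_{(f^{\oplus_i N_i})^{-1}(0)}$ (cf.\ Remark~\ref{AMAZINGLYTHESAME} and Corollary~\ref{SAMEFDRED}). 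So the paper's argument is: enlarge once to force independence, then reduce once.

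The comparison: your induction stays in the original bundle $E$ and repeatedly invokes Proposition~\ref{STABILIZATION}, paying at each step the price of re-verifying good position of the partially stabilized section to the next $\tilde L_{k+1}$. You correctly identify this as the only real obstacle and your $sc^+$-absorption argument is the right mechanism, though you should note that the linearization in the already-added $N_1,\dots,N_k$ directions lands in $L_1,\dots,L_k$, which may overlap $L_{k+1}$; this does not hurt condition~(1) of Definition~\ref{IGPPOINTWISE} (more image can only help span $F$ modulo $\tilde N_{k+1}$), and conditions~(2)--(3) are unaffected since the added $\oplus_{i\le k}N_i$ factor introduces no new corners. The paper's approach sidesteps this verification entirely by making the bundles independent in the enlarged space, and has the conceptual payoff of explaining \emph{why} independence is irrelevant: one can always manufacture it by passing to $f^{(n)}$. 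Your approach has the minor advantage of not leaving the original bundle, but requires $n-1$ invocations of transitivity of R-equivalence rather than a single direct identification.
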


\begin{remark} \ref{INDDOESNOTM} is directly identified with a finite dimensional reduction of \ref{SUMMABILITYTRICK} relative to the direct sum of then independent corresponding subbundles $\mathbf{L}_i$ defined exactly as $\mathbf{L}_{x_i}$ in construction \ref{FORGETFULCONSTRUCTION} (ii) (separated to be independent using the extra room in the added direction in $f^{(n)}$ in \ref{SUMMABILITYTRICK}), after staring. See remark \ref{AMAZINGLYTHESAME}. \ref{INDDOESNOTM} can be applied to various intersection in a cover; and it abstractly underpins the observation that using stabilization, independent $L_i$'s and general $L_i$'s will give rise to R-equivalent (sc-)Fredholm problems. 
\end{remark}

During a discussion with Shaofeng Wang \cite{SHAOFENG}, I found a nice application of the above circle of ideas to simplify one of his arguments in further developing the theory of Lu-Tian's virtual Euler class \cite{LuTian}. I phrase $f$ as sc-Fredholm below although it was originally classical Fredholm in his setting (in that setting, the proof below simplifies significantly, but below also shows how to work with hybrid of sc-Fredholmness and classical Fredholmness without changing definitions, and it is not needed later).

\begin{corollary} (perturbing using infinitely many sections) If $f:B\to E$ is an sc-Fredholm section without boundary and group action for simplicity, and $f^{-1}(0)$ is not compact and suppose that there is a countable locally finite collection of compactly supported $sc^+$-sections $s_i, i\in\mathbb{N}$ such that $$\text{span}(\text{im}(f'(z)),s_i(z), i\in\mathbb{N})=E|_z$$ for all $z\in f^{-1}(0)$ (note that only finitely many $s_i(z)$ are non-zero by assumption, but the number of non-zero $s_i(z)$ as a function of $z$ is not bounded in general). Define $l^2:=\{(y_i)_{i\in\mathbb{N}}\;|\;y_i\in\R,\;\sum_i|y_i|^2<\infty\}$, and $\mathbf{f}: B\times l^2\to E$, $(x,(y_i)_{i\in\mathbb{N}})\mapsto f(x)+\sum_{i\in\mathbb{N}} y_i s_i(x)$. Then $\mathbf{f}^{-1}(0)\cap (B\times B^{l^2}_\epsilon(0))$ is a Banach manifold where $B^{l^2}_\epsilon(0)$ is a ball centered at $0$ of radius of $\epsilon$ in $l^2$ for some $\epsilon>0$, and $pr_2: \mathbf{f}^{-1}(0)\cap (B\times B^{l^2}_\epsilon(0))\to B^{l_2}_\epsilon(0)\subset l^2$ is classical Fredholm, and a regular value $(y_i)_{i\in\mathbb{N}}$ in $B^{l^2}_\epsilon(0)$ corresponds to a transverse perturbed section $f+\sum_iy_i s_i$.
\end{corollary}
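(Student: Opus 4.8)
The plan is to run a parametric-transversality (Sard--Smale) argument with the separable Hilbert space $l^2$ as the parameter space, reducing everything --- locally on $\mathcal{M}:=\mathbf{f}^{-1}(0)\cap(B\times B^{l^2}_\epsilon(0))$ --- to a finite-dimensional stabilization of the type in \ref{PFDFDRED} and \ref{STABILIZATION}; what makes this work is that, by local finiteness of $\{s_i\}$, near any given point $\mathbf{f}$ involves only finitely many of the $s_i$, hence depends on only finitely many $l^2$-coordinates. First I would normalize the data. The span condition is unaffected by rescaling each $s_i$ by a nonzero constant or by multiplying $s_i$ with a cutoff that is $1$ near $f^{-1}(0)\cap\text{supp}(s_i)$; so I may assume $\text{supp}(s_i)\subset\mathcal{U}$ for a fixed open neighbourhood $\mathcal{U}$ of $f^{-1}(0)$ on which $\text{span}(\text{im}\,f'(x),\,s_i(x):i)=E|_x$ holds (such a $\mathcal{U}$ exists since surjectivity of the relevant Fredholm operators is open and $\{s_i\}$ is locally finite), and that the $l^2$-perturbation $y\mapsto\sum_i y_i s_i(\cdot)$ together with its base-linearization has $C^1$-size bounded uniformly on $\mathcal{U}$ by $\|y\|_{l^2}$. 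Then $\mathbf{f}(x,y)=f(x)+\sum_i y_i s_i(x)$ is well defined (a finite sum at each point), regularizing, and sc-smooth --- locally a finite sum of sc-smooth maps precomposed with bounded coordinate projections $l^2\to\mathbb{R}^{\text{fin}}$, with the $l^2$-factor given the constant sc-structure of \ref{SIMPLEYETSIG}~(1) (in the classical-Fredholm reading this factor is just an ordinary Banach parameter).

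First I would show $\mathcal{M}$ is a separable Banach manifold. If $x\notin\mathcal{U}$ then all $s_i(x)=0$, so $\mathbf{f}(x,y)=0$ forces $x\in f^{-1}(0)\subset\mathcal{U}$; hence $\mathcal{M}\subset\mathcal{U}\times B^{l^2}_\epsilon(0)$. Near $(x,y)\in\mathcal{M}$ only the finite set $I$ of indices with $x\in\text{supp}(s_i)$ matters, and there $\mathbf{f}$ equals the finite stabilization $\mathbf{f}_I\colon B\times\mathbb{R}^I\to pr_1^\ast E$ and is independent of the remaining $l^2$-coordinates. For $\epsilon$ small, $f'(x)+\sum_j y_j s_j'(x)$ is a small (in the $sc^+$ case $sc^+$) perturbation of $f'(x)$, so it is Fredholm of index $\text{ind}(f)$ and its image together with $\text{span}(s_i(x):i\in I)$ still spans $E|_x$; hence $\mathbf{f}_I$ is an sc-Fredholm section in good position of index $\text{ind}(f)+|I|$, and by \ref{PFDFDRED}, \ref{STABILIZATION} and \cite{PolyfoldII}, \cite{PolyfoldIII} its zero set is a smooth manifold of that dimension. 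Therefore $\mathcal{M}$ near $(x,y)$ is $\mathbf{f}_I^{-1}(0)$ times a ball in a complementary copy of $l^2$, a smooth Banach manifold; in the classical setting this step is just the implicit function theorem.

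Next I would check that it is $pr_2|_{\mathcal{M}}$ --- not $\mathbf{f}$, whose linearization has infinite-dimensional kernel along $l^2$ --- that carries the Fredholm theory, with index $\text{ind}(f)$. In the chart above $pr_2|_{\mathcal{M}}$ is $(z,w)\mapsto((y_i(z))_{i\in I},w)$ for $z\in\mathbf{f}_I^{-1}(0)$: the identity on the complementary $l^2$ in one factor, and a smooth map from the finite-dimensional $\mathbf{f}_I^{-1}(0)$ to $\mathbb{R}^I$ in the other, automatically Fredholm of index $(\text{ind}(f)+|I|)-|I|=\text{ind}(f)$. Invariantly: at $(x,y)\in\mathcal{M}$, $\ker d(pr_2|_{\mathcal{M}})_{(x,y)}=\ker(f'(x)+\sum_j y_j s_j'(x))\times\{0\}$ is finite-dimensional and $\text{coker}\,d(pr_2|_{\mathcal{M}})_{(x,y)}\cong E|_x/\text{im}(f'(x)+\sum_j y_j s_j'(x))$ via $\eta\mapsto\sum_i\eta_i s_i(x)$, which is surjective onto this quotient by the span condition; the index count gives $\text{ind}(f)$. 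Then, $\mathcal{M}$ and $B^{l^2}_\epsilon(0)$ being separable, Sard--Smale yields a residual, hence dense, set of regular values of the $C^\infty$ Fredholm map $pr_2|_{\mathcal{M}}$. Finally $pr_2^{-1}(y)=(f+\sum_i y_i s_i)^{-1}(0)$ and $\text{im}\,d(pr_2|_{\mathcal{M}})_{(x,y)}=\{\eta\in l^2:\sum_i\eta_i s_i(x)\in\text{im}(f'(x)+\sum_j y_j s_j'(x))\}$, so $y$ is a regular value iff $\text{span}(s_i(x):i)\subset\text{im}(f'(x)+\sum_j y_j s_j'(x))$ for every $x\in pr_2^{-1}(y)$, which together with the span condition forces $f'(x)+\sum_j y_j s_j'(x)$ to be onto; i.e.\ a regular value $(y_i)$ gives a transverse perturbed section $f+\sum_i y_i s_i$.

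The hard part, and essentially the only place that needs genuine care, is the bookkeeping forced by combining the non-compactness of $f^{-1}(0)$ with infinitely many perturbation directions: one must use the harmless rescaling and cutoff to ensure $\mathbf{f}$ stays transverse on all of $\mathcal{M}$, not merely near $f^{-1}(0)\times\{0\}$, and one must keep straight that Fredholmness is a property of $pr_2|_{\mathcal{M}}$ and not of $\mathbf{f}$. Once the $l^2$-factor is handled by the constant-sc-structure device and one observes that locally only finitely many $s_i$ enter, the sc-smoothness verifications and the local models are precisely those of \ref{PFDFDRED}--\ref{STABILIZATION}, and the rest is the classical Sard--Smale and parametric-transversality package.
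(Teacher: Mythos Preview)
Your proposal is correct and follows essentially the same strategy as the paper: use local finiteness to reduce, near any point of $\mathcal{M}$, to a finite stabilization $\mathbf{f}_I$ times a complementary $l^2$-factor, deduce that $\mathcal{M}$ is a Banach manifold, show $pr_2|_{\mathcal{M}}$ is classical Fredholm of index $\text{ind}(f)$, and invoke Sard--Smale.

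The one genuine difference is in how Fredholmness of $pr_2|_{\mathcal{M}}$ is established. The paper argues via its R-equivalence formalism: locally $pr_2|_{\mathcal{M}}$ is classically R-equivalent to $pr_2$ on the finite piece $(\mathbf{f}_{S_x^c})^{-1}(0)\to l^2_{S_x^c}$, which is R-equivalent to $(\mathbf{f}_{S_x^c},pr_2)$, which is R-equivalent to $f|_{U_x}$; since R-equivalence preserves (sc-)Fredholmness and $f$ is sc-Fredholm, $pr_2|_{\mathcal{M}}$ is Fredholm. You instead compute the kernel and cokernel of $d(pr_2|_{\mathcal{M}})$ directly and read off the index. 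Your route is more elementary and self-contained; the paper's route exercises the R-equivalence machinery built up in the surrounding subsection (\ref{SUMMABILITYTRICK}, \ref{STABILIZATION}, \ref{INDDOESNOTM}) and thereby illustrates its utility. You also make explicit a normalization step (rescaling and cutoffs to control the perturbation uniformly on a neighbourhood $\mathcal{U}$ of $f^{-1}(0)$) that the paper leaves implicit in the choice of $\epsilon$; this is a reasonable clarification but not a different idea.
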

\begin{proof}
Because of local finiteness, for any $x\in B$, there exists an open $U_x$ with $x\in U_x\subset B$ and $S_x\subset \mathbb{N}$, such that $s_i|_{U_x}=0$ for $i\in S_x$ and $S^c_x:=\mathbb{N}\backslash S_x$ is finite. Let $l^2_{S_x}:=\{(y_i)_{i\in\mathbb{N}}\in l^2\;|\; y_i=0, i\in S_x^c\}$ and $l^2_{S_x^c}:=l^2\backslash l^2_{S_x}$ with a general element of the latter denoted by $(y_i)_{S^c_x}\in l^2_{S_x^c}$. Note that $\mathbf{f}^{-1}(0)$ decouples into $l^2_{S_x}\times (\mathbf{f}_{S_x^c})^{-1}(0)$ for transverse sc-Fredholm $\mathbf{f}_{S_x^c}:(x, (y_i)_{S^c_x})\mapsto f(x)+\sum_{i\in S_x^c}y_i s_i(x)$, and those local Banach manifolds patch up into a classical Banach manifold $\mathbf{f}^{-1}(0)\cap (B\times B^{l^2}_\epsilon(0))$. We now use the fact that R-equivalence preserves (sc-)Fredholmness. Locally, $pr_2: \mathbf{f}^{-1}(0)\cap (B\times B^{l^2}_\epsilon(0))\to B^{l^2}_\epsilon(0)$ is R-equivalent classically to $pr_2:(\mathbf{f}_{S^c_x})^{-1}(0)\cap (B\times B^{l^2}_\epsilon(0))\to l^2_{S^c_x}\cap B^{l^2}_\epsilon(0)$, which is R-equivalent to $(\mathbf{f}_{S^c_x},pr_2):(x,(y_i)_{S^c_x})\mapsto (\mathbf{f}_{S^c_x}(x,(y_i)_{S^c_x}),(y_i)_{S^c_x})$, which is R-equivalent to $f|_{U_x}$, which is sc-Fredholm. Therefore, $pr_2:\mathbf{f}^{-1}(0)\cap (B\times B^{l^2}_\epsilon(0))\to B^{l^2}_\epsilon(0)$ is a classical Fredholm map between Banach manifolds and one can apply Sard-Smale (namely, the classical analogue of \ref{DIMREDEX} obtained from \ref{DIMREDEX} by applying the remark before conventions at the start of this subsection).
\end{proof}

We never need to use infinitely many local perturbations for sc-Fredholm section with compact zero set in the orbit space, but we can use them for non-compact but locally compact moduli space and the above theorem applies. It becomes particularly interesting when the noncompact zero set in the orbit space forms a pseudocycle, see Wang's forthcoming work.

\subsection{Kuranishi structures}\label{KTHEORY}

Among the finite dimensional reduction approaches, I will first focus on \textbf{Fukaya-Oh-Ohta-Ono's Kuranishi structures} in \cite{FO}, \cite{FOOO}, \cite{FOOONEW}, \cite{FOOONEWER} and \cite{FOOOCF} side by side with some of my interpretations in \cite{DY}, \cite{DI} and \cite{MAXCAT}, in particular \ref{FOOOISDIR}, \ref{GCSWOIND}, \ref{LEVEL1CATEXIST} and \ref{REQUIVISEQUIV} (both viewpoints work and complement well with each other), because 
\begin{enumerate}[(i)]
\item They are ones I considered for the polyfold--Kuranishi correspondence in my thesis \cite{Dingyu};
\item They are more relevant to isomorphism to Kuranishi homology \cite{KH}, the latter of which is written by Joyce starting from good coordinate systems with certain data and properties obtained from FOOO's Kuranishi structures `replacing domain simplicies in a singular chain';
\item They have already been used in moduli spaces with corners featuring finer structures (a countable family of Kuranishi structures on compact spaces with corners exhausting the entire moduli space with the boundary of each Kuranishi structure being a union of fiber products of Kuranishi structures appeared earlier in the family ordered by energy and combinatorial data\footnote{We often summarize it by `filtration by compact Kuranishi structures with fiber product boundaries'.}, cyclic symmetry, and equivariance under compact Lie group action); and
\item They are relevant for one to ultimately construct a master theory commonly extending HWZ-FHWZ's SFT construction in \cite{PolyfoldGW} and \cite{FHWZ} and FOOO's Lagrangian Floer theory in \cite{FOOO}. One possible scenario for this is chain-level relative SFT in full generality of Cieliebak-Latschev-Mohnke's program in \cite{CLM} together with Irie's construction \cite{Irie} of chain-level string topology\footnote{Irie's chain level theory of string topology uses de Rham chain theory and is closely related to CF-perturbation \cite{FOOOCF} and level-1 structure \cite{DI} of Kuranishi structures.}, and another is a degeneration formula of Fukaya category degenerating along a stable Hamiltonian structure.
\end{enumerate}

The notion of a Kuranishi structure was invented by Fukaya and Ono in \cite{FO}, and improved further in FOOO's \cite{FOOO}. A Kuranishi structure roughly speaking is a global geometric structure made up of a coherent organization of local finite dimensional reductions (allowing dimensions to vary among reductions) from an underlying Fredholm problem with analytic limiting phenomena and domain variation. The idea is that one refrains from directly describing and perturbing the Fredholm problem (thus no need to face various subtleties such as nowhere differentiable coordinate changes and gluing in the ambient space), and one instead remembers a Kuranishi structure data out of it to globally capture the essential Fredholm information. One can then precisely describe and perturb the chosen Kuranishi structure using topology and finite dimensional tools, where one faces different kinds of subtleties such as varying dimension among reductions and non-canonical nature of reductions. There are preprints in the literature contributing to the extended discussions on details and subtleties of the theory of Kuranishi structures chronologically in \cite{KH}, \cite{DY}, \cite{FGG}, \cite{MW}, \cite{FOOONEW}, \cite{DI}, \cite{FOOONEWER}, \cite{FOOOCF}. There are also subsequent papers following up \cite{KH} and \cite{MW} before \cite{FOOONEWER} and \cite{FOOOCF} respectively, but since they are not strictly Kuranishi structures in FOOO's sense, they will be introduced in the relevant subsections later.

To be concrete and better aid the discussions in this subsection, I will first briefly introduce the notions (mostly notations) of a Kuranishi structure, a good coordinate system, and a map between Kuranishi structures (introduced in the video talk \cite{DKSG} and not a morphism in the category yet, the latter of which will be discussed in \cite{MAXCAT} following \cite{DKSG}).

\begin{definition}\label{KURANISHISTRUCTURE}(Kuranishi structure \cite{FO}, \cite{FOOO}) Let $X$ be a compact metrizable topological space. A \textbf{Kuranishi structure} on $X$ is the tuple $$\mathcal{K}_X:=(X,\{C_p\}_{p\in X},\{C_q\to C_p\}_{q\in X_p}),\;\;\text{where}$$
\begin{enumerate}[(1)]
\item for every $p\in X$, a \textbf{Kuranishi chart} $C_p=(G_p,s_p,\psi_p)$ consists of a section $s_p: V_p\to E_p$ in a finite dimensional vector bundle $E_p$\footnote{The non-triviality of $E_p$ is just for an easier notation and sometimes better clarity.} with smooth effective action by a finite group $G_p$ (with $\underline{\;\cdot\;}$ denoting its quotient of an object or a map), under which $s_p$ is equivariant, and a homeomorphism $\psi_p$ from $\underline{s_p^{-1}(0)}$ onto the image $X_p:=\psi_p(\underline{s_p^{-1}(0)})$ such that $X_p$ is an open neighborhood of $p$ in $X$, and
\item for every $q\in X_p$, a \textbf{coordinate chang}e $(C_q\to C_p):=(\phi_{pq},\hat\phi_{pq},V_{pq})$, consisting of a $G_q$-invariant subset $V_{pq}$ of $V_q$, called the \textbf{domain of the coordinate change}, such that $q\in X_q|_{V_{pq}}:=\psi_q(\underline{(s_q|_{V_{pq}})^{-1}(0)})$, and a $G_q$-$G_p$ equivariant bundle embedding \footnote{A $G_q$-$G_p$ equivariant embedding, or simply an \textbf{equivariant embedding}, is an embedding which induces an injective map between quotient spaces and induces isomorphisms between stabilizer groups of the domain points and those of their images in the target. An equivariant bundle embedding is a bundle map which is an equivariant embedding.} $\hat\phi_{pq}: E_q|_{V_{pq}}\to E_p$ covering an equivariant base embedding $\phi_{pq}:V_{pq}\to V_p$ such that $$\hat\phi_{pq}\circ (s_q|_{V_{pq}})=s_p\circ\phi_{pq}\;\;\text{and}\;\; \psi_p\circ\underline{\phi_{pq}|_{(s_q|_{V_{pq}})^{-1}(0)}}=\psi_q|_{\underline{(s_q|_{V_{pq}})^{-1}(0)}},$$ satisfying the \textbf{tangent bundle condition} (clearest if stated pointwise): for every $z\in (s_p|_{\phi_{pq}(V_{pq})})^{-1}(0)$, the linearization $s_p'(z): T_zV_p\to (E_p)|_z$ induces an isomorphism $T_zV_p/T_z(\phi_{pq}(V_{pq}))\to (E_p)|_z/(\hat\phi_{pq}(E_{pq}))|_z$,
\end{enumerate}
such that coordinate changes are compatible up to $G_p$-actions (on $C_p$'s) as maps from the common domains $V_{qr}\cap V_{pr}\cap \phi_{qr}^{-1}(V_{pq})$, $r\in X_q\cap X_p, q\in X_p$ of coordinate changes (namely, on the common domain, the diagram commutes up to $G_p$ action on $C_p$):\\
\begin{center}
\hspace{0 cm}
\begin{tikzpicture}[scale=1]
\begin{scope}
\matrix(m)[matrix of math nodes, row sep=2.5em, column sep=2.5 em,
text height=1 ex, text depth=0.25ex]{
C_r & C_q \\
&   C_p\\
};

\path[thick, >= angle 60, ->]
(m-1-1)  edge node [left=0 cm, above=0cm]{$\hat\phi_{qr}$} (m-1-2)
(m-1-2)  edge node [left=-.4 cm, above=-0.3cm]{$\hat\phi_{pq}$} (m-2-2)
(m-1-1)  edge node [left=0.1 cm, above=-0.6cm]{$\hat\phi_{pr}$} (m-2-2);
\end{scope}
\end{tikzpicture}
\end{center}
\end{definition}

\begin{definition}\label{GCS} (good coordinate system \cite{FO}, \cite{FOOO}, also see \cite{DI}) Let $X$ be compact metrizable. A \textbf{good coordinate system} on $X$ is the tuple
$$\mathcal{G}_X:=(X, (S,\leq), \{C_x\}_{x\in S}, \{C_y\to C_x\}_{y\leq x, X_y\cap X_x\not=\emptyset}),$$
\begin{enumerate}[(1)]
\item a finite index set $S\subset X$ with a partial order $\leq$ on it (a total order without antisymmetry),
\item for $x\in S$, a Kuranishi chart $C_x=(G_p,s_p: U_x\to E_x,\psi_p)$ with $G_p$ action and covering $X_x$ as in \ref{KURANISHISTRUCTURE} such that $X=\bigcup_{x\in S} X_x$, and
\item for $y,x\in S$ such that $y\leq x$ and $X_y\cap X_x\not=\emptyset$, a coordinate change $(C_y\to C_x)=(\phi_{xy},\hat\phi_{xy},U_{xy})$ just in \ref{KURANISHISTRUCTURE} except instead of requiring $y\in X_x|_{U_{xy}}$, we require that $X_y|_{U_{xy}}:=\psi_y(\underline{(s_y|_{U_{xy}})^{-1}(0)})=X_y\cap X_x$,
\end{enumerate}
such that coordinate changes are compatible up to $G_x$-actions as maps from the common domains of coordinate changes.
\end{definition}

\begin{definition}\label{THEMAP} (map between Kuranishi structures \cite{DKSG}, \cite{MAXCAT}) A \textbf{map} (not a morphism to be defined in \cite{MAXCAT}) from a Kuranishi structure $$\mathcal{K}_X:=(X,\{C_p\}_{p\in X},\{C_q\to C_p\}_{q\in X_p})$$ to another Kuranishi structure $$\tilde{\mathcal{K}}_Y:=(Y,\{\tilde C_w\}_{w\in Y},\{\tilde C_z\to \tilde C_w\}_{z\in Y_w})$$ is a tuple $(\phi,\{C_p\to \tilde C_{\phi(p)}\}_{p\in X})$ such that
\begin{enumerate}[(1)]
\item $\phi: X\to Y$ is continuous and $\phi(X_p)\subset Y_{\phi(p)}$, and
\item for every $p\in X$, a \textbf{chart map} $(C_p\to \tilde C_{\phi(p)})=(\phi_p,\hat\phi_p, V_p)$, which is a $G_p$-$\tilde G_{\phi(p)}$-equivariant bundle map $\hat\phi_p: E_p\to \tilde E_{\phi(p)}$ covering $\phi_p: V_p\to \tilde V_{\phi(p)}$ satisfying $\tilde s_{\phi(p)}\circ\phi_p=\hat\phi_p\circ s_p$ and $\tilde\psi_{\phi(p)}\circ\underline{\phi_p}|_{\underline{s_p^{-1}(0)}}=\phi\circ\psi_{p}$ without the tangent bundle condition requirement, here $V_p$ is called the \textbf{domain of chart map} which is the entire $V_p$ in the Kuranishi chart $C_p$,
\end{enumerate}
such that the square

\begin{center}
\begin{tikzpicture}[scale=1]
\matrix(m)[matrix of math nodes, row sep=3.5 em, column sep=4.5 em,
text height=1 ex, text depth=0.25ex]{
C_q &  \tilde C_{\phi(q)} \\
C_p &   \tilde C_{\phi(p)}\\
};

\path[thick, >= angle 60, ->]
(m-1-1)  edge node [left=0 cm, above=0 cm]{$\hat\phi_{q}$} (m-1-2)
(m-1-1)  edge node [left=0.35 cm, above=-.3 cm]{$\hat\phi_{pq}$} (m-2-1)
(m-2-1)  edge node [left=0 cm, above=0 cm]{$\hat\phi_p$} (m-2-2)
(m-1-2)  edge node [left=-0.8 cm, above=-.3 cm]{$\widehat{\tilde\phi}_{\phi(p)\phi(q)}$} (m-2-2);
\end{tikzpicture}
\end{center}

is commutative up to $\tilde G_{\phi(p)}$ action (as maps from the common domain $V_{pq}$ of definitions) for all $q\in X_p$ (so $\phi(q)\in \phi(X_p)\subset Y_{\phi(p)}$ thus the coordinate change on the right column exists.). We will denote such a map by $\mathcal{K}_X\to \tilde{\mathcal{K}}_Y$ or $\Phi=(\phi,\{C_p\to\tilde C_{\phi(p)}\}_{p\in X})=(\phi, \{(\phi_p,\hat\phi_p, V_p)\}_{p\in X})$.
\end{definition}

\begin{example}\label{EXAMPLEMAP} One of the upshots of the concept of a map is that, chart-refinements, embeddings in \cite{DI}, and strongly smooth maps in \cite{FO} and \cite{FOOO} are all examples of it.
\begin{enumerate}[(1)]
\item (embedding, \cite{DI}) An \textbf{embedding} between Kuranishi structures is a map $\mathcal{K}_X\to \tilde{\mathcal{K}}_Y$ where $Y=X$, $\phi=Id$, $\tilde G_p=G_p$ for all $p\in X$, and all chart maps are chart embeddings. Here, a \textbf{chart embedding} is a chart map whose $\hat\phi_p$ is also a $G_p$-$G_p$-equivariant embedding satisfying the tangent bundle condition analogous\footnote{Namely, for every $z\in (\tilde s_p|_{\phi_p(V_p)})^{-1}(0)$, the linearization $\tilde s_p'(z): T_z\tilde V_p\to (\tilde E_p)|_z$ induces an isomorphism $T_z\tilde V_p/T_z(\phi_p(V_p))\to (\tilde E_p)|_z/(\hat\phi_p(E_p))|_z$.} to that in \ref{KURANISHISTRUCTURE} (2). We denote it by $\mathcal{K}_X\Rightarrow\tilde{\mathcal{K}}_X$.
\item (chart-refinement, \cite{DI}) A \textbf{chart-refinement} is an embedding where all the chart maps $\hat\phi_p$ are open. We denote it by $\mathcal{K}_X\overset{\sim}{\to}\tilde{\mathcal{K}}_X$, and $\mathcal{K}_X$ is also said to be a chart-refinement of $\tilde{\mathcal{K}}_X$. 

If two Kuranishi structures have a common chart-refinement, they are said to be \textbf{chart-equivalent}. This is obviously an equivalence relation.
\item (shrinking, restriction, \cite{DI}) If $\mathcal{K}_X\overset{\sim}{\to}\tilde{\mathcal{K}}_X$ is a chart-refinement for which all the $\hat\phi_p$ are inclusions, we say $\mathcal{K}_X$ is a \textbf{shrinking} or \textbf{restriction} of $\tilde{\mathcal{K}}_X$. 

Given a Kuranishi structure $\mathcal{K}_X$, choose a fixed $G_p$-invariant subset $V'_p$ of $V_p$ for each $p\in X$ such that $p\in X_p|_{V'_p}:=\psi_p(\underline{(s_p|_{V'_p})^{-1}(0)})$, and one can define $V'_{pq}:=V'_q\cap (\phi_{pq})^{-1}(V'_p)\subset V_{pq}$\footnote{Thus, if $q\in X_p|_{V'_p}$, then $q\in X_q|_{V'_{pq}}$, one of conditions in the definition of a Kuranishi structure.} then $$(X,\{C_p|_{V'_p}\}_{p\in X},\{(C_q|_{V'_q}\to C_p|_{V'_p}):=(\phi_{pq}|_{V'_{pq}},\hat\phi_{pq}|_{(E_q|_{V'_{pq}})},V'_{pq})\}_{q\in X_p|_{V'_p}})$$ is also a Kuranishi structure and is a shrinking of $\mathcal{K}_X$, and we also write $\mathcal{K}_X|_{\{V'_p\}_{p\in X}}$.
\item (strongly smooth, \cite{FO} and \cite{FOOO}) If $\tilde{\mathcal{K}}_Y$ has $\tilde E_w=0$ for all $w\in Y$, then $\tilde s_w=0$ and $\widehat{\tilde\phi}_{wz}=\tilde\phi_{wz}$, thus $\tilde{\mathcal{K}}_Y$ is just a representative of an orbifold. If $\tilde G_w=\{Id\}$ for all $w\in Y$, then $\tilde{\mathcal{K}}_Y$ is just a manifold. For those two cases, data $\Phi:\mathcal{K}_X\to\tilde{\mathcal{K}}_Y$ is called a \textbf{strongly smooth} map in \cite{FO} and \cite{FOOO}.
\item We have analogous notions and examples of \textbf{map}s with various extra conditions like (1) - (4) above for good coordinate systems (for maps between Kuranishi structures/good coordinate systems, and for level-1 \ref{LEVELONECC} embeddings, respectively) as well and we omit them here (see \cite{DI} and \cite{MAXCAT}).
\end{enumerate}
\end{example}

\begin{definition}\label{KURANISHIREFINEMENT} (refinement, \cite{DI}) A Kuranishi structure $\mathcal{K}'_X$ is said to \textbf{refine} or be a \textbf{refinement} of $\mathcal{K}_X$ if there exists a diagram $\mathcal{K}_X\overset{\sim}{\leftarrow}\mathcal{K}''_X\Rightarrow\mathcal{K}'_X$.
\end{definition}
\begin{remark}((trivially) equivalent formulation of \ref{KURANISHIREFINEMENT}) Suppose we have a refinement $\mathcal{K}_X\overset{\sim}{\leftarrow}\mathcal{K}''_X\Rightarrow\mathcal{K}'_X$, we can define a shrinking of Kuranishi structure $\mathcal{K}_X$ using the chart-refinement $(\phi^1,\{(\phi^1_p,\hat\phi^1_p, V''_p)\}_{p\in X}):\mathcal{K}''_X\overset{\sim}{\to}\mathcal{K}_X$ as $\tilde{\mathcal{K}}_X:=\mathcal{K}_X|_{\{\phi^1_p(V''_p)\}_{p\in X}}$, see \ref{EXAMPLEMAP} (3). Then the inclusion $\tilde{\mathcal{K}}_X$ into $\mathcal{K}_X$ is a chart-refinement (a shrinking), and the chart-refinement $\mathcal{K}''_X$ into $\mathcal{K}_X$ has the image $\tilde{\mathcal{K}}_X$, so it can be inverted, and we have that $\tilde{\mathcal{K}}_X\overset{\sim}{\to}\mathcal{K}''_X\Rightarrow\mathcal{K}'_X$ which composes into an embedding. Thus, we can always turn a refinement into this special form $\mathcal{K}_X\overset{\sim}{\leftarrow}\mathcal{K}_X|_{\{\phi^1_p(V''_p)\}_{p\in X}}\Rightarrow\mathcal{K}'_X$, where we restrict and embed. Thus, an equivalent (and possibly more intuitive) formulation of a refinement is that $\mathcal{K}'_X$ is a \textbf{refinement} of $\mathcal{K}_X$ if $\mathcal{K}_X|_{\{\tilde{V}_p)\}_{p\in X}}\Rightarrow\mathcal{K}'_X$, that is, a shrinking of $\mathcal{K}_X$ embeds into $\mathcal{K}'_X$. We choose \ref{KURANISHIREFINEMENT} in \cite{DI} instead, so that we do not need to convert to this particular form after each step (e.g. in diagram chasing), and also a chart-refinement will just be a refinement with the `$\Rightarrow$' part being identity\footnote{We cannot define a chart-refinement as a shrinking (too restrictive). So in the same vein to the reformulation of refinement, $\mathcal{K}'_X$ is a chart-refinement of $\mathcal{K}_X$ if a shrinking of $\mathcal{K}_X$ openly embeds into $\mathcal{K}'_X$.}.
\end{remark}

\begin{definition}\label{REQUIV}(R-equivalence \cite{DI}, c.f. \ref{SCFREDEMBEDDING}\footnote{Each formulation is most natural in its own setting.}) Two Kuranishi structures are said to be \textbf{R-equivalent} if they have a common refinement. 

This being an \textbf{equivalence relation} for Kuranishi structures with a version of maximality and topological matching conditions (see \cite{DY}, and c.f. \ref{MAXMATCHING} (1) and (3)) is announced with a strategy in \cite{DY} and established in \cite{DI} 11.4. R stands for refinement; and in \cite{DY},\cite{Dingyu}, \cite{DI}, we just say equivalence, but we choose to call it R-equivalence in this article to avoid possible confusion with chart-equivalence defined in \ref{EXAMPLEMAP} (2). An equivalence class is called an \textbf{R-equivalence class} or an \textbf{R-germ} for short. There is an analogous notion of \textbf{refinement} for good coordinate systems (see \cite{DI} 5.7, 5.10 and 5.15), from which we can define \textbf{R-equivalence} for good coordinate systems via admitting a common refinement; and this R-equivalence for good coordinate systems (with the corresponding maximality and topological matching conditions, see \cite{DI} 11.1 for a proof, and also in general via \ref{FOOOISDIR}) is also an \textbf{equivalence relation}. (However, we observe that the chart-equivalence for good coordinate systems is not an equivalence relation unlike the case for Kuranishi structures.) Similarly, there are notions of refinement and R-equivalence (as an equivalence relation) for maps (and respectively for level-1 \ref{LEVELONECC} embeddings) between Kuranishi structures/good coordinate systems.
\end{definition}

We say a few words on the motivations of introducing refinement and R-equivalence. Since one of my first goals is to have a well-defined map from HWZ's sc-Fredholm sections into FOOO's Kuranishi structures, one needs to account for the ambiguity of the resulting global structure due to non-canonical nature of local finite dimensional reductions and different choices made in the construction. I came up with the notion of R-equivalence (first announced in \cite{DY})). Then the construction from sc-Fredholm structures to Kuranishi structure R-equivalence classes becomes a well-defined map. The tough part is to show R-equivalence is an equivalence relation (transitivity, because some data for doing roof construction is too cumbersome to be directly included in the definition of a Kuranishi structure). Of course, one could have defined two Kuranishi structures $\mathcal{K}_X$ and $\mathcal{K}'_X$ are `equivalent' if there are a finite sequence of Kuranishi structures $\mathcal{K}_X^1:=\mathcal{K}_X, \mathcal{K}^2_X, \cdots, \mathcal{K}^N_X:=\mathcal{K}'_X$ such that $\mathcal{K}^i_X$ and $\mathcal{K}^{i+1}_X$ admit a common refinement $\tilde{\mathcal{K}}^i_X$ for $1\leq i\leq N-1$, namely, a zigzag which makes transitivity a non-issue. One can then note that being R-equivalent (and thus this temporary notion of `equivalence' via zigzag) implies being Kuranishi cobordant via a Kuranishi structure on $X\times [0,1]$. This then suggests that one can either work with (chart-equivalence classes of) Kuranishi structures equivalent up to Kuranishi cobordisms, or work with zigzag-equivalence classes and using Kuranishi cobordism to compare perturbations. The former suggestion is unsatisfactory because a Kuranishi structure is a geometric object (generalization of orbifolds), it would be nice to have a notion of equivalence which is geometric rather than cobordant. The second suggestion is unsatisfactory because one should be able to compare perturbations of `equivalent' Kuranishi structures more directly (preferable in the same space); and moreover, since being R-equivalent means that two Kuranishi structures are really the same (for example, two choices of associated good coordinate systems for a given Kuranishi structure are R-equivalent, but not chart-equivalent, see \ref{EXAMPLEMAP} (2) and (5) above), one should be able to compare two `equivalent' Kuranishi structures directly and geometrically. Also this R-equivalence being defined \textbf{globally} (via common refinements) means that each Kuranishi structure as an entity is not disrupted, so that one can either work on the level of Kuranishi structures and invoke the theory of R-equivalence classes when needed, or work with R-equivalence classes directly. All these considerations explain my choice of the form of the R-equivalence. (Of course, R-equivalence being an equivalence implies that zigzag-`equivalent' Kuranishi structures are R-equivalent.)

Even if we are not concerned with passing from sc-Fredholm sections to Kuranishi structures, refinements and R-equivalence are important and natural notions: Because to define a fiber product it is good to use Kuranishi structures, and to do inductive construction\footnote{E.g. the summing construction c.f. \ref{FORGETFULCONSTRUCTION} (iii), perturbations, and constructing a moduli space (equipped with natural maps) which is compactly filtered by Kuranishi structures (with corners) with boundary being fiber products of Kuranishi structures (with natural maps) appeared previously in the filtration.} it is good to use good coordinate systems, one needs to go back and forth (at least on a part of the space) and be able to relate them; but the induced Kuranishi structure $\mathcal{K}(\mathcal{G}_X)$ of a good coordinate system $\mathcal{G}_X$ obtained from a Kuranishi structure $\mathcal{K}_X$ only refines but not chart-refines $\mathcal{K}_X$, so after this process, we leave the chart-equivalence class but stay in the R-equivalence class. Moreover, if working with Kuranishi structure R-equivalence classes, the boundary of a Kuranishi structure R-equivalence class $[\mathcal{K}_X]_R$, where each of such $\mathcal{K}_X$'s covers a compact part of the moduli space and together they exhaust the moduli space, has a structure of a union of fiber products such that the fiber product operation is graded commutative and associative.

In March 16, 2012, I gave my definition of refinement and a version of level-1 structure of a good coordinate system in \cite{DY} (which is useful for compatibly lifting and extending perturbations to local sections of varying dimensions, as well as for showing admitting a common refinement, the R-equivalence, is an equivalence relation which helps to address the non-canonicality of local finite dimensional reductions), written as a short note at the begining of discussion of Kuranishi google group. The idea is that we can turn the system of embeddings in the coordinate changes into a system of submersions onto the images of embeddings in a compatible way to have a global structure, called \textbf{level-1 structure}. My motivation in doing this is twofold: One is to show that R-equivalence is transitive by composing zigzags via base submersions and bundle projections in level-1 structures. The second motivation is regarding to the inductive construction of perturbations, c.f. a local finite dimensional case of \ref{KEYREMARKS} (4). Because the tangent bundle condition is specified (canonically) at zeros only, if one does the inductive chartwise perturbation, by the very definition of induction, when perturbing lower-ordered charts, one has no prior knowledge of the validity region of the tangent bundle condition (open condition after making some choice) for coordinate changes into the higher-ordered/dimensional charts. So when doing perturbation in the higher-ordered chart, the transversality in the normal directions of the embedded images of lower-ordered charts is possibly not guaranteed by the tangent bundle condition (near the zeros), one needs to either introduce some arbitrary normal perturbation which will mess up the tangent bundle conditions into higher ordered charts later, or go back and modify the perturbed sections in the lower-ordered charts to be smaller. One can also carry a sequence of increasingly uniform small perturbations inductively constructed for each chart, and argue that at each stage of inductive construction of higher ordered charts, for sufficiently small ones, one can still use them as the tangent bundle condition is valid there for any choice of data to define it with smallness depending on this choice; and at each stage, we throw away finitely many of them. A more elegant solution is to establish regions of validity of the tangent bundle condition before carrying out the inductive construction of perturbations (especially for moduli spaces with fiber product structures on the boundaries where we want the perturbations to respect these structures), and to have compatible mechanisms to lift and extend the prior constructed perturbations, and moreover this submersion/bundle structure also gives the correct topology for dimension jump, see \cite{DI} 13 (B). This is the second motivation of defining a level-1 structure, and associating level-1 structures to Kuranishi structures and to embeddings between Kuranishi structures (via good coordinate systems and embeddings between good coordinate systems respectively), and this is done in \cite{DI}.

\begin{definition}\label{LEVELONECC}(level-1 structure for a coordinate change, \cite{DI}, also see \cite{DI} 7.24 for \textbf{level-1 good coordinate system} (indexed more generally, see \cite{DI} subsection 7.1 and remark 7.28 (II)) and \cite{DI} 7.27 for \textbf{level-1 embedding})

Let $(C_x\to C_y)=(\phi_{xy},\hat\phi_{xy},U_{xy})$ be a coordinate change in a good coordinate system $\mathcal{G}_X$. A \textbf{level-1 structure} associated to it (after omitting some technicality, see full details in \cite{DI} 7.16) is the tuple $$(C_y|_{U'_y}\overset{\text{level-1}}{\to} C_x|_{U'_x}):=(U'_y, U'_x, U'_{xy}, W_{xy}, \pi_{xy}, \tilde\pi_{xy}, \hat\pi_{xy}),\;\;\text{where}$$
\begin{enumerate}
\item $U'_y\subset U_y$ is a $G_y$-invariant open subset such that $y\in X_y|_{U'_y}:=\psi_y(\underline{(s_y|_{U'_y})^{-1}(0)})$, similarly for $U'_x$, and $U'_{xy}=U'_y\cap \phi_{xy}^{-1}(U'_x)\subset U_{xy}$;
\item $W_{xy}$ is a $G_y$-invariant tubular neighborhood of $\phi_{xy}(U'_{xy})$ in $U'_x$, and $\pi_{xy}:W_{xy}\to \phi_{xy}(U'_{xy})$ is a $G_y$-equivariant submersion with disk-like fibers such that $(s_x|_{W_{xy}})^{-1}(0)=\phi_{xy}((s_y|_{U'_{xy}})^{-1}(0))$\footnote{When we talk about a level-1 structure of a coordinate change, we just use notation $W_{xy}$; but we talk about a level-1 structure for a coordinate chnage and its shrinking $C_y|_{U'_y}\to C_x|_{U'_x}$, we will use $W'_{xy}$ and $W''_{xy}$ respectively.};
\item $\tilde\pi_{xy}:\tilde E_{xy}\to \hat\phi_{xy}(E_y|_{U'_{xy}})$ is $G_y$-equivariant fiberwise isomorphic bundle map covering $\pi_{xy}$ and $\tilde E_{xy}$ is a $G_y$-invariant subbundle of $E_x|_{W_{xy}}$ extending $\phi_{xy}(E_y|_{U'_{xy}})$ such that $(s_y|_{W_{xy}})/\tilde E_{xy}$ is transverse (to the 0-section), and
\item $\hat\pi_{xy}: E_x|_{W_{xy}}\to \tilde E_{xy}$ is a $G_y$-equivariant bundle projection covering identity between bases $W_{xy}$.
\end{enumerate}
\end{definition}

Working out the form of compatibility of level-1 coordinate changes (and with embeddings) and constructing a level-1 structure globally are highly non-trivial tasks, and see \cite{DI} 7.19 (7.21) and 7.27 for the former, and \cite{DI} section 7 for the latter.

One of the main theorems in \cite{DI} is that:
\begin{theorem}\label{LEVEL1CATEXIST} (\cite{DI}) Let $\mathcal{K}_X$ be a Kuranishi structure satisfying the maximality and topological matching conditions \ref{MAXMATCHING} (1) and (3). (See the obtaining of these two conditions for an arbitrary FOOO's Kuranishi structure up to refinement in \ref{FOOOISDIR}; and from this result obtaining these two conditions for both Kuranishi structures in an embedding up to refinement is trivial.) Then there exists a level-1\footnote{always Hausdorff by construction} good coordinate system $\mathcal{G}_X^{\text{level-1}}$ for $\mathcal{K}_X$. 

For an embedding $\mathcal{K}_X\Rightarrow\mathcal{K}'_X$ between such Kuranishi structures and a fixed choice of level-1 good coordinate system $\mathcal{G}_X^{\text{level-1}}$ for $\mathcal{K}_X$, there exists a level-1 embedding $\tilde{\mathcal{G}}_X^{\text{level-1}}\overset{\text{level-1}}{\Rightarrow}{\hat{\mathcal{G}}}_X^{\text{level-1}}$ associated to the embedding of Kuranishi structures, where ${\tilde{\mathcal{G}}}_X^{\text{level-1}}$ is a level-1 shrinking of ${\mathcal{G}}_X^{\text{level-1}}$ and ${\hat{\mathcal{G}}}_X^{\text{level-1}}$ is a level-1 good coordinate system for $\mathcal{K}'_X$. 

The same results also hold, if starting with a good coordinate system $\mathcal{G}_X$ with the maximality and topological matching conditions, and an embedding $\mathcal{G}_X\Rightarrow\mathcal{G}'_X$ between such objects respectively.
\end{theorem}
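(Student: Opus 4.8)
The plan is to prove the good coordinate system version first, since the Kuranishi structure version reduces to it: starting from $\mathcal{K}_X$ with the maximality and topological matching conditions \ref{MAXMATCHING} (1) and (3), one invokes \ref{GCSWOIND} --- chartwise modify the Kuranishi charts to acquire a partial order on a finite index set $S\subset X$ extracted using compactness of $X$, the maximality condition guaranteeing that a finite subfamily still covers --- to obtain a good coordinate system, and then passes to a sufficiently small precompact shrinking in a strong shrinking sequence to make it Hausdorff. A level-1 structure on this Hausdorff good coordinate system is by definition a level-1 good coordinate system for $\mathcal{K}_X$. So everything reduces to: (A) given a Hausdorff good coordinate system $\mathcal{G}_X$ with the two conditions, construct a level-1 structure on a (shrinking of) it; and (B) given an embedding $\mathcal{G}_X\Rightarrow\mathcal{G}'_X$ of such objects, construct a level-1 embedding. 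For the embedding of Kuranishi structures one first extends the underlying good coordinate system of $\mathcal{G}_X^{\text{level-1}}$ across the embedding (see below), reducing to (B).

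For (A) I would construct the level-1 data one coordinate change at a time and then patch. Fix a coordinate change $(C_x\to C_y)=(\phi_{xy},\hat\phi_{xy},U_{xy})$. The tangent bundle condition gives at each $z\in(s_y|_{\phi_{xy}(U_{xy})})^{-1}(0)$ an isomorphism $T_zU_y/T_z(\phi_{xy}(U_{xy}))\to (E_y)|_z/(\hat\phi_{xy}E_x)|_z$; by openness of this transversality condition together with an averaging argument over $G_y$, one obtains a $G_y$-invariant fiber-standard subbundle $\tilde E_{xy}$ of $E_y$ over a $G_y$-invariant tubular neighborhood $W_{xy}$ of $\phi_{xy}(U'_{xy})$ (for a suitable $G_y$-invariant shrinking $U'_y\subset U_y$) extending $\hat\phi_{xy}(E_x|_{U'_{xy}})$ and such that $s_y|_{W_{xy}}$ is in good position to $\tilde E_{xy}$ --- this is the finite-dimensional avatar of \ref{PFDFDRED} and \ref{QUOTIENTTHEORY}. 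The implicit function theorem applied to the transverse quotient section $(s_y|_{W_{xy}})/\tilde E_{xy}$ produces the $G_y$-equivariant submersion $\pi_{xy}:W_{xy}\to \phi_{xy}(U'_{xy})$ with disk-like fibres and $(s_y|_{W_{xy}})^{-1}(0)=\phi_{xy}((s_x|_{U'_{xy}})^{-1}(0))$, while a $G_y$-invariant connection plus a fibrewise projection supplies $\tilde\pi_{xy}:\tilde E_{xy}\to \hat\phi_{xy}(E_x|_{U'_{xy}})$ and $\hat\pi_{xy}:E_y|_{W_{xy}}\to\tilde E_{xy}$, yielding a level-1 structure for this single coordinate change in the sense of \ref{LEVELONECC}. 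Globalizing is then done by induction over the finite partially ordered set $(S,\leq)$: when a new coordinate change $(C_x\to C_z)$ enters, its level-1 data must be chosen compatibly with the data already built for all chains $C_x\to C_y\to C_z$, i.e.\ over the overlap region $\pi_{yz}$ must carry $W_{xz}$ into $\phi_{yz}(W_{xy})$-type neighborhoods so that $\pi_{yz}$ and $\pi_{xy}$ compose (up to the relevant $G$-action) into something refining $\pi_{xz}$, and likewise for $\tilde E$ and the $\hat\pi$'s; this is the compatibility of level-1 coordinate changes worked out in \cite{DI} 7.19 (7.21). To make the regions of validity line up one repeatedly passes to sufficiently small $G$-invariant precompact shrinkings of the $U'_x$ and of the $W_{xz}$ --- the same mechanism that achieves Hausdorffness --- and since $S$ is finite the process terminates, giving the claimed level-1 good coordinate system.

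For (B), given an embedding $\mathcal{K}_X\Rightarrow\mathcal{K}'_X$ with the two conditions and a fixed $\mathcal{G}_X^{\text{level-1}}$ for $\mathcal{K}_X$, I first extend the underlying good coordinate system: place the charts of a shrinking $\tilde{\mathcal{G}}_X$ of $\mathcal{G}_X$ inside $\mathcal{K}'_X$ via the chart embeddings, adjoin enough new charts of $\mathcal{K}'_X$ (picked by compactness using maximality) to cover $X$, and order $S$ below the new indices, producing a good coordinate system $\hat{\mathcal{G}}_X$ for $\mathcal{K}'_X$ with an embedding $\tilde{\mathcal{G}}_X\Rightarrow\hat{\mathcal{G}}_X$. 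One then runs the inductive construction of (A) on $\hat{\mathcal{G}}_X$ but \emph{keeping the level-1 data of $\tilde{\mathcal{G}}_X$ fixed}, constructing level-1 data only for the new coordinate changes and demanding in addition that the new submersions and bundle maps restrict correctly along the embedded images of the old charts --- which is exactly the notion of level-1 embedding of \cite{DI} 7.27. As the new part of the index set is again finite, finitely many precompact shrinkings (absorbed into the shrinking $\tilde{\mathcal{G}}_X^{\text{level-1}}$ of $\mathcal{G}_X^{\text{level-1}}$) suffice.

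The main obstacle is precisely the coherence of the level-1 data: arranging the tubular neighborhoods $W_{xz}$, the submersions $\pi_{xz}$, the extending subbundles $\tilde E_{xz}$ and the projections $\hat\pi_{xz}$ so that they satisfy the compatibility relations of \cite{DI} 7.19--7.21 and 7.27 (which are not strict cocycle conditions, since one only has containment of images of embeddings rather than invertibility), while simultaneously keeping all domains small enough both to stay inside the regions of validity of the tangent bundle condition and to preserve Hausdorffness, and doing all of this $G$-equivariantly. The finite-dimensionality of the reductions, which makes the implicit function theorem available --- in contrast to the polyfold setting, cf.\ \ref{KEYREMARKS} (3) --- together with the finiteness of the index set is what makes the repeated-shrinking strategy terminate; the bookkeeping of which shrinking is forced by which compatibility relation is the technical heart of the argument.
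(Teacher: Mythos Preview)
The paper does not actually contain a proof of this theorem; it is stated with attribution to \cite{DI} and the surrounding text only points to where in \cite{DI} the pieces are: the form of compatibility of level-1 coordinate changes in \cite{DI} 7.19 and 7.21, the level-1 embedding in \cite{DI} 7.27, and the existence statements in \cite{DI} 7.32 and 7.36. So there is no in-paper proof to compare against, only these signposts.

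Your outline is consistent with those signposts and with what the paper says informally about the construction: reduce to the good coordinate system case via \ref{GCSWOIND}, build level-1 data for a single coordinate change from the tangent bundle condition plus equivariant tubular neighborhoods and bundle projections, and then induct over the finite ordered index set with repeated precompact shrinkings to enforce the compatibilities of \cite{DI} 7.19--7.21. You correctly identify coherence of $(W_{xz},\pi_{xz},\tilde E_{xz},\hat\pi_{xz})$ across composable coordinate changes as the crux. Two small points. First, in your single--coordinate--change step you obtain $\pi_{xy}$ by ``applying the implicit function theorem to $(s_y|_{W_{xy}})/\tilde E_{xy}$''; the submersion $\pi_{xy}$ is really the tubular neighborhood projection onto $\phi_{xy}(U'_{xy})$, while the transversality of the quotient section is what ensures $(s_y|_{W_{xy}})^{-1}(0)\subset\phi_{xy}(U'_{xy})$ --- these are related but not the same application of the implicit function theorem, and the order in which $W_{xy}$, $\pi_{xy}$ and $\tilde E_{xy}$ are produced in \ref{LEVELONECC} is slightly different from what you wrote. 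Second, in your part (B) you propose to ``adjoin enough new charts of $\mathcal{K}'_X$'' and order the old index set below the new one; the paper's description (and the statement of the theorem itself) keeps the same index set $S$ and builds $\hat{\mathcal{G}}_X$ from the charts $C'_x$ of $\mathcal{K}'_X$ for $x\in S$ together with the coordinate changes supplied by $\mathcal{K}'_X$, so that the embedding $\tilde{\mathcal{G}}_X\Rightarrow\hat{\mathcal{G}}_X$ is indexed by the identity on $S$. Your variant may also be made to work, but it is not what is being asserted.
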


\begin{remark} Philosophically speaking, a level-1 embedding between two Kuranishi structures contains the data of a \textbf{global left inverse} for the underlying embedding (forgetting the level-1 structure). Using the terminology of \ref{THEMAP} and \ref{EXAMPLEMAP} (5) and (1), if using $\Phi:\mathcal{G}_X\Rightarrow\tilde{\mathcal{G}}_X$ to denote the underlying embedding between the underlying good coordinate systems for a level-1 embedding $\mathcal{G}_X^{\text{level-1}}\overset{\text{level-1}}{\Rightarrow}\tilde{\mathcal{G}}_X^{\text{level-1}}$ between level-1 good coordinate systems, then the level-1 structure for the embedding provides a map $\Pi: \tilde{\mathcal{G}}_X\to\mathcal{G}_X$ (see \cite{DI} 7.27 for the data in a level-1 embedding) such that $\Pi\circ \Phi=Id_{\mathcal{G}_X}$.
\end{remark}

\begin{remark} I thank Ono for pointing out to me that a data (described for a single coordinate change) similar to a part of a level-1 structure has been used in proof of \cite{FO} 6.4 as an auxiliary tool to construct a virtual fundamental chain without being given any name. A level-1 structure is very useful as we will see later in the article. Another use is that one can use it to do a single (multisectional) perturbation for the entire moduli space with a finer structure (e.g. boundaries being fiber products in Lagrangian Floer theory).
\end{remark}

Having motivated the notions of refinement, R-equivalence and level-1 structure, we will discuss Kuranishi structures from perspectives of FOOO \cite{FOOO}, \cite{FOOONEW}, \cite{FOOONEWER} and my version \cite{DI}, \cite{MAXCAT} and some comparisons and interactions between them. I emphasize that one also gets a complete theory by FOOO's \cite{FOOO}, \cite{FOOONEW}, \cite{FOOONEWER} and \cite{FOOOCF}, but my version complements it well and offers precise structures to facilitate and sometimes simplify certain viewpoints/description/constructions.

First come some philosophical points. There are \textbf{two presentations of a global compatible system of local finite dimensional reductions}: Kuranishi structures and good coordinate systems. The reasons Kuranishi structures are chosen as starting point by \cite{FO} and \cite{FOOO}, to me at least, are the following\footnote{In a future paper, I will show that the Kuranshi structures with the maximality and topological matching conditions enjoy nice properties of both Kuranishi structures and good coordinate systems in this regard. Essentially, one does not need to take a finite cover, but just grouping and ungrouping the index set $X$ of the ordered Kuranishi structure obtained by a shrinking, see the proof of \ref{GCSWOIND} for a definition/construction, and slightly generalizing the theory in \cite{DI} we can achieve a level-1 Kuranishi structure on a precompact shrinking/chart-refinement (if $X$ is compact). This is a key technical input for the future work of constructing a category of Kuranishi structures without the extra level-1 type data (with objects being honest virtual structures, not R-equivalence classes).}:
\begin{enumerate}
\item the situation about every point (the chart based at it and coordinate changes from nearby to the chart of this point) is at the same footing,
\item the fiber product of Kuranishi structures (with weakly submersive maps to a common manifold, for simplicity to illustrate) is simple and natural to define by just chartwise fiber products with fiber product coordinate changes among them (very important in dealing with moduli space with corners and with finer structures),
\item having a common chart-refinement is an equivalence relation for Kuranishi structures, but is \textbf{not} an equivalence relation for good coordinate systems, and
\item less arbitrary data (which can always be constructed if needed) in the description of Kuranishi structures (if considering up to chart-equivalence, related to points 1 and 3 above); and even when one has a good coordinate system, to have better properties, sometimes it is helpful to go to the induced Kuranishi structure and work there.
\end{enumerate}

However, to do certain constructions and perturbations, it is easier to deal with finitely many charts in an ordered cover. We need to be able to do inductive construction and need to do fiber product in the induction (in moduli spaces of Lagrangian Floer theory or SFT for example), so it is essential to go back and forth between these two notions of Kuranishi structures and good coordinate systems, at least partially. One can always easily induce a Kuranishi structure from a good coordinate system, e.g. \cite{DI} 2.11. A key construction is to go from a Kuranishi structure to a good coordinate sytem. In \cite{DI}, I introduced and imposed two natural conditions on Kuranishi structures, the upshot is that with these two conditions, it is extremely simple to go from a Kuranishi structure to a good coordinate system without the need of induction, and go to from a good coordinate system to a Hausdorff good coordinate system, on which one can do further construction, we just simply take a sufficiently small precompact shrinking also without induction (see the proof of \cite{DI} 4.5 and 4.19, and this nature of the latter is made explicit in \cite{DI} compared to \cite{Dingyu}). See \ref{GCSWOIND} below.

\begin{definition}\label{MAXMATCHING} We define some natural conditions for a Kuranishi structure $\mathcal{K}$. We omit the obvious corresponding notions for a \textbf{good coordinate system}. 
\begin{enumerate}
\item (maximality condition, \cite{DI}) Let $\mathcal{K}_X$ be a Kuranishi structure in \ref{KURANISHISTRUCTURE}. $\mathcal{K}_X$ is \textbf{maximal} or satisfies the \textbf{maximality condition} if for every $C_q\to C_p$ (so $q\in X_p$), if $z\in V_q$, $z'\in V_p$, and $\underline{z}$ and $\underline{z'}$ are connected via a finite sequence of quotiented coordinate changes each of which can be in either direction, denoted by $\underline{z}\sim \underline{z'}$, then $z\in V_{pq}$ and $\underline{\phi_{pq}(z)}=\underline{z'}$. Recall that $V_{pq}$ is the domain of the coordinate change $C_q\to C_p$ specified in $\mathcal{K}_X$.
\item ($X$-maximality, \cite{DKSG}, \cite{MAXCAT}) A Kuranishi structure $\mathcal{K}_X$ in \ref{KURANISHISTRUCTURE} is \textbf{$X$-maximal} or satisfies the \textbf{$X$-maximality condition} if $$X_q|_{V_{pq}}:=\psi_q(\underline{(s_q|_{V_{pq}})^{-1}(0)})=X_q\cap X_p.$$ In particular, a maximal Kuranishi structure is $X$-maximal.
\item (topological matching condition, \cite{Dingyu}, \cite{DI}) Let $\mathcal{K}_X$ be a Kuranishi structure in \ref{KURANISHISTRUCTURE} and let $\sim$ be the relation defined in \ref{MAXMATCHING} (1). $\mathcal{K}_X$ is \textbf{topologically matching} or satisfies the \textbf{topological matching condition} if for any two charts $C_p$ and $C_{p'}$, where $p$ and $p'$ are any two points in $X$, and any pair $v\in \underline{s_p^{-1}(0)}$ and $v'\in \underline{s_{p'}^{-1}(0)}$ such that there exist a sequence $v_n\in \underline{V_{p}}$ and a sequence $v'_n\in \underline{V_{p'}}$ satisfying $v_n\sim v_n'$, $v_n\to v$ in $\underline{V_p}$, and $v'_n\to v'$ in $\underline{V_{p'}}$, we have $v\sim v'$.
\end{enumerate}
\end{definition}

\begin{remark}\label{INHERITANCE} A shrinking of and a good coordinate system of a Kuranishi structure will inherit these conditions. A good coordinate system is $X$-maximal by definition (or it would be useless). \cite{DI} 1.17 can also be useful conceptually but is not much used.
\end{remark}

\begin{theorem}\label{GCSWOIND} (existence of (Hausdorff) good coordinate system, \cite{DI}) Let $\mathcal{K}_X$ be a Kuranishi structure \ref{KURANISHISTRUCTURE} with the maximality and topological matching conditions. There exists a good coordinate system $\mathcal{G}_X$ associated to $\mathcal{K}_X$ (constructed from data of $\mathcal{K}_X$ and the induced Kuranishi structure $\mathcal{K}(\mathcal{G}_X)$ refines $\mathcal{K}_X$). A sufficiently small precompact shrinking $\tilde{\mathcal{G}}_X$ of $\mathcal{G}_X$ without changing $(S,\leq)$ is Hausdorff. In addition (not important for the perturbation theory), by construction and definition of shrinking, we have $X_y|_{\tilde{U}_y}\cap X_x|_{\tilde U_x}\not=\emptyset$ in $\tilde{\mathcal{G}}_X$ (if and only if $X_y|_{U_y}\cap U_x|_{U_x}\not=\emptyset$ in $\mathcal{G}_X$), then $y\in X_x$ or $x\in X_y$ in the original $\mathcal{K}_X$. (We remark that the `$\leq$' constructed respects the dimension, which is important for grouping, see \cite{DI} subsection 7.1.)
\end{theorem}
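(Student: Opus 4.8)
The plan is to carry out the three steps flagged in the footnote to \ref{GCSWOIND}: chartwise rearrange $\mathcal{K}_X$ so that a partial order is visible, extract a finite subcover by compactness to obtain the good coordinate system $\mathcal{G}_X$, and then pass to a sufficiently small precompact shrinking to kill non-Hausdorffness. No induction over an exhausting filtration of $X$ is needed; the maximality and topological matching conditions \ref{MAXMATCHING} are exactly what replace that induction by a single compactness argument.

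\emph{Ordering and the finite subcover.} First I would replace $\mathcal{K}_X$ by a shrinking with precompact charts; this is harmless because by \ref{INHERITANCE} a shrinking inherits the maximality and topological matching conditions. The local fact driving the ordering is: if $C_q\to C_p$ is a coordinate change of $\mathcal{K}_X$ with $\dim V_q=\dim V_p$, then since the virtual dimension $\dim V_\bullet-\operatorname{rk}E_\bullet$ is constant one also gets $\operatorname{rk}E_q=\operatorname{rk}E_p$, so $\phi_{pq}$ is an injective immersion between manifolds of equal dimension, hence open onto its image, and $\hat\phi_{pq}$ is a fiberwise isomorphism onto $E_p|_{\phi_{pq}(V_{pq})}$; thus equal-dimension coordinate changes are, up to the group action, honest identifications of open pieces. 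Now use compactness of $X$ and the open cover $\{X_p\}_{p\in X}$ to pick a finite $S=\{p_1,\dots,p_N\}\subset X$ with $X=\bigcup_i X_{p_i}$, and put on $S$ the preorder $p_i\leq p_j\iff\dim V_{p_i}\leq\dim V_{p_j}$, refined inside each dimension block to a total order by an arbitrary choice. This is the asserted $\leq$ that respects dimension (a total order without antisymmetry).

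\emph{Building the coordinate changes and $\mathcal{G}_X$.} The substantive step is to produce, for $p_i\leq p_j$ with $X_{p_i}\cap X_{p_j}\neq\emptyset$, a coordinate change $C_{p_i}\to C_{p_j}$ with $X_{p_i}|_{U_{p_j p_i}}=X_{p_i}\cap X_{p_j}$ as in \ref{GCS}; note that in general neither $p_i\in X_{p_j}$ nor $p_j\in X_{p_i}$, so no coordinate change of $\mathcal{K}_X$ is directly available. For each $r\in X_{p_i}\cap X_{p_j}$ one has coordinate changes $C_r\to C_{p_i}$ and $C_r\to C_{p_j}$ of $\mathcal{K}_X$, and on the image of the lower-dimensional piece these compose into a partial identification of a piece of $V_{p_i}$ with a piece of $V_{p_j}$, together with its bundle lift. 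I would glue all of these, over all such $r$, into a single equivariant bundle embedding $\hat\phi_{p_j p_i}$ with maximal domain $U_{p_j p_i}$: the maximality condition is precisely what makes the various partial pieces mutually compatible — two points lying in domains of zigzagged coordinate changes are identified consistently, with consistent bundle data — so they patch; $X$-maximality gives $X_{p_i}|_{U_{p_j p_i}}=X_{p_i}\cap X_{p_j}$; and the tangent bundle condition and the triangle compatibility of the resulting family are inherited from those of $\mathcal{K}_X$ together with maximality. This yields the good coordinate system $\mathcal{G}_X$. That the induced Kuranishi structure $\mathcal{K}(\mathcal{G}_X)$ refines $\mathcal{K}_X$ is then a direct check: the charts of $\mathcal{G}_X$ are restrictions of charts of $\mathcal{K}_X$ and its coordinate changes were built out of those of $\mathcal{K}_X$, so a common shrinking embeds into both.

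\emph{Hausdorffness, the bookkeeping, and the main obstacle.} The realization $|\mathcal{G}_X|=\bigsqcup_{x\in S}U_x/\!\sim$ can fail to be Hausdorff only if two non-identified points in different charts are common limits of identified sequences; the topological matching condition says exactly that such limits remain identified, so the obstruction is confined to the edges of the (maximal, hence essentially closed) coordinate-change domains. Passing to a precompact shrinking $\tilde U_x$ of $U_x$ with $X=\bigcup_x X_x|_{\tilde U_x}$ still covering, chosen small enough by compactness, then separates every pair of non-identified points, so $\tilde{\mathcal{G}}_X$ is Hausdorff; only the chart domains shrink, so $(S,\leq)$ is untouched. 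The final bookkeeping — that $X_y|_{\tilde U_y}\cap X_x|_{\tilde U_x}\neq\emptyset$ in $\tilde{\mathcal{G}}_X$ is equivalent to the corresponding nonemptiness in $\mathcal{G}_X$ and forces $y\in X_x$ or $x\in X_y$ in the original $\mathcal{K}_X$ — follows by tracing the construction: a nonempty overlap arose from a common point $r\in X_{p_i}\cap X_{p_j}$, and chasing the chain of $\mathcal{K}_X$-coordinate changes used to build $C_{p_i}\to C_{p_j}$ shows the centre of the smaller-dimensional chart lies in the image of the larger. The step I expect to be the main obstacle is the gluing in the previous paragraph: making rigorous that the maximality condition collapses the a priori many partial compositions into one well-defined coordinate change of maximal domain that genuinely satisfies the tangent bundle condition and the cocycle relation, and extracting from topological matching precisely the closedness needed so that a \emph{finite} precompact shrinking suffices for Hausdorffness.
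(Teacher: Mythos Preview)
Your outline diverges from the paper's proof at the crucial step, and the divergence is exactly where you yourself flag ``the main obstacle''. The paper does \emph{not} glue coordinate changes through intermediate points $r$; it arranges things so that no gluing is ever needed.

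Here is the paper's trick. Fix a metric $d$ on $X$ and for each $p\in X$ choose $r_p>0$ with $B_{r_p}(p)\subset X_p$; then shrink by setting $V'_p:=V_p\setminus\text{quot}_p^{-1}(\psi_p^{-1}(X_p\setminus B_{r_p/2}(p)))$, so that the new footprint $X_p|_{V'_p}$ is the metric ball $B_{r_p/2}(p)$. Now if $q\leq p$ and $X_q|_{V'_q}\cap X_p|_{V'_p}\neq\emptyset$, pick $z$ in the intersection: the triangle inequality gives $d(q,p)\leq\frac{1}{2}(r_q+r_p)\leq\max\{r_q,r_p\}$, so either $q\in B_{r_p}(p)\subset X_p$ or $p\in B_{r_q}(q)\subset X_q$. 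In the first case a coordinate change $C_q\to C_p$ is already present in $\mathcal{K}_X$; in the second case the paper's order (which uses $\dim E_\bullet$ \emph{and} $|G_\bullet|$, not just $\dim V_\bullet$) forces $\dim E_p=\dim E_q$ and $|G_p|=|G_q|$, so $C_p\to C_q$ is invertible and its inverse furnishes the needed coordinate change. Only after this shrinking does one take a finite subcover. Maximality is then used to check $X$-maximality of the resulting $\mathcal{G}_X$, not to manufacture new coordinate changes.

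Your gluing approach, by contrast, has a genuine gap. For $r\in X_{p_i}\cap X_{p_j}$ the composite $\phi_{p_j r}\circ(\phi_{p_i r})^{-1}$ is only defined on the image $\phi_{p_i r}(V_{p_i r})$, which is a $\dim V_r$-dimensional submanifold of $V_{p_i}$; when $\dim V_r<\dim V_{p_i}$ this is not open in $V_{p_i}$. There is no mechanism in the Kuranishi structure axioms guaranteeing that as $r$ ranges over $X_{p_i}\cap X_{p_j}$ these images sweep out an open subset of $V_{p_i}$, nor that the partial maps extend off the union of such submanifolds to an honest embedding of an open set. Maximality controls compatibility of existing identifications, but does not produce the missing normal directions. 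The paper sidesteps this entirely by ensuring one centre always lies in the other's footprint, so a full-dimensional coordinate change from $\mathcal{K}_X$ is already available. This also makes the final bookkeeping claim ($y\in X_x$ or $x\in X_y$) immediate rather than something to be chased.
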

\begin{proof} We only show the very first part, which is a slight modification of \cite{DI} for better presentation in the sense of bringing closer to the analogy to a compact topological space. We can define a partial order on $X$ by $q\leq p$ if $\text{dim} E_q< E_p$ or ``$\text{dim} E_q=\text{dim} E_p$ and $|G_q|\leq |G_p|$''. 

We claim that there is a shrinking $\mathcal{K}_X|_{\{V'_p\}_{p\in X}}$ of $\mathcal{K}_X$ (recall definition in \ref{EXAMPLEMAP} (3)) such that if $q\leq p$ and $X_q|_{V'_q}\cap X_p|_{V'_p}\not=\emptyset$, then $q\in X_p$ or ``$p\in X_q$ and $C_p\to C_q$ is a coordinate change with $|G_p|=|G_q|$ and $\text{dim} E_p=\text{dim} E_q$''. So in the first case we have $C_q|_{V'_q}\to C_p|_{V'_p}$ with the domain $V'_{pq}$ restricted from the original data of $\mathcal{K}_X$, and in the second case, we can invert the coordinate change $C_p\to C_q$ and restrict the latter  to have a coordinate change $C_q|_{V'_q}\to C_p|_{V'_p}$ with the domain $V'_{pq}:=\phi_{qp}(V'_{qp})$.

We now prove the claim: Since $X$ is metrizable, choose a metrc $d$ inducing its topology, to define an open metric ball $B_r(p)\subset X$. Since $X_p$ is open around $p$ in $X$, we can choose $r_p>0$ such that $B_{r_p}(p)\subset X_p$. Define $V'_p:=V_p\backslash \text{quot}_p^{-1}(\psi_p^{-1}(X_p\backslash B_{r_p/2}(p)))$ so that $X_p|_{V'_p}=B_{r_p/2}(p)$, where $\text{quot}_p: V_p\to \underline{V_p}$ is the quotient map. Here, $C_p|_{V'_p}$ is still $G_p$-invariant and $p\in X_p|_{V'_p}$.

Then if $q\leq p$ and $X_q|_{V'_q}\cap X_p|_{V'_p}\not=\emptyset$, then the latter contains some $z$ in $X$. Then $d(q,p)\leq d(q,z)+d(z,p)\leq \frac{1}{2}(r_q+r_p)\leq \max\{r_q, r_p\}$. Since we have $r_q\leq r_p$ or $r_p\leq r_q$, so we have $q\in B_{r_p}(p)\subset X_p$ or $p\in B_{r_q}(q)\subset X_q$. 

Suppose $q\not\in X_p$, then we have the alternative $p\in X_q$, so there exists a coordinate change $C_p\to C_q$ with an non-empty domain of the coordinate change. It implies that $|G_p|\leq |G_q|$ and $\text{dim} E_p\leq \text{dim} E_q$. Hypothesis says $q\leq p$, by definition, we must have in this case $\text{dim} E_p=\text{dim} E_q$ and $|G_q|\leq |G_p|$. Thus we also have $|G_p|=|G_q|$. This proves the claim. For easy future reference, such a $\mathcal{K}_X|_{(\{V'_p\}_{p\in X}, \leq)}$ is called an \textbf{ordered Kuranishi structure} restricted from $\mathcal{K}_X$.

Since $X$ is compact, we just take a finite cover with the induced $\leq$ and coordinate changes among the charts selected, and denote it by $\mathcal{G}_X$. Since $\mathcal{K}_X$ satisfies the maximality and topological matching condition in \ref{MAXMATCHING} (1) and (3), $\mathcal{G}_X$ also satisfies these conditions, see \ref{INHERITANCE}, and in particular $\mathcal{G}_X$ is $X$-maximal as in \ref{GCS} (3) (c.f. \ref{MAXMATCHING} (2)). Thus $\mathcal{G}_X$ is a good coordinate system as defined in \ref{GCS}.
\end{proof}

\begin{remark} An important distinction of Kuranishi structures compared to other variants: In the theory of Kuranishi structures, one can always go to a good coordinate system (to do perturbation or other constructions), even forgetting the data leading to the construction of Kuranishi structures (e.g. combinatorics of how local bundles are summed giving rise to global structure); and the data for Kuranishi structures is fairly minimal, which can be very useful in manipulating them in complicated situations.
\end{remark}

Theorem \ref{GCSWOIND}, theorem \ref{LEVEL1CATEXIST}, fiber product(-like) construction \cite{DI} 9.3 via tripling \cite{DI} 9.1, together with a few elementary observations, we can derive the following key result:

\begin{theorem}\label{REQUIVISEQUIV} (\cite{DI} 11.4) The R-equivalence of Kuranishi structures with the maximality and topological matching conditions \ref{MAXMATCHING} (1) and (3) is an equivalence relation. (Note that the definition of a Kuranishi structure in \cite{DI} has these two conditions built-in.)
\end{theorem}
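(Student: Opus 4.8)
The plan is to check the three axioms of an equivalence relation. Reflexivity is immediate, since every $\mathcal{K}_X$ refines itself, so it is a common refinement of $\mathcal{K}_X$ with itself; and symmetry is built into the definition, ``admitting a common refinement'' being visibly symmetric. All the content is therefore in transitivity: given a common refinement $\mathcal{L}_X$ of $\mathcal{K}^1_X$ and $\mathcal{K}^2_X$, and a common refinement $\mathcal{M}_X$ of $\mathcal{K}^2_X$ and $\mathcal{K}^3_X$, I must produce a common refinement of $\mathcal{K}^1_X$ and $\mathcal{K}^3_X$. First I would record a few elementary observations: embeddings compose to embeddings and restrict freely to shrinkings of their source, shrinkings compose, and consequently ``$\mathcal{A}_X$ refines $\mathcal{B}_X$'' is itself a transitive relation (compose the roofs $\mathcal{B}_X\overset{\sim}{\leftarrow}\cdot\Rightarrow\mathcal{A}_X$ and $\mathcal{C}_X\overset{\sim}{\leftarrow}\cdot\Rightarrow\mathcal{B}_X$ after shrinking the second left leg enough that its image lands inside the embedded shrinking of $\mathcal{B}_X$). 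With transitivity of refinement in hand, it suffices to build a single common refinement of $\mathcal{L}_X$ and $\mathcal{M}_X$; it will then refine $\mathcal{K}^1_X$ through $\mathcal{L}_X$ and $\mathcal{K}^3_X$ through $\mathcal{M}_X$. Passing to a common shrinking of $\mathcal{K}^2_X$, I may assume a fixed Kuranishi structure --- call it $\mathcal{K}^2_X$ again --- admits embeddings $\mathcal{K}^2_X\Rightarrow\mathcal{L}_X$ and $\mathcal{K}^2_X\Rightarrow\mathcal{M}_X$, and that $\mathcal{K}^2_X,\mathcal{L}_X,\mathcal{M}_X$ all satisfy the maximality and topological matching conditions \ref{MAXMATCHING} (1) and (3) (these are part of what it means to be an object here, and are inherited by shrinkings, \ref{INHERITANCE}).

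The point is now to construct the common refinement of $\mathcal{L}_X$ and $\mathcal{M}_X$ \emph{over} $\mathcal{K}^2_X$, for which I would invoke good coordinate systems and level-1 structures. By \ref{GCSWOIND} the relevant Kuranishi structures have associated (Hausdorff) good coordinate systems, and by \ref{LEVEL1CATEXIST}, after fixing a level-1 good coordinate system for $\mathcal{K}^2_X$, the two embeddings can be promoted to level-1 embeddings whose common source, after one further common level-1 shrinking, is a single level-1 good coordinate system $\mathcal{G}^{\mathrm{lv}1}_X$ for (a shrinking of) $\mathcal{K}^2_X$, with targets level-1 good coordinate systems $\mathcal{G}_{\mathcal{L}}$ for $\mathcal{L}_X$ and $\mathcal{G}_{\mathcal{M}}$ for $\mathcal{M}_X$. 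As noted in the remark after \ref{LEVEL1CATEXIST}, each level-1 embedding carries a \emph{global left inverse}: weakly submersive maps $\Pi_{\mathcal{L}}\colon\mathcal{G}_{\mathcal{L}}\to\mathcal{G}^{\mathrm{lv}1}_X$ and $\Pi_{\mathcal{M}}\colon\mathcal{G}_{\mathcal{M}}\to\mathcal{G}^{\mathrm{lv}1}_X$, assembled from the level-1 data (the submersions $\pi_{xy}$ with disk-like fibers, the fiberwise-isomorphic bundle maps $\tilde\pi_{xy}$ and the bundle projections $\hat\pi_{xy}$), satisfying $\Pi_{\mathcal{L}}\circ\Phi_{\mathcal{L}}=\mathrm{id}$ and $\Pi_{\mathcal{M}}\circ\Phi_{\mathcal{M}}=\mathrm{id}$ for the underlying embeddings $\Phi_{\mathcal{L}},\Phi_{\mathcal{M}}$.

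Finally I would form the fiber product $\mathcal{N}_X:=\mathcal{G}_{\mathcal{L}}\times_{\mathcal{G}^{\mathrm{lv}1}_X}\mathcal{G}_{\mathcal{M}}$ along the two weakly submersive maps $\Pi_{\mathcal{L}},\Pi_{\mathcal{M}}$, using the fiber-product-like construction of \cite{DI} 9.3 carried out via the tripling device of \cite{DI} 9.1 --- this is exactly what delivers an honest Kuranishi structure $\mathcal{N}_X$ that is Hausdorff and again satisfies \ref{MAXMATCHING} (1) and (3). Since $\Pi_{\mathcal{L}}$ and $\Pi_{\mathcal{M}}$ each restrict to the identity on the zero set $X$, $\mathcal{N}_X$ still realizes $X$ and has virtual dimension $\mathrm{vd}(\mathcal{L}_X)+\mathrm{vd}(\mathcal{M}_X)-\mathrm{vd}(\mathcal{K}^2_X)=\mathrm{vd}(\mathcal{K}^2_X)$; and the two projections $\mathcal{N}_X\to\mathcal{G}_{\mathcal{L}}$, $\mathcal{N}_X\to\mathcal{G}_{\mathcal{M}}$ are refinements, because $\ell\mapsto(\ell,\Phi_{\mathcal{M}}(\Pi_{\mathcal{L}}(\ell)))$ presents (a shrinking of) $\mathcal{G}_{\mathcal{L}}$ as the graph of $\Phi_{\mathcal{M}}\circ\Pi_{\mathcal{L}}$ inside $\mathcal{N}_X$ --- an embedding whose tangent bundle condition is controlled by the level-1 data --- and symmetrically for $\mathcal{G}_{\mathcal{M}}$. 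Hence $\mathcal{N}_X$ is a common refinement of $\mathcal{L}_X$ and $\mathcal{M}_X$, and by transitivity of refinement it is a common refinement of $\mathcal{K}^1_X$ and $\mathcal{K}^3_X$, completing the proof.

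I expect the main obstacle to be the fiber-product step itself, not the formal bookkeeping. A naive chartwise fiber product of good coordinate systems need not be Hausdorff, can fail the maximality and topological matching conditions, and its normal directions coming from the two factors may overlap in an uncontrolled way, so one cannot conclude the projections are refinements without extra structure. The level-1 structures from \ref{LEVEL1CATEXIST} supply precisely this structure --- they make $\Pi_{\mathcal{L}},\Pi_{\mathcal{M}}$ genuinely submersive with fiber-standard normal bundles and pin down explicit open regions on which the tangent bundle condition holds --- and the tripling construction of \cite{DI} 9.1 is the device that turns the resulting span into a fiber product that stays inside the class of Kuranishi structures satisfying \ref{MAXMATCHING} (1) and (3). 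Executing \cite{DI} 9.1 and 9.3 and verifying that these two conditions survive the construction is where essentially all the work lies; granting that, transitivity --- hence the theorem --- drops out from the elementary observations on composing and restricting embeddings, shrinkings and refinements.
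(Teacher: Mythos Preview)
Your proposal is correct and follows essentially the same approach the paper indicates: the paper does not give a detailed proof here but states just before the theorem that it is derived from Theorem~\ref{GCSWOIND}, Theorem~\ref{LEVEL1CATEXIST}, the fiber product(-like) construction \cite{DI}~9.3 via tripling \cite{DI}~9.1, together with a few elementary observations --- and your outline assembles precisely these ingredients in the expected way (reflexivity and symmetry trivial; transitivity via level-1 good coordinate systems, the global left inverses they provide, and the tripling/fiber-product roof construction over the middle structure).
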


One natural and important question is \textbf{how my version of Kuranishi structure with these two conditions in \cite{DI} and FOOO's are related}. This is what I have been thinking about for a while, and I especially want to have charts in a good coordinate system as global quotient and coordinate changes between them as global quotient as well (rather than orbifold section and orbibundle embedding as in \cite{FOOOCF}).

\begin{theorem}\label{FOOOISDIR} (\cite{MAXCAT}) Let $\mathcal{K}_X$ be a Kuranishi stucture in the sense of FOOO \cite{FOOO}, namely \ref{KURANISHISTRUCTURE}, then there exists a refinement $\mathcal{K}'_X$ in the sense of \ref{KURANISHIREFINEMENT} which satisfies the maximality and topological matching conditions \ref{MAXMATCHING} (1) and (3). Two such refinements are R-equivalent as Kuranishi structures with the maximality and topological matching conditions, and this implies that the R-equivalence for FOOO's Kuranishi structures (and respectively FOOO's good coordinate systems \ref{GCS}) is an equivalence relation. In particular, by applying \ref{GCSWOIND}, we can have a (Hausdorff) good coordinate system with global quotient in charts and global quotient in coordinate changes with the corresponding maximality and topological matching conditions, \ref{INHERITANCE}. Moreover, the level-1 theory in \cite{DI} 7.32, 7.36 applies to an FOOO's Kuranishi structure and the perturbation theory in \cite{DI} 8.6, 10.5 and 12.3 (e.g. \ref{LEVEL1CATEXIST}) applies to an oriented FOOO's Kuranishi structure (and its R-equivalence classes) respectively; and in particular, the perturbation theory factors through R-equivalence classes. Obtaining the maximality and topological matching conditions up to refinement also works for maps.
\end{theorem}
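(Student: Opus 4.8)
The plan is to concentrate the real work on the first assertion — producing, for an arbitrary FOOO Kuranishi structure $\mathcal{K}_X$, a refinement $\mathcal{K}'_X$ in the sense of \ref{KURANISHIREFINEMENT} satisfying \ref{MAXMATCHING} (1) and (3) — and then to read off the remaining assertions formally from it together with \ref{GCSWOIND}, \ref{LEVEL1CATEXIST} and \ref{REQUIVISEQUIV}. I would build $\mathcal{K}'_X$ in two stages. In the first stage I shrink: using compactness of $X$ exactly as in the proof of \ref{GCSWOIND}, I pass to a shrinking $\hat{\mathcal{K}}_X$ of $\mathcal{K}_X$ (see \ref{EXAMPLEMAP} (3)) arranged so that (i) $X_p\cap X_q\neq\emptyset$ forces $q\in X_p$ or $p\in X_q$; (ii) each $\hat V_p$ is a thin $G_p$-invariant tubular neighborhood of $s_p^{-1}(0)$ on the whole of which the tangent bundle condition of \ref{KURANISHISTRUCTURE} (2) holds — it holds near the zero set and is an open condition, so this is achieved by shrinking in the normal directions; (iii) the gluing relation $\sim$ of \ref{MAXMATCHING} (1), restricted to each $\underline{\hat V_p}$, is the identity; and (iv) the $X$-maximality and topological matching conditions \ref{MAXMATCHING} (2), (3) already hold for $\hat{\mathcal{K}}_X$. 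Conditions (i), (iii), (iv) are instances of the precompact-shrinking arguments already used to make good coordinate systems Hausdorff; note that (iii) is precisely the constraint the identity coordinate change $C_p\to C_p$ imposes once maximality is demanded.

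In the second stage I keep the chart domains $\hat V_p$ fixed and enlarge the coordinate changes: for $q\in X_p$ I would let $\mathcal{K}'_X$ have coordinate change $C_q\to C_p$ with domain $V_{pq}^{\mathrm{max}}$ the largest $G_q$-invariant open subset of $\hat V_q$ to which the germ of $\phi_{pq}$ propagates along chains of (partial) coordinate changes of $\hat{\mathcal{K}}_X$ — concretely the set of $z\in\hat V_q$ whose $\sim$-class meets $\underline{\hat V_p}$ — equipped with the map obtained by continuing $\phi_{pq}$ and $\hat\phi_{pq}$ along such chains. Then I must verify: (a) $V_{pq}^{\mathrm{max}}$ is open, $G_q$-invariant, and, by $X$-maximality, carries exactly the preimage of $X_q\cap X_p$ in the zero set; (b) the continued map is single-valued and an honest $G_q$-$G_p$-equivariant bundle embedding, single-valuedness coming from the cocycle condition of \ref{KURANISHISTRUCTURE} together with (iii) (which forces any two endpoints in $\underline{\hat V_p}$ of chains out of a given $z$ to coincide) and injectivity again from (iii); (c) the tangent bundle condition holds on all of $V_{pq}^{\mathrm{max}}$ thanks to the thinning (ii); (d) the cocycle compatibility of $\mathcal{K}'_X$ holds because its coordinate changes trace out the unchanged relation $\sim$ by construction. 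This makes $\mathcal{K}'_X$ a Kuranishi structure with the same global-quotient charts as $\hat{\mathcal{K}}_X$ and with maximal coordinate change domains, so it satisfies \ref{MAXMATCHING} (1), while \ref{MAXMATCHING} (3) is inherited from (iv); and $\mathcal{K}_X\overset{\sim}{\leftarrow}\hat{\mathcal{K}}_X\Rightarrow\mathcal{K}'_X$, with the embedding the identity on charts and the inclusion $V_{pq}\hookrightarrow V_{pq}^{\mathrm{max}}$ on domains, exhibits $\mathcal{K}'_X$ as a refinement of $\mathcal{K}_X$. I expect step (b) — checking that the globally propagated coordinate change remains a single-valued injective equivariant embedding, with no self-overlap and no non-Hausdorff pathology — to be the main obstacle, and the reason the careful shrinking (i), (iii) in stage one is indispensable.

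For uniqueness up to R-equivalence: if $\mathcal{K}'_X,\mathcal{K}''_X$ both refine $\mathcal{K}_X$ and both satisfy \ref{MAXMATCHING} (1), (3), then since fixing the chart domains forces the maximal coordinate change domains, the two differ only through their chart shrinkings of $\mathcal{K}_X$; I would take a common precompact shrinking $\hat{\mathcal{K}}_X$ of $\mathcal{K}_X$ fine enough to embed into both, run stage two on it to get a maximal $\mathcal{K}'''_X$, and check (using that the germ of $\sim$ on the common charts is the same in all three structures) that $\mathcal{K}'''_X$ refines both, giving an R-equivalence. Combining this with transitivity of refinement (a diagram chase composing chart-refinements and embeddings, as in \cite{DI}) and \ref{REQUIVISEQUIV}, one gets that R-equivalence of FOOO Kuranishi structures is an equivalence relation: for transitivity, refine the two given common refinements to maximal ones (still refining the relevant endpoints, by transitivity of refinement), apply the uniqueness statement to the two maximal refinements of the middle structure, and compose; the statement for FOOO good coordinate systems follows by passing to induced Kuranishi structures and back via \ref{GCSWOIND}. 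The remaining consequences are then immediate: $\mathcal{K}'_X$ has global-quotient charts and coordinate changes because $\mathcal{K}_X$ does and refinement preserves this, so \ref{GCSWOIND} applied to $\mathcal{K}'_X$ produces the asserted Hausdorff good coordinate system; \ref{LEVEL1CATEXIST} and the perturbation theory of \cite{DI} apply to $\mathcal{K}'_X$ verbatim, and since those perturbation invariants factor through R-equivalence classes (as in \ref{KEYREMARKS} and its Kuranishi analogue in \cite{DI}) they descend to $\mathcal{K}_X$ and its R-equivalence class; and running stages one and two on the target of a map $\mathcal{K}_X\to\tilde{\mathcal{K}}_Y$ first and then pulling back a compatible refinement of the source upgrades any map of FOOO Kuranishi structures to a map between maximal, topologically matching refinements.
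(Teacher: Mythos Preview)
The paper does not actually prove this theorem here; it is cited from \cite{MAXCAT} (in preparation), and the only content the paper supplies is the one-line hint immediately following the statement: ``Establishing the $X$-maximality condition \ref{MAXMATCHING} (2) up to refinement is the key to the above theorem.'' So there is no detailed proof to compare your proposal against, and your overall architecture --- shrink carefully, then extend coordinate-change domains along the relation $\sim$, then read off the remaining claims from \ref{GCSWOIND}, \ref{LEVEL1CATEXIST}, \ref{REQUIVISEQUIV} --- is a reasonable shape for an argument and is broadly consistent with the hint.

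That said, there is one point where your sketch and the paper's hint are in tension, and it is exactly the point the paper flags as the crux. In your stage one you assert (iv) that $X$-maximality can be arranged by a precompact shrinking, ``instances of the precompact-shrinking arguments already used to make good coordinate systems Hausdorff.'' But a shrinking in the sense of \ref{EXAMPLEMAP} (3) sets $V'_{pq}:=V'_q\cap\phi_{pq}^{-1}(V'_p)\subset V_{pq}$, so it can only make the coordinate-change domain \emph{smaller}; if the original $V_{pq}$ already fails to cover $X_q\cap X_p$ (which the FOOO definition \ref{KURANISHISTRUCTURE} permits --- it only asks $q\in X_q|_{V_{pq}}$), no shrinking of charts will repair this. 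In other words, $X$-maximality is not a Hausdorff-type separation property that shrinking improves; it is an \emph{extension} property for coordinate-change domains, and the paper's remark that this is ``the key'' is precisely a warning that it is not routine. Your stage two is where such extension happens, and one would expect $X$-maximality (and then full maximality) to be an \emph{output} of stage two rather than an input. But as written you invoke $X$-maximality of $\hat{\mathcal{K}}_X$ in checking property (a) of stage two, so the circularity matters. The likely fix is to drop (iv) from stage one and instead argue directly that $V_{pq}^{\mathrm{max}}$ covers $X_q\cap X_p$ because any two zero-set points with the same image in $X$ are $\sim$-related through intermediate charts; but that last assertion is itself not free --- it is essentially the content of achieving $X$-maximality up to refinement --- and you should expect it to require an inductive or cover-based construction rather than a one-line shrinking.
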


Establishing the $X$-maximality condition \ref{MAXMATCHING} (2) up to refinement is the key to the above theorem.

To have a good coordinate system with global quotient in charts and global quotient in coordinate changes is useful in several ways. Going from an FOOO's Kuranishi structure without the maximality and topological matching conditions to a good coordinate system, more directly one has charts being orbifold sections and coordinate change being orbibundle embeddings. To have the same category of Kuranishi charts and coordinate changes (this is necessary if one wants to switch between descriptions using Kuranishi structures for constructing fiber products and good coordinate systems for other constructions/perturbation), one naturally needs to define a Kuranishi structure as having orbifold sections in Kuranishi charts and orbibundle embeddings in coordinate changes. This poses one extra layer of structure but no extra conceptual difficulty (and sometimes sheaf theory is needed), see \cite{DI} 1.3 (4) and \cite{FOOOCF}. But \cite{FOOO} and \cite{DI} are written using global action for charts and coordinate changes\footnote{We often use grouping of the index set in \cite{DI} for constructions, but the theory is set-up in a way, we can group and recover the index set later after we finished the constructions, and in the end we produce some extra structure on charts in a good coordinate system indexed by the original index set.}, and of course, one can go back and rephrase or re-interpret everything using orbifold sections and orbibundle embeddings, or one can just invoke \ref{FOOOISDIR}. In particular, one can go from a Kuranishi structure defined in \cite{FOOO} to a good coordinate system defined in \cite{FOOO}. Moreover, this good coordinate system with global action of charts and coordinate changes can be used as starting points in other variants of Kuranishi structures and is more directly related to others so that different approaches can be directly compared, see section \ref{UNIFICATION}. One might not be comfortable with the fact that changing from $\mathcal{K}_X$ to its refinement can change dimensions of Kuranishi charts as well, but the induced Kuranishi structure $\mathcal{K}(\mathcal{G}_X)$ from an associated good coordinate system $\mathcal{G}_X$ of $\mathcal{K}_X$ is only a refinement of $\mathcal{K}_X$ anyway, and also perturbation and level-1 theory factors through refinement and R-equivalence in any case, see \cite{DI}.

In particular, we have:

$\{[\mathcal{K}^{\text{FOOO}}_X]_R\}\overset{I}{\to} \{[\mathcal{K}^{\text{maxmatching}}_X]_R\}\overset{II}{\to} \{[\mathcal{G}^{\text{Hausdorff}}_X]_R\}\overset{III}{\to} \{[\mathcal{G}^{\text{level-1}}_X]_R\}.$

Here, $[\cdot]$ stands for R-equivalence class and we denote a Kuranishi structure with maximality and topological matching conditions by $\mathcal{K}^{\text{maxmatching}}_X$. We describe the construction going from one structure to another, which only becomes a map after taking R-equivalence classes for the target structures (except for the inverse construction of $I$ and $III$). Construction $I$ is \ref{FOOOISDIR} and its inverse construction is inclusion (forgetting the maximality and topological matching condition). $II$ is \cite{DI} 3.1 or \ref{GCSWOIND}, and its inverse construction is inducing, see for example, \cite{DI} 2.11. The composition of $I$ and $II$ is also given in \cite{FOOONEW} and \cite{FOOONEWER} if the good coordinate system has orbifold sections as Kuranishi charts with coordinate changes being orbibundle embeddings. $III$ is \ref{LEVEL1CATEXIST}, see \cite{Dingyu} 4.4, 7.32 and 7.36, and the inverse construction is forgetting the level-1 structure. There is an appropriate notion of a morphism between two structure R-equivalence classes of the same kind so that each set above becomes a category (see \cite{MAXCAT}), and each construction above (also its inverse construction) considered as a map from a type of structure R-equivalence classes in the domain to another type in the target becomes a functor, and is actually the identity functor. Thus each functor induced from the construction and the functor of its inverse construction are mutual two-sided inverses. Therefore, we have:

\begin{theorem}\label{ALLTHESAME} (\cite{MAXCAT}) We have functors identifying R-equivalence classes between categories.
$$\{[\mathcal{K}^{\text{FOOO}}_X]_R\}\overset{I}{\leftrightarrow} \{[\mathcal{K}^{\text{maxmatching}}_X]_R\}\overset{II}{\leftrightarrow} \{[\mathcal{G}^{\text{Hausdorff}}_X]_R\}\overset{III}{\leftrightarrow} \{[\mathcal{G}^{\text{level-1}}_X]_R\}.$$
\end{theorem}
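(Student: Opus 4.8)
The plan is to verify, for each of the three constructions $I$, $II$, $III$ and for its inverse construction, that (a) it descends to a well-defined map on R-equivalence classes, (b) it is functorial for the morphisms of R-germs from \cite{MAXCAT}, and (c) each construction composed with its inverse construction is the identity functor on R-equivalence classes; the theorem then follows, since a functor with a two-sided inverse is an equivalence (here in fact an isomorphism) of categories. Most of the substantive content is already packaged: $I$ is \ref{FOOOISDIR}, $II$ is \ref{GCSWOIND}, $III$ is \ref{LEVEL1CATEXIST}, and the fact that R-equivalence is a genuine equivalence relation on each of the four kinds of object is \ref{REQUIVISEQUIV} together with \ref{FOOOISDIR} (and the corresponding statements for good coordinate systems and level-1 good coordinate systems).

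First I would pin down that each construction is well-defined on R-germs. For $I$: given $\mathcal{K}^{\text{FOOO}}_X$, \ref{FOOOISDIR} produces a refinement $\mathcal{K}'_X$ with the maximality and topological matching conditions, and any two such refinements are R-equivalent as $\mathcal{K}^{\text{maxmatching}}$ objects; moreover if $\mathcal{K}^{\text{FOOO}}_X$ and $\tilde{\mathcal{K}}^{\text{FOOO}}_X$ are R-equivalent, a common refinement can itself be refined to one with the two conditions, so their images agree as R-germs. For $II$: \ref{GCSWOIND} gives $\mathcal{G}_X$ with $\mathcal{K}(\mathcal{G}_X)$ refining $\mathcal{K}_X$, and different choices of finite subcover and precompact shrinking produce R-equivalent good coordinate systems via a common refinement, using \ref{EXAMPLEMAP} and the refinement theory of good coordinate systems in \cite{DI}. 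For $III$: \ref{LEVEL1CATEXIST} gives a level-1 good coordinate system for $\mathcal{K}_X$ (equivalently for a good coordinate system with the two conditions), and two choices of level-1 structure are compared through a common level-1 refinement. The inverse constructions are forgetting the two conditions (inclusion), inducing a Kuranishi structure from a good coordinate system (\cite{DI} 2.11), and forgetting the level-1 structure; each visibly respects R-equivalence.

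Next I would check the round-trip identities at the level of R-germs. For $I$: $\mathcal{K}'_X$ refines $\mathcal{K}^{\text{FOOO}}_X$, so $[\mathcal{K}'_X]_R=[\mathcal{K}^{\text{FOOO}}_X]_R$; conversely, forgetting the conditions on a $\mathcal{K}^{\text{maxmatching}}_X$ and re-applying $I$ yields a refinement of it, hence the same R-germ --- so $I^{-1}\circ I$ and $I\circ I^{-1}$ are both identities. For $II$: $\mathcal{K}(\mathcal{G}_X)$ refines $\mathcal{K}_X$ so $[\mathcal{K}(\mathcal{G}_X)]_R=[\mathcal{K}_X]_R$; and starting from a good coordinate system $\mathcal{G}_X$, inducing $\mathcal{K}(\mathcal{G}_X)$ and then running \ref{GCSWOIND} produces a good coordinate system whose induced Kuranishi structure refines $\mathcal{K}(\mathcal{G}_X)$, and one checks this new good coordinate system is R-equivalent to $\mathcal{G}_X$ through a common refinement (this uses the refinement theory for good coordinate systems in \cite{DI}). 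For $III$ both round trips are essentially trivial, since $III^{-1}$ literally forgets data that $III$ adds and \ref{LEVEL1CATEXIST} guarantees existence on a shrinking with the same (grouped) index set. For functoriality, a morphism of R-germs is represented by a localized class of maps/embeddings/submersions between representatives as in \ref{THEMAP} and \ref{EXAMPLEMAP}; one checks that each construction and its inverse carries representing data to representing data compatibly with composition --- here \ref{LEVEL1CATEXIST} again supplies, for an embedding of Kuranishi structures, a compatible level-1 embedding between associated level-1 good coordinate systems, which is exactly what makes $III$ functorial, and \ref{FOOOISDIR} records the analogue for maps.

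The main obstacle is not any single one of these diagram chases but the foundational input underlying all of them: that R-equivalence is transitive on each of the four kinds of object (and on maps), so that ``common refinement'' is a legitimate equivalence relation and the R-germs and their morphisms are defined at all. This is the hard theorem \ref{REQUIVISEQUIV}/\ref{FOOOISDIR}, whose proof rests on the level-1 structure machinery of \cite{DI} --- roof and tripling constructions performed through base submersions and bundle projections. Once transitivity is in hand, the rest is bookkeeping: verifying that each construction commutes with refinement (so it descends), that round trips land back in the same R-germ (refinement in one direction, induce-and-reconstruct in the other), and that morphisms are transported compatibly --- and then the displayed chain of ``identity'' functors and their inverses yields the asserted equivalences.
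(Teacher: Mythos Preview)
Your proposal is correct and follows essentially the same approach as the paper: the paper's own argument is the discussion paragraph immediately preceding the theorem, which identifies $I$, $II$, $III$ with \ref{FOOOISDIR}, \ref{GCSWOIND}, \ref{LEVEL1CATEXIST}, names the inverse constructions (inclusion, inducing via \cite{DI} 2.11, forgetting the level-1 data), and asserts that each becomes the identity functor on R-equivalence classes once morphisms of R-germs are set up in \cite{MAXCAT}. You have simply unpacked this outline---checking well-definedness on R-germs, the round-trip identities, and functoriality for maps/embeddings---and correctly flagged transitivity of R-equivalence (\ref{REQUIVISEQUIV}, extended via \ref{FOOOISDIR}) as the substantive input on which everything rests.
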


\subsection{Identification on the nose of FOOO's inductive chartwise perturbation and CF-perturbation via a level-1 structure}\label{CWPISCF}

Suppose that there is a recipe to do inductive chartwise perturbation on an oriented Hausdorff good coordinate system, so that at the end, one has a multisectionally perturbed section on each chart and coordinate changes intertwines such perturbed sections, so that the set that is covered by the quotients of the zero sets of multisectionally perturbed sections (in those charts) up to identifications is compact, and any two such perturbations are cobordant. If the Kuranishi structure is equipped with a map into an orbifold \ref{EXAMPLEMAP} (4), one then has a map from the zero set of the perturbed Kuranishi structure into this orbifold, and uses a triangulation (or a pseudocycle) and singular homology to obtain a rational singular chain/cycle of this orbifold for invariants. For example, this perturbation idea is explained in \cite{FO}, \cite{FOOO} together with more details in turning an FOOO's Kuranishi structure into a Hausdorff good coordinate system in \cite{FOOONEW} and \cite{FOOONEWER}. There is also a another way to perturb and extract invariants in de Rham cohomology via continuous family (or in abbreviation CF-) perturbation explained in detail in \cite{FOOOCF}.

Alternatively, starting from an FOOO's Kuranishi structure $\mathcal{K}_X$, one can apply \ref{FOOOISDIR} to get a refinement Kuranishi structure $\mathcal{K}'_X$ with the maximality and topological matching conditions \ref{MAXMATCHING} (1) and (3), then apply \ref{LEVEL1CATEXIST} (see also \ref{GCSWOIND}) and the theory explained in detail in \cite{DI} section 8 (via a level-1 structure, see \ref{LEVELONECC}) to get a compact perturbation constructed chartwise via induction, and in \cite{DI} 12.3 and 10.5 it is showed that two such perturbations starting from two R-equivalent Kuranishi structures (thus independent of choices via applying \ref{FOOOISDIR} to these two starting Kuranishi structures) if sufficiently small (general ones in the respective setting are cobordant to sufficiently small ones) can be lifted and compared in a common level-1 good coordinate system commonly refining the respective associated level-1 good coordinate systems and are cobordant there.

In fact, one can use a level-1 structure and perturbation method in \cite{DI} adapted to FOOO's Kuranishi structures and summarized in the previous paragraph to show that two perturbation methods of FOOO in the first paragraph of this subsection agree on the nose via the perturbation used in \cite{DI}. 

First, let us briefly recall how the compact chartwise perturbation in \cite{DI} section 8 is carried out. There, one first chooses a level-1 good coordinate system with charts $\{C_i|_{U'_i}\}_{i\leq N}$ and total order $\leq$ on $\{0,\cdots, N\}$ associated to a given Kuranishi structure (after applying \ref{FOOOISDIR}, \ref{GCSWOIND}, \ref{LEVEL1CATEXIST}, and \cite{DI} 7.1), then take a level-1 precompact shrinking $\{C_i|_{U''_i}\}$ of that and denote the domain of the coordinate change as $C_j|_{U'_j}\overset{\text{level-1}}{\to} C_i|_{U''_i}$ by $U^m_{ij}$ for $j\leq i-1$. In the inductive step, for each $j\leq i-1$, we already have a perturbation $\tau_j|_{U'_{ij}}$ on $U'_{ij}$ (if non-empty) restricted from a perturbation $\tau_j$ compactly supported in $U'_j$ such that the restriction of $\tilde s_j:=s_j+\sum_{n\leq j-1}(\pi_{jn}, \hat\pi_{jn})^\ast (\tau_n|_{U'_{jn}})+\tau_j$ to $U'_{ij}$ is transverse on $\overline{U''_{ij}}$. Here the lift $(\pi_{jn},\hat\pi_{jn})^\ast$ is constructed using the level-1 structure mapping a section of $E_n|_{U'_{jn}}$ to a section of $E_j|_{\text{orbit}(W'_{jn})}$ as in \cite{DI} 8.3 (essentially the same as the way towards the end of \ref{KEYREMARKS} (4)) followed by taking the orbit of $W'_{jn}$ in $U'_j$ under the morphism. We define $\tilde{\tilde{s}}_i:=s_i+\sum_{j\leq i-1}(\pi_{ij},\hat\pi_{ij})^\ast(\tau_j|_{U'_{ij}})$. Observe that $\tilde{\tilde {s}}_i|_{\text{orbit}(W'_{ij})}=(\pi_{ij},\hat\pi_{ij})^\ast(\tilde s_j|_{U'_{ij}})$ for $j\leq i-1$, and $\tilde{\tilde {s}}_i$ is already transverse on $\overline{\bigcup_{j\leq i-1}\text{orbit}({W''_{ij}})}$. Then we can perturb $\tilde s_i$ by an invariant multisection supported on $$(U''_i\cup\bigcup_{j+1\leq k\leq N} U^m_{ki})\backslash\overline{\bigcup_{j\leq i-1} \text{orbit}(W''_{ij})}$$ such that $\tilde s_i:=\tilde{\tilde {s}}_i+\tau_i$ is transverse on $\overline{U''_i}$. Here, the genericity trick to choose $\tau_i$ in \cite{DI} 8.5 is based on \ref{DIMREDEX}. When the induction terminates after step $N$, we restrict the perturbed section in each chart to $U''_i$\footnote{So that the transversality holds chartwise and the perturbed sections are compatible.} to obtain a perturbation for $\{C_i|_{U''_i}\}_{i\leq N}$. So basically one uses a level-1 structure to lift the previously constructed perturbations in the lower ordered charts and then extend it, and we do not change the constructions in the previous steps.

We now identify two FOOO's perturbation theories via the perturbation in the last paragraph. For a level-1 good coordinate system obtained, one can take a level-1 chart-refinement (with a possibly larger index set) so that $E_x$'s and $\tilde E_{xy}$'s are trivial bundles invariant under group actions. Choose a level-1 precompact shrinking. (A) Using a finite dimensional classical analogue of \ref{PFDFDRED} where $f$ is just $s_x$ (a section in a finite dimension bundle with group action as a special case of sc-Fredholm section) and $L_x$ is just $E_x$, $s_x$ is itself a finite dimensional reduction; and (B) one can also apply \ref{STABILIZATION} using now trivial $E_x$ for $L_x$ in the CF-perturbation for each chart; and recall \ref{SAMEFDRED}, both (A) and (B) agree exactly. For (B), denote $E_x:=\Psi_x(U'_x\times F_x)$, and recall the notation in \ref{STABILIZATION}, $\mathbf{s}_x^{F_x}: U'_x\times F_x\to pr_1^\ast E_x, (z,v)\mapsto \mathbf{s}_x^{F_x}(z,v):=s_x(z)+\Psi_x(z,v)$. Let $\mathfrak{s}^\epsilon_x(z,v):=\mathbf{s}^{F_x}(z,\epsilon v)$. Then $\mathcal{S}_x:=(F_x, \omega_x, \mathfrak{s}^\epsilon_x)$ for some appropriate choice of $\omega_x$ on $F_x$ will be a CF-perturbation for $C_x|_{U'_x}$ in the sense of \cite{FOOOCF} 7.3. Note that we can relax $\omega_x$ to depend on the base $U'_x$ direction and only ask it to be of degree $\text{rank} E_x$ and integrate into 1 along each fiber, and generalize \cite{FOOOCF} 7.5 slightly to allow a projection of the form $\Pi: U'_x\times F'_x\to U'_x\times F''_x$ covering identity $U'_x\to U'_x$ and linear in the fiber direction depending on the $U'_x$ coordinate. In fact, we can relax even further, and let $\mathbf{s}_x$ to be defined on a non-trivial bundle by the same formula which only uses the fiberwise linear structure, and the preimage image of the zero section still makes sense. So we can forget about $\Psi_x$, and write $\mathfrak{s}_x^\epsilon: E_x\to E_x, (z,v)\mapsto s_x(z)+(z,\epsilon v)$\footnote{We do not need $\epsilon\not=1$ as the tangent bundle condition is achieved on the whole $W'_{xy}$.}, and we can group index and replace subscript by $i$. Observe that we can construct $\omega_i$ and hence $\mathcal{S}_i$ inductively for the level-1 good coordinate system of charts $C_i|_{U''_i}$ such that $\mathcal{S}_i|_{W''_{ij}}$ is equivalent to $\hat\pi_{ij}^\ast(\mathcal{S}_i|_{W''_{ij}})$ via $\hat\pi_{ij}$ in the above generalized version of \cite{FOOOCF} 7.5, and $\hat\pi_{ij}^\ast(\mathcal{S}_i|_{W''_{ij}})$ can pull back to $\mathcal{S}_j|_{U''_{ij}}$ via $\tilde\pi_{ij}$ as in \cite{FOOOCF} 7.41, where $\hat\pi_{ij}$ and $\tilde\pi_{ij}$ are data in the level-1 structure \ref{LEVELONECC}.\footnote{Those statements hold true for the primed version as well except for the $\omega_i$ part.} We now use the compatibility of the level-1 structure to construct $\omega_i$. In the inductive step, we have $\omega_j$, $j\leq i-1$ already. We can pullback $\omega_{j}$ to $\hat\pi_{ij}^\ast(\tilde\pi_{ij}^\ast\omega_j)$, and wedge it with a highest degree invariant form supported near 0 in $\ker\hat\pi_{ij}$ that integrates to 1 along each fiber, and denote the end result as $\omega'_{ij}$, and we choose a $\omega'_i$ on $U'_i$ as well. We choose a partition of unity of the cover $\{\text{orbit}(W'_{ij}), j\leq i-1, U'_i\}$ where the cut-off function $\lambda_{ii}$ for $U'_i$ is supported away from $\text{orbit}(W^{w}_{ij})$, where $W^{w}_{ij}$ is associated to the level-1 coordinate change from $C_j|_{U''_j}\overset{\text{level-1}}{\to} C_i|_{U'_i}$, where $\{C_i|_{U''_i}\}_{i\leq N}$ is the grouped level-1 precompact shrinking. Define $\omega_i:=\sum_{j\leq i-1}\lambda_{ij}\omega'_{ij}+\lambda_{ii}\omega'_i$, which is the missing piece for $\mathcal{S}_i|_{U''_i}$. We have constructed compatible CF-perturbations $\mathcal{S}_i|_{U''_i}, i\leq N$. The upshot is that CF-perturbations associated to these charts are globally compatible. 

To relate two FOOO's perturbations on the nose, we construct $\omega'_i$ from $\tilde s_i$ in the two paragraphs before. Using the local structure of the multisection $\tilde s_i$ and a partition of unity, we can construct $\text{rank}(E_x)$-form $\omega'_i$ to be supported very close to $\text{gr}(\tau_i)$ and integrate to $1$ along each fiber and locally decomposable into a sum, each term of which is supported near a branch and fiberwise integrates to the absolute value of its weight. The $\omega_i$ produced by the above recipe also satisfies this property. Intuitively, by letting form $\omega_i$ concentrate on the branches, a perturbation by lifting and extending via the level-1 structure and the same perturbation viewed as CF-perturbation using the projections provided by the level-1 structure give rise to the \textbf{same} perturbed solution space. To make it precise, if we have a map from this Kuranishi structure into an orbifold, we have an induced map from the perturbed solution space by restricting, and an induced map from $\mathcal{S}_i|_{U''_i}, i\leq N$ via pullback by $E_i|_{U''_i}\to U''_i$. If we triangulate the perturbed space such that any simplex will lie entirely in some chart, each singular chain corresponds to a de Rham chain (see \ref{DERHAMCHAIN}\footnote{We can either use a version of manifold with corners (where each chain is equipped with an embedding into some $\R^N$ so that its automorphism intertwining this data is trivial) or choose a fixed way to turn a standard $k$-simplex into $\R^k$.}) and different choices corresponding to different $\mathcal{S}_x$'s are equivalent by the formulation, agreement follows because the singular chain complex is quasi-isomorphic to the de Rham chain complex. Alternatively, one can integrate a de Rham form on orbifolds on the pushforward under the given map of a perturbed solution space, and compare it with \cite{FOOOCF} (7.6) and (7.36), and show the agreement. Both perturbations above are respectively examples of  FOOO's chartwise and CF perturbations and each agrees with other general perturbations in each framework by respective recipes of choice-independence.

\subsection{Forgetful construction from sc-Fredholm structures to Kuranishi structure R-equivalence classes}\label{FORGETSUBSECTION}

The forgetful construction, independence of choices and certain functorial properties were completed by the author in a write-up with full proofs on August 1, 2011. The detail in \cite{DII} is unchanged except we choose fiber-standard subbundles to which $f$ is in good position rather than in general position (details and proofs for this setting are almost identical). The intervening time has been dedicated to develop a theory of structures from the output of the forgetful construction in \cite{DI} and precisely relating it to FOOO's Kuranishi structures in \cite{MAXCAT}.

We will describe ideas in the forgetful construction, and see \cite{Dingyu} and \cite{DII} for details.

We start with any fixed sc-Fredholm section (functor) $f$ in a strong bundle ep-groupoid $E\to B$ such that the zero set in orbit space $\underline{f^{-1}(0)}$ is compact and homeomorphically identified with $X$. Note that the object space $B$ of the base is paracompact, second countable and metrizable. Thus $X$ involved is always compact metrizable. \textbf{We always use the same letter for the ep-groupoid as well as the object space for lighter notation; and this causes no confusion, since the construction will always be globally invariant collection of invariant local constructions.} We call the data $$\mathcal{P}_X:=(X, f:B\to E, \psi:\underline{f^{-1}(0)}\to X)$$ an \textbf{sc-Fredholm structure} or \textbf{polyfold Fredholm structure}. We can take a Morita-refinement $\Psi$ from a strong bundle ep-groupoid $E''$ to $E$ such that $\Psi$ induces a map between the zero sets in the orbit spaces that intertwines $\psi$'s. We can also shrink $B$ to an invariant open neighborhood $B'$ of $f^{-1}(0)$ in $B$, and consider the open inclusion $f|_{B'}$ to $f$. Recall from \ref{SCFREDEMBEDDING} (with the obvious adaptation to include $\psi$ in $\mathcal{P}_X$): A map of the form of a composition of a Morita-refinement followed by an open inclusion\footnote{Equivalently, a composition of a Morita-refinement followed by an open embedding; and see \ref{SCFREDEMBEDDING} for another trivially equivalent formulation.} is called a \textbf{chart-refinement}. We define $\mathcal{P}'_X$ to be a \textbf{refinement} of another $\mathcal{P}_X$ if there exists a diagram $\mathcal{P}_X\overset{\sim}{\leftarrow}\mathcal{P}''_X\Rightarrow\mathcal{P}'_X$, where the first arrow is a chart-refinement and the second arrow is an embedding. We define chart-equivalence, respectively \textbf{R-equivalence} as before by requiring to admit a common chart-refinement, respectively a common refinement. The construction of the forgetful map below will descend to the chart-equivalence class as well as the R-equivalence class of $\mathcal{P}_X$.

We will describe each stage of the construction and define some terminologies for easy reference later.

\begin{construction}\label{FORGETFULCONSTRUCTION} (\cite{Dingyu}, \cite{DII})
\begin{enumerate}[(i)]
\item For any $x\in f^{-1}(0)$, apply the first part of \ref{PFDFDRED} (1), we can get an invariant fiber-standard $L_x\to W_x$ with $W_x$ open in $B$ such that $f\pitchfork L_x$ (recall the shorthand in \ref{IGP} for $f$ being in good position to $L_x$) and choose an invariant open set $U_x\subset\overline{U}_x\subset W_x$ around $x$ (recall such $W_x$ forms a basis, see footnote \ref{FORMINGABASIS}). We make a globally invariant family of such locally invariant choices for all $x\in f^{-1}(0)$. Since $\underline{f^{-1}(0)}$ in the orbit space is compact, we can choose an invariant $I\subset f^{-1}(0)$ such that $J:=\underline{I}$ is finite, and $f^{-1}(0)\subset\bigcup_{x_i\in I} U_{x_i}$ (notice that this cover is locally finite). We call such a choice $\mathcal{D}:=\{(L_{x_i},W_{x_i}, U_{x_i})\}_{x_i\in I}$ a \textbf{presummable data}.
\item If $L_{x_i}$'s in $\mathcal{D}$ are fiberwise independent among themselves over the common bases, then $\mathcal{D}$ is called \textbf{summable}. In applications, summability can be achieved by changing $L_{x_i}$ slightly using consideration in the geometric context. For any fixed sc-Fredholm section $f$ in this abstract setting, it is hard to achieve it directly. But we can pass $f$ to $\mathbf{f}:=f^J: \mathbf{B}:=E^{\oplus J}\to \mathbf{E}:=((\pi_E)^{\oplus J})^\ast(E\oplus E^{\oplus J})$ as in \ref{SUMMABILITYTRICK}, which refines and is equivalent to $f$ as we discussed. Here we use $J$ to index the factors instead of $|J|$. We identify $f^{-1}(0)$ with $\mathbf{f}^{-1}(0)$ and use the same notation for its points and the points in $J$. Define $\mathbf{W}_{x_i}:=(\pi^{\oplus J})^{-1}(W_{x_i})$ and $\mathbf{U}_{x_i}:=(\pi^{\oplus J})^{-1}(U_{x_i})$. Denote $\mathbf{L}_{x_i}$ is the diagonal subbundle of the two copies of $(\pi^{\oplus J})^\ast L_{x_i}$ in the first and $\underline{x_i}$-th Whitney summands of $\mathbf{E}$ over $\mathbf{W}_{x_i}$. Note that $\mathbf{f}\pitchfork \mathbf{L}_{x_i}$ by choice, because $f\pitchfork L_{x_i}$. $\mathbf{D}:=\{(\mathbf{L}_{x_i}, \mathbf{W}_{x_i}, \mathbf{U}_{x_i})\}_{x_i\in I}$ is completely determined from $\mathcal{D}$ and is now summable (recall by definition, fiberwise independent among themselves over the common bases).
\item For each $x\in \mathbf{f}^{-1}(0)\cong f^{-1}(0)$, over an invariant open neighborhood $\mathbf{O}_{x_i}$ of $x$ in $\mathbf{B}$, we sum finitely many bundles to be a bigger bundle. We choose to include the bundle $\mathbf{L}_{x_i}$ to be summed if $\overline{\mathbf{U}}_{x_i}$ covers $x$. The point is that being closed, if $\overline{\mathbf{U}}_{x_i}$ does not cover $x$, then it does not cover any $y$ from a sufficiently small open neighborhood of $x$ in $\mathbf{B}$. So the subcollection of bundles to be summed for $x$ is no smaller than that for any zero $y$ sufficiently nearby, which leads to the coordinate change condition in Kuranishi structure \ref{KURANISHISTRUCTURE}. We want to do the finite dimensional reduction using the summed bundle, so we need to choose aforementioned $\mathbf{O}_x$ be an open set in $\mathbf{B}$, that is why (for $x\in \overline{\mathbf{U}}_{x_i}\backslash\textbf{U}_{x_i}$) we need some open neighborhood around (the boundary of) $\overline{\mathbf{U}}_{x_i}$, that is what we keep $\mathbf{W}_{x_i}$ for. To make the above more succinct, we denote $I_x$ for the subset of bundles to be summed for $x$, namely, $I_x:=\{x_i\in I\;|\; x\in\overline{\mathbf{U}}_{x_i}\}$. Just choose an invariant $\mathbf{O}_x$ around $x$ in $\bigcap_{x_i\in I_x}\mathbf{W}_{x_i}\bigcap\bigcap_{x_j\in I\backslash I_x}(\mathbf{B}\backslash \overline{\mathbf{U}}_{x_j})$, then for $y\in \mathbf{O}_x\cap \mathbf{f}^{-1}(0)$, we have $I_y\subset I_x$. We choose a globally invariant family of locally invariant $\mathbf{O}_x$, $x\in f^{-1}(0)$. Now we have picked data $\mathcal{O}:=\{\mathbf{O}_x\}_{x\in f^{-1}(0)}$. From the data $\mathcal{D}$ and $\mathcal{O}$, we can define a globally invariant family of invariant local fiber-standard bundles $\mathbf{H}_x:=\bigoplus_{x\in I_x}(\mathbf{L}_{x_i}|_{\mathbf{O}_x})\to \mathbf{O}_x$ for $x\in \mathbf{f}^{-1}(0)$. $\{\mathbf{H}_x\to\mathbf{O}_x\}_{x\in f^{-1}(0)}$ is called a \textbf{reducible presentation}.
\item $\mathbf{f}\pitchfork\mathbf{H}_x$ as $\mathbf{f}\pitchfork \mathbf{L}_{x_i}|_{\mathbf{O}_x}$ already. By applying the second part of \ref{PFDFDRED}. We get a $G_x$-invariant manifold with corners $V_x:=\{\mathbf{f}\in \mathbf{H}_x\}=((\mathbf{f}|_{\mathbf{O}_x})/\mathbf{H}_x)^{-1}(0)$ and the local finite dimensional reduction $s_x:=\mathbf{f}|_{\{\mathbf{f}\in \mathbf{H}_x\}}:V_x\to E_x:=\mathbf{H}_x|_{V_x}$ is the Kuranishi chart, except it is not indexed by $p:=\psi(\underline{x})\in X$ yet. We have the coordinate change $\hat \phi_{xy}$ covering $\phi_{xy}$ induced by the inclusion in $\mathbf{E}$ and $\mathbf{B}$ and the domain of the coordinate change is $V_{xy}:=\mathbf{O}_y\cap \mathbf{O}_x\cap \mathbf{f}^{-1}(0)$. By the invariance of the construction, the compatible data $$(\{\mathbf{f}^{-1}(0), \{(G_x, s_x)\}_{x\in \mathbf{f}^{-1}(0)},\{(\phi_{xy}, \hat\phi_{xy}, V_{xy})\}_{y\in \mathbf{O}_x\cap \mathbf{f}^{-1}(0)} \})$$
descends to $$\mathcal{K}(\mathcal{P}_X; \mathcal{D},\mathcal{O}):=(X,\{(G_p, s_p, \psi_p:=\psi|_{\underline{s_p^{-1}(0)}})_{p\in X}, \{(\phi_{pq},\hat\phi_{xy}, V_{xy})\}_{q\in X_p}).$$ By descending, I mean $V_p:=V_x$ for any fixed choice of $x\in \underline{x}\in \psi^{-1}(p)$, etc. Coordinate changes up to $G_p$ actions are immediate (and also natural from this perspective). This Kuranishi structure is maximal, topological matching and even Hausdorff, as $B$ is.
\item We make two non-canonical choices $\mathcal{D}$ and $\mathcal{O}$ in this construction. We examine the independence of choices. For a fixed $\mathcal{D}$, $\mathcal{K}(\mathcal{P}_X;\mathcal{D},\mathcal{O})$ and $\mathcal{K}(\mathcal{P}_X;\mathcal{D},\mathcal{O}')$ are both chart-refined by $\mathcal{K}(\mathcal{P}_X;\mathcal{D},\mathcal{O}\cap \mathcal{O}')$, here the notation $\mathcal{O}\cap \mathcal{O}':=\{\mathbf{O}_x\cap \mathbf{O}'_x\}_{x\in f^{-1}(0)}$. In general, $\mathcal{K}(\mathcal{P}_X;\mathcal{D},\mathcal{O})$ and $\mathcal{K}(\mathcal{P}_X;\mathcal{D}',\mathcal{O}')$ are refined by $\mathcal{K}(\mathcal{P}_X;\mathcal{D}\bigsqcup\mathcal{D}',\mathcal{O}''\subset \Pi^{-1}\mathcal{O}\cap (\Pi')^{-1} \mathcal{O}')$. Here $\mathcal{D}\bigsqcup\mathcal{D}'$ means we take disjoint union of presummable data even if points in $I$ and $I'$ might coincide. There exist natural projections $\Pi$ and $\Pi'$ from $\mathbf{B}'':=E^{J\sqcup J'}$ onto $\mathbf{B}$ and $\mathbf{B}'$. In the above notation, we mean elementwise, namely $\mathbf{O}''_x\subset \Pi^{-1}(\mathbf{O}_x)\cap (\Pi')^{-1}(\mathbf{O}'_x)$. Thus, we have a well-defined map $\check{\mathfrak{forget}}:\mathcal{P}_X\mapsto [\mathcal{K}(\mathcal{P}_X;\mathcal{D},\mathcal{O})]_R$ from a polyfold Fredholm structure to a Kuranishi structure R-equivalence class.
\end{enumerate}
\end{construction}

\begin{remark}\label{FORGETREMARK} Here are some brief remarks about the forgetful map above:
\begin{enumerate}
\item The above also works for a classical Fredholm section in a Banach bundle \'etale-proper Lie groupoid with a compact zero set in the orbit space, by applying classical analogue of \ref{PFDFDRED} instead.
\item The orientation for an oriented sc-Fredholm section naturally induces an orientation for a Kuranishi structure (and the associated R-equivalence class) constructed by the above forgetful construction.
\item The forgetful map is the first ever global finite dimensional reduction for any (polyfold) Fredholm section with group actions (ep-groupoid structure) and a compact zero set in orbit space into a \textbf{geometric structure}. Moreover, the ambiguity is explained via fairly geometric R-equivalence, rather than cobordism, and perturbation theory for oriented Kuranishi structure descends to R-equivalence classes and one can compare perturbations in a common refinement.
\item To have compact perturbation in polyfold theory, one needs sc-smooth partitions of unity and an auxiliary norm, and thus needs retracts are retracting from sc-Banach spaces with level-0 separable Hilbert spaces for the former and requires reflexivity of $(0,1)$-fibers if reflexive auxiliary norm is needed. However, using the forgetful construction and perturbation theory in \cite{DI}, one can compactly perturb any sc-Fredholm section $f:B\to E$ even when its base $B$ does not support cut-off functions. This can be thought of as an application of the forgetful construction, although for the symplecto-geometric moduli spaces in mind this is purely academic.
\item The forgetful map $\check{\mathfrak{forget}}$ descends to a map $\mathfrak{forget}$ mapping from R-equivalence classes of polyfold Fredholm structures, defined just by $\mathfrak{forget}:[\mathcal{P_X}]_R\mapsto \check{\mathfrak{forget}}(\mathcal{P}_X)$ using any choice $\mathcal{P}_X\in[\mathcal{P}_X]_K$. This is a well-defined map; and the map is invertible, see \ref{INVERTIBILITY}.
\item The forgetful map between R-equivalence classes can be upgraded into a functor which also maps between morphisms.
\item The forgetful functor intertwines perturbation theories for the oriented structure R-equivalence classes.
\item If group action/ep-groupoid is trivial, then there is a single chart finite dimensional reduction first pointed out to me by Hofer. Just apply \ref{DIMREDEX} (or its classical analogue if we are dealing with classical Fredholm sections). First pass $f$ to equivalent $\mathfrak{f}$, then do finite dimensional reduction $\mathfrak{f}|_{\{\mathfrak{f}\in pr_2^\ast \R^N\}}$. This can be shown to be R-equivalent to the construction \ref{FORGETFULCONSTRUCTION} above by idea similar to \ref{FORGETFULCONSTRUCTION} (v).
\item (This answers a question by Ono.) Using ideas in \ref{FORGETFULCONSTRUCTION}, we can show for example, for a fixed $J$, Kuranishi structures for a concrete moduli space without boundary (for simplicity) from two different choices in the constructions of \cite{FO} and \cite{FOOONEW} are R-equivalent.
\item R-equivalent Kuranishi structures are Kuranishi cobordant via Kuranishi structures on $X\times I$.
\item One reason that functoriality between structures does not feature much in either polyfold theory or theory of Kuranishi structures so far is that the ambient space as a whole is a single polyfold and when multiple intrinsically different Kuranishi structures appear in the same context, there are often just inclusions (or cobordisms) among them.
\end{enumerate}
\end{remark}

\subsection{Globalization construction from level-1 good coordinate systems to sc-Fredholm structure R-equivalence classes}\label{GLOBALIZATION}

We can construct a polyfold Fredholm structure from a (always Hausdorff, maximal and topologically matching) level-1 good coordinate system $\mathcal{G}^{\text{level-1}}_X$. Namely, starting with any level-1 good coordinate system $\mathcal{G}^{\text{level-1}}_X$, there exists a level-1 precompact shrinking $\hat{\mathcal{G}}^{\text{level-1}}_X$ (via level-1 structure) of $\mathcal{G}^{\text{level-1}}_X$ such that denoting $\underline{B}:=\bigsqcup_{x\in S}\hat V_x/\sim$ from $\{\hat V_x\}_{x\in S}$ in $\hat{\mathcal{G}}^{\text{level-1}}_X$ identified using $\sim$ introduced in \ref{MAXMATCHING} (1) and $\underline{E}:=\bigsqcup_{x\in S}E_x|_{\hat V_x}/\sim^{\text{bundle}}$ identified similarly using the corresponding bundle version $\sim^{\text{bundle}}$, we have an induced section $\underline{f}:\underline{B}\to\underline{E}$ from $s_x|_{\hat V_x}$. Moreover, $\underline{E}\to \underline{B}$ is actually the orbit space of a strong bundle $E\to B$ constructed crucially using level-1 structure on $\mathcal{G}^{\text{level-1}}_X$ and some (strong bundle) retract models, and we have an sc-Fredholm $f: B\to E$ inducing $\underline{f}$ between the orbit spaces. Denote the resulting sc-Fredholm structure $P(\hat{\mathcal{G}}^{\text{level-1}}_X,\text{data}):=(X, f, \psi:\underline{f}^{-1}(0)\cong \underline{f^{-1}(0)}\to X)$, where $\text{data}$ is some information obtained from $\mathcal{G}^{\text{level-1}}_X$. Different choices in the construction leads to R-equivalent sc-Fredholm structures, using the fact that different level-1 (chart-) refinements of a level-1 good coordinate system are R-equivalent as level-1 good coordinate systems.

Therefore, to go from a Kuranishi structure $\mathcal{K}_X$ exactly defined by FOOO in \cite{FOOO} to a polyfold Fredholm structure of HWZ, we need to apply theorems \ref{FOOOISDIR} and \ref{LEVEL1CATEXIST} and then the above, and denote the combined effect of this process by $\mathcal{G}^{\text{level-1}}(\mathcal{K}_X,\text{choice})$. To see that the composition of the above processes leads to a well-defined map from an FOOO's Kuranishi structure into an sc-Fredholm structure R-equivalence class in spite of various choices made, we apply the last sentence in the previous paragraph again, together with functoriality for embeddings between Kuranishi structures to level-1 embedding between associated level-1 good coordinate systems.

This sketches the main result in \cite{DIII}:
\begin{theorem} (globalization functor, \cite{DIII}) We have a well-defined map from FOOO's Kuranishi structures to sc-Fredholm R-equivalence classes. This map descends to a map from FOOO's Kuranishi structure R-equivalence classes. This map can be upgraded to a functor $$\mathfrak{globalize}:[\mathcal{K}_X]_R\mapsto [\mathcal{P}(\hat{\mathcal{G}}^{\text{level-1}}_X, \text{data})]_R$$ where $\mathcal{G}^{\text{level-1}}_X:=\mathcal{G}^{\text{level-1}}(\mathcal{K}_X, \text{choice})$, and morphisms in the domain and target categories will be introduced respectively in \cite{MAXCAT} and \cite{DIII}.
\end{theorem}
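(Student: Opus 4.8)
The plan is to split the statement into three tasks: (a) construct a single sc-Fredholm structure out of a single FOOO Kuranishi structure; (b) show the resulting R-equivalence class is independent of all choices, so that we get a well-defined map which moreover descends to R-equivalence classes of inputs; and (c) upgrade this map to a functor.

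For (a), given $\mathcal{K}_X$ a Kuranishi structure in the sense of \ref{KURANISHISTRUCTURE}, I would first apply \ref{FOOOISDIR} to obtain a refinement $\mathcal{K}'_X$ satisfying the maximality and topological matching conditions \ref{MAXMATCHING} (1) and (3), then apply \ref{LEVEL1CATEXIST} to produce a level-1 good coordinate system $\mathcal{G}^{\text{level-1}}_X := \mathcal{G}^{\text{level-1}}(\mathcal{K}_X,\text{choice})$. Then I run the globalization construction recalled just before the theorem: pass to a level-1 precompact shrinking $\hat{\mathcal{G}}^{\text{level-1}}_X$, form $\underline{B}:=\bigsqcup_{x\in S}\hat V_x/\!\sim$ and $\underline{E}:=\bigsqcup_{x\in S}E_x|_{\hat V_x}/\!\sim^{\text{bundle}}$, and realize $\underline{E}\to\underline{B}$ as the orbit space of a strong bundle ep-groupoid $E\to B$ built from (strong bundle) retract local models, yielding $f:B\to E$ with $\underline{f}^{-1}(0)\cong X$. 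The construction of these retract models is the heart of the matter: near $x\in S$ the model is essentially $\textbf{Emb}$ of \ref{SCFREDGENERALIZE} applied to the finite-dimensional (hence levelwise $C^\infty$, trivially regularizing, constant-sc-structure) section $s_x$; and the dimension jump where $C_y$ with $\dim E_y<\dim E_x$ meets $C_x$ is modelled as a retract precisely because the level-1 structure supplies the submersion $\pi_{xy}$, the extending subbundle $\tilde E_{xy}$, and the projections $\hat\pi_{xy}$, playing the role of the $(\pi,\Pi)$ in \ref{SCFREDEMBEDDING}. One then checks sc-Fredholmness of $f$ via \ref{LINEARSCFREDEQUIV} and the HWZ criterion, exactly as in \ref{SIMPLEYETSIG}. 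This step is the converse of construction \ref{FORGETFULCONSTRUCTION}.

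For (b), I would prove choice-independence up to R-equivalence of sc-Fredholm sections \ref{PFDREQUIV} one stage at a time. Two refinements produced by \ref{FOOOISDIR} are R-equivalent as Kuranishi structures with the two conditions, and R-equivalence of such structures is an equivalence relation by \ref{REQUIVISEQUIV}; next, the level-1 functoriality in \ref{LEVEL1CATEXIST} assigns to an embedding between such Kuranishi structures a level-1 embedding between associated level-1 good coordinate systems, so R-equivalent inputs give R-equivalent level-1 good coordinate systems (chasing the restrict-then-embed normal form of a refinement); finally the globalization construction sends R-equivalent level-1 good coordinate systems to R-equivalent sc-Fredholm structures, since a level-1 embedding carries, via its left inverse $\Pi$, exactly the submersion-and-projection data needed to build an embedding of sc-Fredholm sections as in \ref{SCFREDEMBEDDING}, and common refinements globalize to common refinements. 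Concatenating these, any two runs of the pipeline on a fixed $\mathcal{K}_X$ land in one R-equivalence class, so $\mathfrak{globalize}$ is well-defined; and if $\mathcal{K}_X$ and $\mathcal{K}'_X$ are R-equivalent FOOO Kuranishi structures, applying \ref{FOOOISDIR} to both and to a common refinement and repeating the chase shows the outputs agree, so the map descends to $[\mathcal{K}_X]_R$.

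For (c), a morphism of Kuranishi structure R-equivalence classes (in the sense of \cite{MAXCAT}) is a class in a localization of the category generated by embeddings and submersions, equivalently by level-1 embeddings; to each generator one associates, via the level-1 package of \cite{DI} and \ref{LEVEL1CATEXIST}, a level-1 embedding of level-1 good coordinate systems, and then via the globalization construction an embedding (or chart-refinement) of sc-Fredholm structures, hence a morphism in the target category of \cite{DIII}. One must check that this assignment respects composition, sends identities to identities, and descends through the localization (the ``follow a loop of arrows, recover the identity up to R-equivalence'' principle and the identity-functor statement \ref{ALLTHESAME} indicate the right behavior); functoriality then follows formally. \emph{The main obstacle} I expect is part (a): verifying that the glued $E\to B$ is genuinely a strong bundle polyfold and that $f$ is sc-Fredholm, i.e. producing correct retract local models at the dimension jumps out of the level-1 data and checking the sc-Fredholm criterion there — this is a genuine analytic construction (a converse to \ref{FORGETFULCONSTRUCTION}), not diagram chasing; and secondarily, checking that the morphism assignment in (c) is well-defined on the localized category requires care about how the generating relations interact with the level-1 and globalization constructions.
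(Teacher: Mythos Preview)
Your proposal is correct and follows essentially the same route as the paper's sketch: apply \ref{FOOOISDIR} and \ref{LEVEL1CATEXIST} to reach a level-1 good coordinate system, globalize by building retract/strong-bundle local models from the level-1 data $(\pi_{xy},\tilde E_{xy},\hat\pi_{xy})$, and prove choice-independence and descent to R-classes via the level-1 functoriality for embeddings in \ref{LEVEL1CATEXIST}. The only detail the paper adds beyond your outline is that the globalization of $\hat{\mathcal{G}}^{\text{level-1}}_X$ is carried out \emph{inductively}, reducing the number of charts by one at each step (so after the first step the remaining charts are already sc-Fredholm sections with embeddings in the sense of \ref{SCFREDEMBEDDING} among them, a ``polyfold Kuranishi structure''), and that one must convert the orbifold description to an ep-groupoid one along the way as in section \ref{ORBIFOLD}.
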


\begin{remark} 
\begin{enumerate}
\item The construction $\mathcal{P}(\hat{\mathcal{G}}^{\text{level-1}}_X, \text{data})$ from $\hat{\mathcal{G}}^{\text{level-1}}_X$ uses some information in $\mathcal{G}^{\text{level-1}}_X$. We do an inductive globalization construction, and after each step, the chart numbers in the level-1 good coordinate systems decrease by 1. However after the first step, each Kuranishi chart becomes an sc-Fredholm structure with embeddings \ref{SCFREDEMBEDDING} among them (so we have some sort of a polyfold Kuranishi structure in the intermediate step). Thanks to the definition of embedding of sc-Fredholm sections \ref{SCFREDEMBEDDING}, the globalization process is functorial and can still be applied in the inductive step; and after finitely many steps, it terminates yielding an sc-Fredholm section. During the construction, one also needs to switch from the orbifold description to the ep-groupoid one, see section \ref{ORBIFOLD}.
\item (Answering a question by Ohta.) If a Kuranishi structure has $G$-equivariance structure, then the sc-Fredholm section constructed by globalization construction in \cite{DIII} will also have this $G$-equivariance structure. There should be results beyond this and this direction might get explored in a future paper.
\end{enumerate}
\end{remark}

\subsection{Polyfold--Kuranishi correspondence}\label{PKC}

\begin{theorem}\label{INVERTIBILITY} (\cite{DI}, \cite{MAXCAT}, \cite{DII}, \cite{DIII}) $\mathfrak{forget}$ aforementioned in \ref{FORGETREMARK} (5) descended from $\check{\mathfrak{forget}}$ in \ref{FORGETFULCONSTRUCTION} and $\mathfrak{globalize}$ in subsection \ref{GLOBALIZATION} as functors between categories of structure R-equivalence classes are mutual two-sided inverses. Each perturbation theory factors through oriented structure R-equivalence classes. Each functor as a map between oriented structure R-equivalence classes intertwines the respective perturbation theories.
\end{theorem}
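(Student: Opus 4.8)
The plan is to establish the four assertions in turn, reducing each to the independence-of-choices results already recorded for the two constructions, to the equivalence-relation properties (\ref{REQUIVISEQUIV} and its sc-Fredholm analogue \ref{PFDREQUIV}), and to the fact that both perturbation theories descend to R-equivalence classes. \emph{First, that $\mathfrak{globalize}\circ\mathfrak{forget}$ is the identity.} Starting from $\mathcal P_X=(X,f\colon B\to E,\psi)$, choose presummable data $\mathcal D$ and a reducible presentation $\mathcal O$ as in \ref{FORGETFULCONSTRUCTION}; the resulting Kuranishi charts $s_x=\mathbf f|_{\{\mathbf f\in\mathbf H_x\}}$ are, by \ref{PFDFDRED}, local finite dimensional reductions of $\mathbf f=f^{(|J|)}$, which by \ref{SUMMABILITYTRICK} refines and is R-equivalent to $f$. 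I would then pass to an associated Hausdorff good coordinate system via a finite subcover of $\{\mathbf O_x\}$ (as in \ref{GCSWOIND}) and observe that it carries a \emph{natural} level-1 structure: its submersions and bundle projections are exactly the data $\pi,\Pi$ of \ref{SCFREDEMBEDDING}(2) already attached to the dimensional reductions, and the regions of validity of the tangent bundle condition are controlled because $\mathbf f\pitchfork\mathbf H_x$ holds on all of $\mathbf O_x$. Feeding this level-1 good coordinate system into the globalization of \S\ref{GLOBALIZATION} reglues the retract models of the charts into a retract model of an sc-open neighbourhood of $\mathbf f^{-1}(0)$ in $\mathbf B$ and reassembles the section into $\mathbf f$; hence $\mathfrak{globalize}(\check{\mathfrak{forget}}(\mathcal P_X))=[\mathbf f]_R=[f]_R=[\mathcal P_X]_R$. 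Independence of the auxiliary level-1 and retract choices is the last sentence of \S\ref{GLOBALIZATION}, and independence of $\mathcal D,\mathcal O$ is \ref{FORGETFULCONSTRUCTION}(v).

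\emph{Second, that $\mathfrak{forget}\circ\mathfrak{globalize}$ is the identity, and functoriality.} Conversely, from $\mathcal K_X$ one passes by \ref{FOOOISDIR} and \ref{LEVEL1CATEXIST} to a level-1 good coordinate system $\mathcal G^{\text{level-1}}_X$ refining it, and lets $f\colon B\to E$ be the output of \S\ref{GLOBALIZATION}. Since $B$ near $f^{-1}(0)$ is glued from retract models built over the charts using the level-1 structure, each chart domain embeds (up to a cover) as a sub-M-polyfold of $B$ with its obstruction bundle a strong subbundle of $E$ and its section recovered as the restriction of $f$ to the appropriate zero-preimage. Choosing the presummable data for \ref{FORGETFULCONSTRUCTION} compatibly with these sub-M-polyfolds (and the $\mathbf O_x$ aligned with the level-1 domains) makes the local finite dimensional reductions in \ref{FORGETFULCONSTRUCTION}(iv) recover the charts of a chart-refinement of $\mathcal G^{\text{level-1}}_X$, up to the harmless replacement $f\rightsquigarrow f^{(|J|)}$ controlled by \ref{SUMMABILITYTRICK}. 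Hence $\check{\mathfrak{forget}}(\mathcal P(\hat{\mathcal G}^{\text{level-1}}_X,\text{data}))$ refines the Kuranishi structure induced by $\mathcal G^{\text{level-1}}_X$, which refines $\mathcal K_X$, so the composite returns $[\mathcal K_X]_R$. On morphisms, \ref{LEVEL1CATEXIST} upgrades an embedding of Kuranishi structures to a level-1 embedding, \S\ref{GLOBALIZATION} carries level-1 embeddings to embeddings of sc-Fredholm sections in the sense of \ref{SCFREDEMBEDDING}, and $\check{\mathfrak{forget}}$ carries those to embeddings of Kuranishi structures (\ref{FORGETREMARK}(6)); since every morphism between R-equivalence classes is represented by a zig-zag of (chart-)refinements and embeddings, the two composites act as the identity on objects and on morphisms, i.e.\ are the identity functors, so $\mathfrak{forget}$ and $\mathfrak{globalize}$ are mutually inverse.

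\emph{Third, the perturbation statements.} That each perturbation theory factors through oriented R-equivalence classes is \ref{KEYREMARKS}(1) on the polyfold side and the corresponding statement of \cite{DI} (used in \ref{FOOOISDIR}) on the Kuranishi side, with orientations transported as in \ref{FORGETREMARK}(2). For the intertwining I would show that a transverse $sc^+$/multisection perturbation of $f$ restricts, through the dimensional reductions $s_x=f|_{\{f\in E_x\}}$ and the projections $\Pi$, to a chartwise perturbation of the forgetful Kuranishi structure that is compatible along the level-1 structure and still in good position by the quotient theory \ref{QUOTIENTTHEORY}; conversely a compatible chartwise perturbation is lifted and extended through the level-1 data precisely as in \S\ref{CWPISCF} and \cite{DI}~\S8, the lift being automatically in good position by the mechanism of \ref{KEYREMARKS}(4). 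Lemmas \ref{SAMEFDRED} and \ref{INDDOESNOTM} then show these two passages are mutually inverse up to R-equivalence, so the functors match perturbations, perturbed solution spaces, and the resulting (co)bordism classes.

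\emph{The main obstacle.} The heart of the matter is the two identity statements: one must prove that the regluing/retract machinery of \S\ref{GLOBALIZATION} and the local-reduction machinery of \ref{FORGETFULCONSTRUCTION} are genuinely inverse rather than merely producing similar-looking objects, by exhibiting for any choices made in one construction matching choices in the other for which the composite is visibly a refinement, and only afterwards invoking independence of choices. The bookkeeping is delicate: one must track the extra ambient directions introduced by $f\rightsquigarrow f^{(|J|)}$, reconcile the genuinely non-canonical chart dimensions on the two sides, and verify that the level-1 structure used for globalization is literally the $\pi,\Pi$-data packaged into the dimensional reductions of \ref{SCFREDEMBEDDING}. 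Once this compatibility is in place, functoriality and the perturbation statements are essentially formal.
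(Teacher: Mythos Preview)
The paper does not actually prove this theorem. Theorem~\ref{INVERTIBILITY} is stated with attribution to \cite{DI}, \cite{MAXCAT}, \cite{DII}, \cite{DIII}---the author's other papers, two of which are listed as ``in preparation''---and no argument is given in the present text beyond the surrounding descriptions of the two constructions in \S\ref{FORGETSUBSECTION} and \S\ref{GLOBALIZATION}. This paper is explicitly a survey and preview (see the abstract and introduction), and the polyfold--Kuranishi correspondence is one of the announced results whose full proof is deferred. So there is no ``paper's own proof'' to compare your proposal against.

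That said, your outline is a reasonable reconstruction of the strategy the paper gestures at. The ingredients you invoke---\ref{SUMMABILITYTRICK} to pass between $f$ and $f^{(|J|)}$, the observation that the charts produced by \ref{FORGETFULCONSTRUCTION} carry natural $\pi,\Pi$-data in the sense of \ref{SCFREDEMBEDDING}, the use of \ref{FOOOISDIR} and \ref{LEVEL1CATEXIST} on the Kuranishi side, and the independence-of-choices statements \ref{FORGETFULCONSTRUCTION}(v) and the last paragraph of \S\ref{GLOBALIZATION}---are exactly the pieces the paper lays out, and your identification of the ``main obstacle'' (matching choices so that each composite is visibly a refinement before invoking choice-independence) is the correct diagnosis of where the work lies. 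One point to flag: your claim that the good coordinate system coming out of \ref{FORGETFULCONSTRUCTION} carries a \emph{natural} level-1 structure inherited from the ambient $\mathbf{B}$ is plausible but not asserted in the paper; the paper's route is rather to invoke \ref{LEVEL1CATEXIST} to \emph{construct} a level-1 structure, and the compatibility of that constructed structure with the ambient sc-Fredholm data is part of what \cite{DIII} is meant to establish. Whether one can shortcut this by reading the level-1 data directly off the dimensional reductions is precisely the kind of detail the forthcoming papers would need to settle.
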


\begin{corollary} (R-equivalence to a single chart in the trivial stabilizer case) If an FOOO's Kuranishi structure $\mathcal{K}_X$ is stabilizer-free (namely $G_p=\{Id\}$ for all $p\in X$), then it is R-equivalent to a Kuranishi structure induced by a section in a finite dimensional vector bundle, namely a single chart. If $\mathcal{K}_X$ is oriented, so is this single chart, and the perturbation theory of this single chart (a good coordinate system) agrees with that of $\mathcal{K}_X$.
\end{corollary}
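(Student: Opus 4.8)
The plan is to transport the statement to the polyfold side via the globalization functor, use the stabilizer-free hypothesis to run Hofer's single-chart reduction of an sc-Fredholm section, and return with the forgetful functor. First I would apply $\mathfrak{globalize}$ (subsection~\ref{GLOBALIZATION}, using \ref{FOOOISDIR} and \ref{LEVEL1CATEXIST}) to obtain an sc-Fredholm structure $\mathcal{P}_X=(X,f\colon B\to E,\psi)$ with $[\mathcal{P}_X]_R=\mathfrak{globalize}([\mathcal{K}_X]_R)$. Since $G_p=\{Id\}$ for every $p\in X$, the globalization construction carries the (trivial) stabilizer data through — as in the remark answering Ohta's question for general $G$-equivariance — so the output ep-groupoid has only trivial stabilizers: $B$ is a genuine M-polyfold, $E\to B$ a strong M-polyfold bundle, and there is no group action to track. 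If $\mathcal{K}_X$ has empty boundary then so does $B$; otherwise one replaces the statements below by their corners analogues.

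Next I would invoke \ref{FORGETREMARK}~(8) together with \ref{DIMREDEX}. Because $f^{-1}(0)\cong X$ is compact, choose finitely many compactly supported $sc^+$-sections $s_1,\dots,s_N$ of $E$ so that $f(y)+\sum_i a_i s_i(y)$ is in good position (transverse) over $B\times B^{\R^N}_\epsilon(0)$, form $\mathfrak{f}\colon B\times B^{\R^N}_\epsilon(0)\to pr_1^\ast E\oplus pr_2^\ast\R^N$, $(y,a)\mapsto(f(y)+\sum_i a_i s_i(y),a)$, and set $g:=\mathfrak{f}|_{\{\mathfrak{f}\in pr_2^\ast\R^N\}}$, a section of a trivial finite rank vector bundle over the finite dimensional manifold $V:=\{(y,a)\mid f(y)+\sum_i a_i s_i(y)=0\}$, with $g^{-1}(0)=f^{-1}(0)\times\{0\}\cong X$. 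Viewing $g$ as a classical (hence, with constant sc-structure, trivially sc-)Fredholm section, \ref{DIMREDEX} shows $g$ and $f$ embed into $\mathfrak{f}$, so they are R-equivalent. Let $\mathcal{K}^{\mathrm{single}}_X$ be the Kuranishi structure induced by $g$: this is exactly the output of $\mathfrak{forget}$ (construction~\ref{FORGETFULCONSTRUCTION}) applied to the one-chart classical Fredholm structure $g$, taking the fiber-standard subbundle in \ref{FORGETFULCONSTRUCTION}~(i) to be the whole fiber of $g$, so that every chart is a restriction of $g$; in FOOO's indexing it is the Kuranishi structure carried by a single section in a finite dimensional vector bundle, and its one-element index set makes it automatically a Hausdorff good coordinate system.

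To close the loop I would use that $\mathfrak{forget}$ descends through R-equivalence of sc-Fredholm structures (subsection~\ref{FORGETSUBSECTION}) and that $\mathfrak{forget}\circ\mathfrak{globalize}=\mathrm{id}$ (\ref{INVERTIBILITY}): $[\mathcal{K}^{\mathrm{single}}_X]_R=\mathfrak{forget}([g]_R)=\mathfrak{forget}([f]_R)=\mathfrak{forget}([\mathcal{P}_X]_R)=(\mathfrak{forget}\circ\mathfrak{globalize})([\mathcal{K}_X]_R)=[\mathcal{K}_X]_R$, which is the first assertion (this is also what \ref{FORGETREMARK}~(8) records). For the orientation clause: if $\mathcal{K}_X$ is oriented then, since by \ref{INVERTIBILITY} $\mathfrak{globalize}$ and $\mathfrak{forget}$ are defined compatibly on oriented R-equivalence classes, $\mathcal{P}_X$ and then $\mathcal{K}^{\mathrm{single}}_X$ inherit orientations (the embeddings of \ref{SCFREDEMBEDDING} used in \ref{DIMREDEX} respect orientation data, cf.\ \ref{FORGETREMARK}~(2)). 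Finally, the perturbation theory of oriented Kuranishi structures factors through R-equivalence classes (\ref{FOOOISDIR}, \ref{INVERTIBILITY}), so the perturbation theory of the single chart $\mathcal{K}^{\mathrm{single}}_X$ agrees with that of $\mathcal{K}_X$.

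The \emph{main obstacle} is the verification in the first paragraph that $\mathfrak{globalize}$ preserves stabilizer-freeness, i.e.\ lands in the honest (group-free) M-polyfold world rather than a nontrivial ep-groupoid — this is precisely what makes the single-chart reduction available, since for a nontrivial groupoid \ref{DIMREDEX} only produces a multisection, hence a multi-chart good coordinate system rather than one chart. A secondary point is that \ref{DIMREDEX} is stated for empty boundary, so in the presence of corners one must use its corners analogue throughout; and one should confirm that the Kuranishi structure induced by $g$ is genuinely "a single chart" in FOOO's sense, which is immediate once the fiber-standard subbundle in \ref{FORGETFULCONSTRUCTION}~(i) is taken to be the entire fiber.
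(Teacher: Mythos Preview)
Your proposal is correct and follows essentially the same route as the paper's proof: globalize via \ref{FOOOISDIR}, \ref{LEVEL1CATEXIST} and subsection~\ref{GLOBALIZATION}, apply the single-chart reduction \ref{DIMREDEX} (cf.\ \ref{FORGETREMARK}~(8)), and close the loop with $\mathfrak{forget}\circ\mathfrak{globalize}=\mathrm{id}$ from \ref{INVERTIBILITY}. One technical point you omit but the paper singles out: the globalization construction in subsection~\ref{GLOBALIZATION} produces retract models with level-$0$ separable Hilbert spaces, which is what guarantees the existence of sc-smooth cut-off functions and hence of the compactly supported $sc^+$-sections you need in \ref{DIMREDEX}; without this the step ``choose finitely many compactly supported $sc^+$-sections'' is not automatic. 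Conversely, the issue you flag as the \emph{main obstacle}---that $\mathfrak{globalize}$ preserves stabilizer-freeness---is treated by the paper as immediate from the construction (the level-$1$ good coordinate system inherits trivial $G_x$'s, and the ep-groupoid built from it then has trivial stabilizers), so this is less of a genuine hurdle than the Hilbert-space point.
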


\begin{proof} By applying \ref{FOOOISDIR} and \ref{LEVEL1CATEXIST}, then \ref{GLOBALIZATION}, one has an sc-Fredholm section $f$ with the base M-polyfold based on retracts retracting from sc-Banach spaces with level-0 being separable Hilbert spaces, due to the retract models chosen. Then sc-smooth cut-off functions exist, and one can apply \ref{DIMREDEX} and its finite dimensional reduction $\mathfrak{f}|_{\{\mathfrak{f}\in pr_2^\ast \R^N\}}$ over there is the desired single chart. Since $\mathfrak{globalize}[\mathcal{K}_X]_R=[f]_R=[\mathfrak{f}]_R$. Using idea of \ref{FORGETFULCONSTRUCTION} (v), one can show $[\mathfrak{forget}(\mathfrak{f})]_R=[\mathcal{K}(\mathfrak{f}|_{\{\mathfrak{f}\in pr_2^\ast \R^N\}})]_R$, where $\mathcal{K}(\cdot)$ denotes a Kuranishi structure induced by a good coordinate system in the input. Now $[\mathcal{K}(\mathfrak{f}|_{\{\mathfrak{f}\in pr_2^\ast \R^N\}})]_R=[\mathfrak{forget}(\mathfrak{f})]_R=\mathfrak{forget}[\mathfrak{f}]_R=\mathfrak{forget}\circ\mathfrak{globalize}[\mathcal{K}]_R=Id[\mathcal{K}_X]_R=[\mathcal{K}_X]_R$, which says the Kuranishi structure $\mathcal{K}_X$ is R-equivalent to a Kuranishi structure induced by a single chart $\mathfrak{f}|_{\{\mathfrak{f}\in pr_2^\ast \R^N\}}$. Then apply theorem \ref{INVERTIBILITY} regarding the perturbation theories.
\end{proof}

\begin{remark}\label{PKAPPLICATION} Here are some immediate applications of polyfold--Kuranishi correspondence:
\begin{enumerate}[(1)]
\item For fixed $J$, the moduli space can be equipped with a canonical Kuranishi structure R-equivalence class. The Kuranishi structure R-equivalence class itself can be considered as an invariant (for a fixed $J$). Kuranishi structure R-equivalence classes for the moduli spaces for different $J$ are compared via Kuranishi cobordism R-equivalences similar to the usual case.
\item One can transfer analytic foundation from one framework to the other, and then get the same results by using one framework with the given analytic foundation and using the other framework with the transferred analytic foundation. So for any given moduli space, analytic foundation only needs to be established for one framework. Viewed differently, one can also transfer perturbation theories across the frameworks.
\item (Answering Solomon's question.) Recall that polyfold construction for the moduli space is fairly canonical: fixing an exponential gluing profile and a function space choice for sc-structure (a sequence of appropriate weights for $H^{3+\cdot}$ or some other function space choice), a geometric auxiliary data (almost complex structure $J$, contact form etc), one then has a canonical object $f: B \to E$, an sc-Fredholm section over a polyfold ambient space $B$ whose zero set orbit space is identified with the coarse moduli space. By \ref{INVERTIBILITY}, polyfold theory becomes even more canonical: One can apply forgetful construction \ref{FORGETFULCONSTRUCTION} to contruct a same Kuranishi structure, up to chart-equivalence (in particular, R-equivalence), for different weights or function spaces; so by polyfold--Kuranishi correspondence, different choices of function spaces give R-equivalent sc-Fredholm structures $(X, f:B\to E, \psi)$. (Essentially one form a fiber product in the sense of \cite{DI} for two different $f$ over a level-1 good coordinate system of the Kuranishi structure, producing a common refinement sc-Fredholm structure.) So construction of an R-equivalence class of an sc-Fredholm section $f$ on a moduli space $\mathcal{M}(\mathfrak{D})$, where $\mathfrak{D}$ is a necessary geometric auxiliary data, is canonical after just fixing an admissible\footnote{See some discussion of \cite{Polyfoldanalysis} 4.2. Although not explicitly given, a general criterion satisfying which candidates of gluing profiles should work in polyfold theory in the place of the exponential gluing profile can be extracted from HWZ's work in principle.} gluing profile.
\end{enumerate}
\end{remark}

\section{Different views of orbifolds/groupoids (and intermediates) and Morita-equivalence}\label{ORBIFOLD}

There are several approaches to keep track of symmetry, and different virtual theories are phrased using different ways. In this section, we will describe how to translate between different descriptions. For simplicity, we restrict to the effective group action case.

In the sc-setting, we have discussed ep-groupoids.

In finite dimensions, we have theories of \'etale-proper Lie groupoids, and (representatives of) orbifolds.

For orbifolds, we can just take $E_p=0, p\in X$ (thus with open coordinate change $(\phi_{pq},V_{pq})$) in \ref{KURANISHISTRUCTURE}, and note that coordinate change $V_{pq}\subset V_q$ from a chart $(G_q, V_q, \psi_q)$ to $(G_p, V_p, \psi_p)$ for $q\in X_p:=\psi_p(\underline{V_p})=\psi_p(V_p/G_p)$ just needs to have the following property $q\in X_q|_{V_{pq}}:=\psi_q(\underline{V_{pq}})$ and nothing more. So $V_{pq}$ can be really small. Two representatives are (chart-) equivalent (now same as Morita-equivalent) if they are commonly chart-refined by another such structure, and such an equivalence class is an \textbf{orbifold}. Good places to learn about this formulation of orbifolds are appendix of \cite{CRuan} and \cite{ALR}. A special case of one argument in the process of proving \ref{FOOOISDIR} shows that for $X$, $\mathcal{O}_X:=(X,\{(G_p, V_p,\psi_p)\}_{p\in X},\{(\phi_{pq},V_{pq})\}_{q\in X_p})$ has a refinement $\mathcal{O}'_X$ (with charts of the same dimensions as that of $\mathcal{O}_X$, with extended coordinate changes but both charts and coordinate changes are still of the form of global quotients), such that $\mathcal{O}'_X$ satsifies $X$-maximality (same as $X$-maximality), so in particular, $X_q|_{V'_{pq}}=X_q|_{V'_q}\cap X_p|_{V'_p}$ (and coordinate changes are compatible up to group actions as usual).

There is also a cover definition of an orbifold representative which is not parametrized pointwise as in $\mathcal{O}_X$ which can be thought of as a less stringent version of a corresponding good coordinate system of $\mathcal{O}_X$. There are two flavors: The Satake's definition is a paracompact second-countable topological space $X$ with a collection of orbifold charts $\{(G_\alpha, V_\alpha, \psi_\alpha: V_\alpha/G_\alpha\to X_\alpha)\}_{\alpha\in \Lambda}$ such that $X=\cup_{\alpha\in\Lambda} X_\alpha$, and to impose compatibility, for all $z\in X_\alpha\cap X_\beta\not=\emptyset$, there exists $\gamma\in \Lambda$ (depending on $z, \alpha, \beta$) such that $z\in X_\gamma\subset X_\alpha\cap X_\beta$ and there exist open equivariant embeddings from $(G_\gamma, V_\gamma, \psi_\gamma)$ into respective $(G_\alpha,V_\alpha,\psi_\alpha)$ and $(G_\beta, V_\beta, \psi_\beta)$ with both domains of coordinate changes being $V_\gamma$. An \textbf{orbifold} is just the maximal atlas, with the usual notion of compatibility of atlases. Another definition is mentioned in Thurston's online book \cite{Thurston}, which requires that $X_\alpha\cap X_\beta$ is $X_\gamma$ for some $\gamma\in \Lambda$, and the inclusions into $X_\alpha$ and $X_\beta$ is induced by some equivariant embeddings. If one says that $X_\alpha$ is uniformized by $(G_\alpha,V_\alpha,\psi_\alpha)$, then such an orbifold representative definition of Thurston can be summarized as an open cover consisted of uniformized open sets which are closed under finite intersections, and one can also go to its maximal atlas which is an orbifold in the sense of Thurston.

There is also another way to describe the coordinate change between orbifold charts as in \cite{PolyfoldGW} and \cite{PolyfoldSurveyNew}, and also used at places in \cite{DI} e.g. 7.3 (1). This is very close to \'etale-proper Lie groupoid and Thurston's orbifold representative definition. Denote an orbifold chart as $C_\alpha:=(G_\alpha, V_\alpha, \psi_\alpha)$, then we can define coordinate change between $C_\alpha$ and $C_\beta$ as $M(C_\alpha, C_\beta):=\{(z,g,w)\;|\;z\in V_\alpha, w\in V_\beta, g: z\to w\}$. Here $g$ is an arrow identifying $x$ to $z$, and its origin can be understood in three ways and with the fourth way arising from the normalization in the paragraph following (3).
\begin{enumerate}
\item (\cite{PolyfoldGW}, \cite{PolyfoldSurveyNew}) If $\{C_\alpha\}_{\alpha\in \Lambda}$ are good uniformizers for $X$ where the topological space $X$ is a collection of isomorphism classes of certain objects. Namely, for any smooth parametrization of objects whose isomorphism classes are in $X$, $\tau\mapsto x_\tau$ based at $x_{\tau_0}$, fix an isomorphism $\phi_0: x_{\tau_0}\to z_0$ and $z_0\in V_\alpha$  for some $\alpha\in\Lambda$, then the \textbf{good uniformizer property} says there exists a unique smooth $\tau\mapsto z(\tau)\in V_\alpha$ and a unique smooth family of isomorphisms $\phi_\tau: x_\tau\to z(\tau)$ such that $z(\tau_0)=z_0$ and $\phi_{\tau_0}=\phi_0$. In particular if $\tau\mapsto x_\tau$ is provided by $y\in V_\beta$, then the good uniformizer property provides coordinate changes between charts. The above $g$ in $M(C_\alpha, C_\beta)$ can be understood as an isomorphism between two objects. $M(C_\alpha, C_\beta)$ is a manifold due to the good uniformizer property.
\item (see e.g. \cite{PolyfoldIII}) If $C_\alpha=(G_\alpha, V_\alpha, \psi_\alpha)$ arises as a natural representation of stabilizer group for an \'etale-proper Lie groupoid, then $M(G_\alpha,G_\beta)=s^{-1}(V_\alpha)\cap t^{-1}(V_\beta)$ an open subset of the morphism space of the groupoid, and $g$ is just a morphism/arrow from $z$ to $w$.
\item (see e.g. certain parts of \cite{DI}) If $C_\alpha=(G_\alpha, V_\alpha, \psi_\alpha: V_\alpha/G_\alpha\to X_\alpha)$ is a chart from Thurston's orbifold representative, then for $M(C_\alpha, C_\beta)$ would be empty unless $X_\alpha\cap X_\beta\not=\emptyset$ in $X$. In that case there exists $C_\gamma$ with $X_\gamma=X_\alpha\cap X_\beta$ and equivariant embeddings $\phi_{\alpha\gamma}$ and $\phi_{\beta\gamma}$ from $V_\gamma$ into $V_\alpha$ and $V_\beta$ respectively. Then $g$ in $M(C_\alpha, C_\beta)$ is $g_\beta\cdot \phi_{\beta\gamma}\circ(\phi_{\alpha\gamma}|_{\phi_{\alpha\gamma}(V_\gamma)})^{-1}\cdot g_\alpha$ for some $\gamma_\alpha\in G_\alpha, \gamma_\beta\in G_\beta$.
\end{enumerate}

There is yet another way to deal with orbifolds, which is an abstraction of the orbifold setting I learned from discussion with Shaofeng Wang whose work (Banach setting) is based on work of Lu-Tian \cite{LuTian}. This viewpoint is also similar to McDuff's treatment \cite{MNEW} (finite dimensional setting). Suppose there are two charts $C_\alpha$ and $C_\beta$ overlapping $X_\alpha\cap X_\beta\not=\emptyset$ and denote $\varphi_\alpha:=\psi_\alpha\circ\text{quot}_\alpha: V_\alpha\to X_\alpha$. We can take the fiber-product $V_{\{\alpha,\beta\}}:=V_{\alpha}\;_{\varphi_\alpha}\times_{\varphi_\beta} V_\beta$ which is not a manifold, but there is a \textbf{normalization} $\tilde V_{\{\alpha,\beta\}}$ which abstractly separates out branches (for example using nearby information), and this exactly agrees with $M(C_\alpha,C_\beta)$ in (3) above. Namely $V_{\{\alpha,\beta\}}$ is not a manifold because we forget about $g$ in $(w,z)$ for $w$ and $z$ connected by $g$. One can do this repeatedly to get $V_I$ and $\tilde V_I$ for any subset $I\subset \Lambda$ for a locally finite cover $\{C_\alpha\}_{\alpha\in \Lambda}$ of $X$ (and an element in $\tilde V_I$ will correspond to $(z_1,g_{12}, z_2, g_{23}, \cdots, g_{n-1 n}, z_n)$ for $|I|=n$). The difference between perturbations of sections in bundles (global Banach bundle or Kuranishi (type) structure) between setting (3) above and this approach in the current paragraph is that (3) uses $M(C_\alpha, C_\beta)$ as coordinate changes to get globally invariant multisectional perturbation on the object space level, and the approach in the current paragraph using inclusion order relation on the power set of $\Lambda$ to get a globally compatible single valued perturbation on the morphism space level $\tilde V_I$ (equivalently $M(\{C_\alpha\}_{\alpha\in I})$) which induces an invariant weighted (unbranched) pseudo-cycle. In the latter approach, because of the normalization, points on the singular set are counted more than once thus it requires the singular set is of codimension at least 2 (so it is harmless for pseudocycles), which is the case in the finite dimensional oriented orbifold case and assumed in the hypothesis in Lu-Tian's \cite{LuTian} and Wang's recent work on the abstract virtual Euler class. This also explains why in the Satake's or Thurston's orbifold case, at the overlap region group action $G_\gamma$ is isomorphic to subgroup of $G_\alpha$ and $G_\beta$ (working on the orbit space level via the object space), but in the description in the current paragraph the group acting on the overlap $\tilde V_I$ is the product $\times_{\alpha\in I}G_\alpha$ (can be thought of as working on the morphism space level, also clear from the descripion of form of $g$ in (3) above). 

We summarize various ways of describing orbifold representatives:

\begin{enumerate}[(a)]
\item \'etale-proper Lie groupoid,
\item HWZ's ep-groupoid in polyfold setting,
\item orbifold representative $\mathcal{O}_X$ (indexed pointwise) in the spirit of FOOO's Kuranishi structure,
\item orbifold representative $\mathcal{O}_X$ (indexed pointwise) with ($X$-) maximality,
\item Satake's orbifold representative (indexed as a cover),
\item Thurston's orbifold representative (indexed as a cover which is closed under finite intersections) in the spirit of ($X$-) maximal good coordinate system,
\item HWZ's good uniformizers and $M(C_\alpha, C_\beta)$ in (1) above,
\item HWZ's `island' description of an \'etale-proper Lie groupoid/ep-groupoid (after chart-refining and take natrual representations of the stabilizer groups) in (2) above,
\item orbifold representative with coordinate change between chart described in one go in $M(C_\alpha,C_\beta)$ in (3) above, and
\item Lu-Tian's description via power set and normalization (in Banach bundle case) and McDuff's description (in Kuranishi atlas case).
\end{enumerate}

We describe how to transfer/translate among orbifold settings (without concerning change of finite dimensional reduction descriptions, forgetful construction and globalization construction, which is the subject of the next section \ref{UNIFICATION}).

$(a)\leftrightarrow (b)$ in finite dimensions: (a) and (b) agree in finite dimensions.

$(a),(b)\to (c), (d), (e), (f):$ Take natural representations of stabilizer groups as in \cite{PolyfoldIII}, we can get an orbifold representative described.

$(c)\to (d):$ This is a sub-argument in proving \ref{FOOOISDIR}. The statement is that there exists a $X$-maximal (same as maximal) $\mathcal{O}'_X$ which refines (of the same dimension of course, chart-refinement on charts, but extending coordinate changes of) $\mathcal{O}_X$ in (c).

$(c),(e)\;(\text{hence easier cases}\;(d),(f)) \to (a)$: Use almost free $O(n)$-action on frame bundle which is a manifold (learned from Ono), then take quotient groupoid (via local slices) as a Morita-refinement.

$(c)\leftrightarrow (e)$: (c) can be regarded as a special cover in setting of (e), and (e) induces (c) in a similar but easier way as a good coordinate system induces a Kuranishi structure.

$(e)\to (f)$: $(e)\to (c)\to (d)$, then take a subcover which refines the cover given in (e), then we have obtained an orbifold representative (f) which chart-refines $(e)$.

$(d)\to (c), (f)\to (e)$: special cases.

$(g)\leftrightarrow(h)\leftrightarrow(i)$: The same structure (just different origins).

$(a),(b)\to (h)$: Via natural representations of stabilizer groups.

$(h)\to (a), (b)$: Take the disjoint union of $\{V_\alpha\}_{\alpha\in\Lambda}$ for the object space, and the disjoint union of $\{M(C_\alpha,C_\beta)\}_{(\alpha,\beta)\in\Lambda\times\Lambda}$ for the morphism space, with the natural structure maps induced by $g$.

$(f)\to (i)$: Explained in (3).

$(i)\to (d)$: By inducing.

$(i)\leftrightarrow (j)$: Explained in the paragraph following (3).

In the next section, we focus on how to organize different systems of finite dimensional reductions among various approaches and in relation to sc-Fredholm sections with compact zero sets in the orbit spaces, with translation among descriptions of orbifolds understood in the background (sometimes briefly commented on).

\section{Other virtual theories and pairwise identifications up to R-equivalences. Part A: Abstract perturbative structures}\label{UNIFICATION}

The unification diagram below indicates how to go from a virtual theory to another (except last three rows on the rightmost column which are algebraic constructions). Following the arrow direction and returning to the starting point, the effect is identity up to appropriate R-equivalence at that point. Any identification between two (oriented) structures intertwines perturbation theories as well as algebraic constructions chosen.

We could have just indicated a minimal number of connections and then compose to connect between any two, but the point is that the direct identification/correpondence is just as easy and clear. The sc-Fredholm sections/structures will always have compact zero sets in the orbit spaces (for simplicity) and we will omit mentioning this in the descriptions below.

\begin{figure}
\begin{tikzpicture}[scale=1]


\matrix(m)[matrix of math nodes, row sep=0.5em, column sep=1.5 em,
text height=1 ex, text depth=0.25ex]{
 \text{non-perturbative}     & \text{Kuranishi structure} & \text{Kuranishi sp./d-orbifold}\\
\text{theory}             & \emph{Fukaya-Oh-Ohta-Ono}                & \emph{Joyce}\\
  & &\\
  &  &\\
  &  &\\
  &  &\\
  &  &\\
  &  &\\
                      & \text{GCS}                 & \text{singular homology}\\                     
                      & \emph{Fukaya-Oh-Ohta-Ono}                &\\
 & &\\
 & &\\
  &  &\\
  &  &\\
  &  &\\
  &  &\\
\text{polyfold Fredholm structure} &\text{level-1 GCS (or K.)} & \text{de Rham cohomology}\\
 \emph{Hofer-Wysocki-Zehnder}                        &    \emph{Yang} & \\
 & &\\
 & &\\
  &  &\\
  &  &\\
  &  &\\
  &  &\\
 \text{Kuranishi atlas} & \text{virt. orbibundle section} & \text{de Rham homology}\\
  \emph{McDuff-Wehrheim}      &   \emph{Chen-Li-Wang}                             & \emph{Irie}\\
&&\\
&&\\
&&\\
&&\\
&&\\
&&\\
\text{implicit atlas/VFC}&  \text{geom. pert. via Don.}&\\
\emph{Pardon} &  \emph{Cieliebak-Mohnke}&\\
&  \text{and}\;\;\emph{Gerstenberger}&\\};

\path[thick, >= angle 60, ->]
(m-17-1)  edge node [left=0.3 cm, above=0cm]{5.2 PFH \emph{Yang}} (m-2-1)
(m-9-2)   edge node [below, sloped]{} (m-1-1)
(m-17-1)  edge node [above=0.1cm]{2.7} (m-2-2)
(m-17-2)  edge node [above, sloped]{2.8} (m-17-1)
(m-17-2)  edge node [below, sloped]{} (m-17-3)
(m-17-2)  edge node [below, sloped]{} (m-9-3)
(m-10-2)  edge node [above, sloped]{CF-pert.} (m-17-3)
(m-9-2)  edge node [below, sloped]{} (m-9-3)
(m-10-2)  edge node [below, sloped]{} (m-25-3)
(m-25-2)  edge node [below, sloped]{} (m-17-3)
(m-17-2)  edge node [below, sloped]{} (m-25-3)
(m-17-2) edge node [below, sloped]{}(m-1-3)
(m-1-3)  edge [bend right=12] node [above,sloped]{5.1 KH/virtual chain \emph{Joyce}} (m-1-1)
(m-17-2) edge node [below, sloped]{} (m-33-1);

\path[thick, >= angle 60, <->]
(m-2-2) edge node [below, sloped]{}(m-9-2)
(m-10-2) edge node [below, sloped]{}(m-17-2)
(m-18-2) edge node [below, sloped]{}(m-25-2)
(m-18-1) edge node [below, sloped]{}(m-25-1)
(m-25-1) edge node [below, sloped]{}(m-25-2)
(m-18-1) edge node [below, sloped]{}(m-25-2)
(m-1-2) edge node [below, sloped]{}(m-1-3)
(m-18-1)  edge node [below=.8 cm, right=0 cm]{? \ref{OPENORTODO}} (m-33-2)
(m-25-1) edge node []{} (m-10-2);
\end{tikzpicture}

\caption[Unification diagram: Identifications of structures and algebraic constructions in virtual theories]{Unification diagram: Identifications of structures and algebraic constructions in virtual theories. (GCS stands for good coordinate system, K. stands for Kuranishi structure, sp. stands for space, pert. stands for perturbation, geom. pert. via Don. means geometric perturbation via Donaldson divsior, and virt. stands for virtual.)}\label{UNIFDIAG}

\end{figure}

\subsection{McDuff-Wehrheim's Kuranishi atlas and identifications with sc-Fredholm section, FOOO's Kuranishi structure}

References for Kuranishi atlas are \cite{MW} and \cite{MNEW}. After completion of this article, three more McDuff-Wehrheim papers appeared: \cite{MW1} and \cite{MW2} are updates of \cite{MW}; and \cite{MW3} follows up \cite{MNEW} and deals with the non-trivial stabilizer case. Paper \cite{MW1} contains brief comments and remarks on various approaches of virtual theory, which can complement this article. In a Kuranishi atlas, Kuranishi charts are indexed by (a subset of) the power set of a finite set $J$ indexing a finite cover (without coordinate changes). This finite cover consists of fiberwise independent\footnote{The independence is achieved by a slight perturbation of the data using the underlying geometry.} data for local finite dimensional reductions, then each Kuranishi chart is produced from the local finite dimensional reduction of the summed data in the overlap (if there is an ambient space, or constructed from the underlying geomertic setting). Using the knowledge that the index set for the collection of final local finite dimensional reductions being subsets of a given $J$ with order determined by the inclusion, one can shrink using the dual polyhedron see \cite{MW} 7.1.1 and 7.1.2\footnote{This process is called reduction there, but in order not to confuse with finite dimensional reductions discussed extensively in this paper, we will call it shrink along the dual polyhedron (of intersection pattern of the cover).} and obtain that two charts intersect if and only if one index is a subset of another index, namely a good coordinate system is obtained using the knowledge of how charts arised from summing and then by shrinking accordingly. (This contrasts with the mechanism for obtaining a good coordinate system from a Kuranishi structure.)

To go from an sc-Fredholm section $f:B\to E$ to a Kuranishi atlas, we proceed along the lines of \ref{FORGETFULCONSTRUCTION} with appropriate changes. Since McDuff-Wehrheim uses finite dimensional reductions similar to \ref{STABILIZATION} in abstract terms rather than \ref{PFDFDRED}, we will use the former, although either can be used (and are really the same by \ref{SAMEFDRED} (1)).

\begin{enumerate}
\item Similarly to \ref{FORGETFULCONSTRUCTION}(1), we choose presummable data $\{(L_{x_i}, W_{x_i})\}_{x_i\in I}$, but we do not need to choose $U_{x_i}$ here. Recall from \ref{PFDFDRED} and \ref{STABILIZATION} that $L_{x_i}\to W_{x_i}$ for each $x_i\in I$ can be assumed to be a trivial bundle $\psi_{x_i}(W_{x_i}\times N_{x_i})$ and is $G_{x_i}$-invariant, $f$ is in good position to $L_{x_i}$, and $f^{-1}(0)\subset \bigcup_{x_i\in I}W_{x_i}$. The collection is globally invariant.
\item Here we have a few choices, and we explain one here and comment another two ways in the remark below. Recalling \ref{INDDOESNOTM}, we can do finite dimensional reduction \ref{STABILIZATION}, despite $L_{x_i}$ might be dependent in an uncontrolled way in $E$, the result of this choice is R-equivalent to results of the other two ways. Denote $J:=\underline{I}$. For any $K\subset J$, let $W_K:=\emptyset$ if $\cap_{\underline{x_i}\in K}\underline{W_{x_i}}=\emptyset$ in $\underline{B}$. Recall, $\underline{\;\;}$ means taking the quotient under group action to become a subset in the orbit space $\underline{B}$; and for $z, z'\in \underline{x_i}$, we have $\underline{W_z}=\underline{W_{z'}}$ due to global invariance. If the intersection is non-empty, we can choose $x^K_i$ for each $\underline{x_i}\in K$ such that $\cap_{x^K_i} W_{x^K_i}\not=\emptyset$, and denote it by $W_K$, and two choices will give rise to $W_K$'s identified sc-diffeomorphically under the morphism of $B$. Denote $N_{K}:=\oplus_{x^K_i} N_{x^K_i}$. Then by \ref{STABILIZATION} we have $\mathbf{f}^{N_K}$, and its finite dimensional reduction is a Kuranshi chart $pr_2|_{(\mathbf{f}^{N_K})^{-1}(0)}=:s_K:V_K\to N_K$.
\item Denote $\mathbb{J}:=\{K\subset J\;|\; W_K\not=\emptyset\}$. Let $K_1, K_2\subset \mathbb{J}$ such that $K_1\subset K_2$, then $N_{K_1}\subset N_{K_2}$, and using $\psi_{x^{K_j}_i}$ and the morphism structure of $E$, we have embedding of $\mathbf{f}^{N_{K_1}}|_{W_{K_2}\times N_{K_1}}$ into $\mathbf{f}^{N_{K_2}}$ in the sense of a local version of \ref{SCFREDEMBEDDING}. From this we have a coordinate change $s_{K_1}$ into $s_{K_2}$ with the domain of coordinate change $V_{K_2K_1}:=(\mathbf{f}^{N_{K_1}}|_{W_{K_2}\times N_{K_1}})^{-1}(0)$.
\item To describe the group actions, one uses section \ref{ORBIFOLD} (j) ($V_K$ can also be helpfully regarded as subset of $M(\{(G_{x^K_i}, W_{x^K_i}\times N_K)\})$ in \ref{ORBIFOLD} (i)). This conforms with McDuff's orbifold version of Kuranishi atlas in \cite{MNEW}.
\end{enumerate}

\begin{remark}\label{AMAZINGLYTHESAME}
In (2) above, one can also pass to $\mathbf{f}:\mathbf{B}\to\mathbf{E}$ with corresponding $\mathbf{L}_{x_i}, x_i\in I$ as in \ref{FORGETFULCONSTRUCTION} (ii). Now $\mathbf{L}_{x_i}$ are independent, one can then use either \ref{STABILIZATION} as above or use \ref{PFDFDRED}, to get a Kuranishi atlas. One can find out that two results agree with the above result exactly on the nose, essentially due to \ref{SAMEFDRED} and the settings.
\end{remark}

After shrinking along the dual polyhedron of intersection patterns of $V_K, K\in\mathbb{J}$, one obtains a good coordinate system $s_K|_{V'_K}$ in the sense of \ref{GCS} (after transferring orbifold viewpoints as in section \ref{ORBIFOLD}), because after this shrinking, whenever two footprints $X_K|_{V'_K}$ (the images in $X$ of $\underline{(s_K|_{V'_K})^{-1}(0)}$ in the orbit space under homeomorphisms) intersect, then the index set of one has to be subset of the other, therefore the embedding exists. Thus, we can talk about a refinement of a shrunken Kuranishi atlas just as in \cite{DI}. The construction from an sc-Fredholm section into an R-equivalence class of such a good coordinate system is well-defined.

To convert a Kuranishi atlas into an sc-Fredholm section is easy, as a Kuranishi atlas can shrink into a good coordinate system, and one just apply the globalization construction in subsection \ref{GLOBALIZATION}.

One could go between Kuranishi structures and Kuranishi atlases via sc-Fredholm sections, but the direct conversion is also illuminating.

First we make a remark:

If one remembers the subcollection $I_x\cong\underline{I_x}$ of $\underline{x_i}\in J$ from which $E_x$ is originated from, then apply \ref{GCSWOIND} to the resulting $\mathcal{K}_X$ which is always maximal and topological matching. Because of the last condition in \ref{GCSWOIND}, whenever $X_y|_{\tilde U_y}\cap X_x|_{\tilde U_x}\not=\emptyset$, we have $y\in X_x$ or $x\in X_y$ in $\mathcal{K}_X$, namely $I_y\subset I_x$ or $I_x\subset I_y$. Thus it behaves like the shrinking from a Kuranishi atlas. However, the point in Kuranishi structure theory discussed in subsection \ref{KTHEORY} is that we do not need to remember this information and can still construct a good coordinate system from other considerations, and this becomes more flexible in dealing with fiber products relevant to moduli spaces with coners and finer structures such as SFT or Lagrangian Floer theory.

A Kuranishi atlas gives rise to a good coordinate system by shrinking which in turn induces a Kuranishi structure, see \ref{ALLTHESAME}.

To go from a Kuranishi structure to a Kuranishi atlas, first apply \ref{FOOOISDIR} and \ref{GCSWOIND}, one has a good coordinate system with global group actions on charts and coordinate changes and with the maximality and topological matching conditions. After one has maximality of coordinate changes, one can construct Kuranishi charts for finite intersections of $X_x$'s induced from the chart with the largest order defined in \ref{GCSWOIND} (for example), which is already a Kuranishi atlas\footnote{After changing the orbifold description, or already so if doing the Kuranishi charts associated to the intersections in this description directly.}. It is interesting to note that the tripling process in \cite{DI} (essential for constructing level-1 `fiber product'-like structure) plays the same role as taking finite intersection and then shrink along the dual polyhedron, although tripling is not needed in \cite{DI} for perturbation theory on a single good coordinate system (and if one compares just via Kuranishi cobordisms, then it is also not needed to show perturbation result being independent of choices made).

\subsection{Chen-Li-Wang's virtual obibundle section, and identifications with sc-Fredholm section, FOOO's Kuranishi structure and MW's Kuranishi atlas}\label{VIRTUALSECTION}

Virtual orbifold theory in \cite{CLW} is based on \'etale-proper Lie groupoid (actually a weak version of it which roughly says that coordinate changes are continuous but when restricting to the part of the object space consisting of smooth geometric maps are smooth).

Going from an sc-Fredholm section to a section in a virtual orbifold bundle, one applies a nice trick in \cite{CLW} 3.1.2 due to Bohui Chen.\footnote{Note that the globalization construction plus this trick will not provide a simpler proof of the existence of level-1 structure as the globalization uses a level-structure.} One needs (sc-) smooth cut-off function, so let us restrict to the case that the sc-Fredholm section $f:B\to E$ where $B$ is based on retracts $(O=r(U),E)$ with 0-level $E_0$ being separable Hilbert spaces (already so for moduli space constructions anyway). The rough idea is after applying a clever cut-off of \ref{INDDOESNOTM} ($L_{x_i}$ does not require to be indepedent as we have observed conceptually and used before), one applies finite dimensional reductions, and get Kuranishi charts indexed by power set of a covering as well, but also the projection between bases of charts with containment between index sets as well as projections between bundles come automatically for free. Here although one perturbs the section, it is still an R-equivalent sc-Fredholm problem, see \ref{DIMREDEX}. On the other hand, it might be more difficult to obtain a similar structure this way for moduli spaces with corners where the boundary of the structure in the inductive step is a union of fiber products of moduli spaces already constructed. So to carry out the conversion from an sc-Fredholm section to a virtual section, the only new input for this method is a local finite dimensional reduction for sc-Fredholm section as in \ref{STABILIZATION}. Then the rest is verbatim as in \cite{CLW} 3.1.2 and I will not reproduce their punchline here.

To go from a section in a virtual bundle to an sc-Fredholm section, one just applies the procedure in subsection \ref{GLOBALIZATION}. The structure in a virtual bundle is a sub-structure of level-1 structure if one organizes the projection/submersion data between bundles and bases into coordinate changes between charts. So the same construction will work, see details of subsection \ref{GLOBALIZATION} in the forthcoming \cite{DIII}.

To go from an FOOO's Kuranishi structure to a section in a virtual bundle, one gets a level-1 good coordinate system with global group actions for charts and coordinate changes after invoking \ref{FOOOISDIR} and \ref{LEVEL1CATEXIST}, then induces Kuranishi charts from finite intersections of charts in a level-1 good coordinate system as in the last subsection, which is still level-1. Then reorganize the level-1 data of coordinate changes into projections between bundles and bases and forget the extra information, one will get a section in a virtual bundle indexed by the power set of the index set of the level-1 good coordinate system. One also needs to convert the orbifold description into an \'etale-proper Lie groupoid one as in section \ref{ORBIFOLD}.

To go from a section in a virtual bundle to an FOOO's Kuranishi structure, one just forgets about submersions/projections data and do a shrinking along the dual polyhedron, and obtains a good coordinate system, which induces a Kuranishi structure.

To go from a section in a virtual bundle to Kuranishi atlas, one just forgets the projection data.

To go from a Kuranishi atlas to a section in a virtual bundle, one first shrink along the dual polyhedron to get a good coordinate system which induces a Kuranishi structure, then applies \ref{FOOOISDIR} to get a Kuranishi structure with the maximality and topological matching condition, and then apply \ref{LEVEL1CATEXIST} to get a level-1 good coordinate system, then take the induced level-1 good coordinate system indexed by a power set after taking finite intersections, then organize the level-1 structure into a virtual bundle to obtain a virtual section.

\begin{remark} Although a level-1 structure and virtual bundle have similar data, but they come from different motivations and origins. A level-1 structure can be equipped to any Kuranishi structure (up to refinement) after much hard work in \cite{DI} and it is useful for constructing inductive perturbations and do roof constructions and will work for moduli spaces with corners and finer structures of `the boundaries being fiber products'; whilst the virtual bundle is obtained for free by a clever trick to a perturbed Fredholm section, but might need non-trivial generalization for moduli spaces with corners and it requires existence of cut-off functions and thus cannot be applied to all sc-Fredholm sections.
\end{remark}

\subsection{Joyce's d-orbifold and Joyce's Kuranishi space and relation with FOOO's Kuranishi structure and others}

Joyce has two equivalent space structures: a d-orbifold \cite{Joyce} is a derived geometric object and can be thought of as the orbifold version of 2-categorical truncation of a derived manifold of Spivak; and a Kuranishi space \cite{JK} is a cover of a Hausdorff secound countable space $X$ by Kuranishi neighborhoods together with a collection of 1-morphisms between each pair of neighborhoods intersecting in footprints in each direction (despite two charts having different dimensions in general), such that 1-morphisms are relaxed versions of coordinate changes, invertible up to 2-morphisms, and compatible up to 2-isomorphisms, where those 2-isomorphisms are also a part of the data. Joyce has focused more on the second approach recently. Before diving into a Kuranishi space formulated in a 2-categorical and orbifold setting, \cite{JK} introduces a 1-category truncation based on manifolds, called $\mu$-Kuranishi structure, where coordiante changes are considered up to equivalence \cite{JK} definition 2.3 (2.1), and a key piece of data saying two coordinate changes are equivalent can be considered as and will be upgraded into a 2-isomorphism. We will discuss this key notion of equivalence below from the viewpoint of a level-1 structure. For a Kuranishi space, the coordinate changes are generalized maps formulated in bibundle form, see \cite{JK} 4.2 and 2-morphism is defined in \cite{JK} 4.3, considered up to equivalence \cite{JK} 4.3 (4.2), also see \cite{2CAT} 3.1. A Kuranishi space is defined in \cite{JK} 4.17, \cite{2CAT} 4.1.

The exact expression saying two 1-morphisms are 2-isomorphic might not be easy to absorb at the first sight, and I will try to illustrate it below.

The reason for the form of equivalence relation (2-isomorphisms) between coordinate changes in \cite{JK} (2.1) is as follows: We can find a left inverse for a coordinate change $(\phi_{xy}, \hat{\phi}_{xy}, U_{xy})$ up to some precompact shrinking of charts (more precisely, a left inverse for the restricted coordinate change $(\phi_{xy}, \hat{\phi}_{xy}, U'_{xy})=(C_y|_{U'_y}\to C_x|_{U'_x})$), e.g., $$(\phi_{xy}^{-1}|_{\phi_{xy}(U'_{xy})}\circ\pi_{xy}, \hat\phi_{xy}^{-1}|_{\hat\phi_{xy}(E_y|_{U'_{xy}})}\circ\hat{\pi}_{xy}, W_{xy})$$ defined using the data in a level-1 structure \ref{LEVELONECC} chosen for this restricted coordinate change provides such a left inverse, and it is a coordinate change in the sense of \cite{JK} 2.3. However, the composition in the other order is of course not the identity if there is a dimension jump, and the form of the equivalence is there to absorb the difference of this to the identity, as well as the ambiguity of choices of left inverses, and the form of dependence on the Kuranishi section in the equivalence means that these identifications happen only in the direction slice restricting to which the Kuranishi section is diffeomorphic. To unravel the meaning a bit, letting $\iota_{xy}: \phi_{xy}(U'_{xy})\to W_{xy}$ and $\hat\iota_{xy}:\hat\phi_{xy}(E_y|_{U'_{xy}})\to E_x|_{W_{xy}}$, we want to show that $(\iota_{xy}\circ\pi_{xy}:W_{xy}\to W_{xy},\hat\iota_{xy}\circ \hat\pi_{xy}: E_x|_{W_{xy}}\to E_x|_{W_{xy}}, W_{xy})$, which is the composition of the above left inverse and the restricted coordinate change (in the sense of FOOO) in the other order, as a coordinate change in Joyce's sense is equivalent to the identity coordinate change.

To verify for the base map, we want to show that for any smooth function $h:W_{xy}\to\mathbb{R}$, we have $$h\circ id=h\circ \iota_{xy}\circ\pi_{xy}+\Lambda((s_x|_{W_{xy}})\otimes (\iota_{xy}\circ\pi_{xy})^\ast dh)+O((s_x|_{W_{xy}})^2)\footnote{This is equivalent to \cite{JK} 2.3 (2.1) with terminology defined in 2.1 (v), as the equivalence is an equivalence relation and hence in particular symmetric.}$$ for some $\Lambda$ (not depending on $h$ for the case below). Since $\pi_{xy}$ is a submersion, we can choose local coordinates $(x,v)$ with the base and fiber coordinate as $x$ and $v$. $h(x,v)-h(x,0)=D_2 h(x,0)v+ \tilde h(x,v)(v\otimes v)$ for a smooth section $\tilde h$ of $((\ker\pi_{xy})^\ast)^{\otimes 2}\to W_{xy}$. Let $\nabla^{\text{vert}} s_x: \ker d\pi_{xy} \to (E_{xy}|_{W_{xy}})/\tilde E_{xy}$ be defined using some connection on $E_{xy}|_{W_{xy}}$\footnote{Different choices have the same effect due to the error term.} with $\tilde E_{xy}$ from the level-1 structure and it is a bundle isomorphism over $W_{xy}$\footnote{Due to the transversality condition in \ref{LEVELONECC} (3).}, and let $\text{quot}: E_x|_{W_{xy}}\to (E_{xy}|_{W_{xy}})/\tilde E_{xy}$ be the bundle quotient map. Then choose $\Lambda$ to be $(\nabla^{\text{vert}} s_x)^{-1}\circ \text{quot}$, and then $\Lambda(s_x|_{W_{xy}}\otimes \alpha)(\cdot)$ is $(x,v)\mapsto (x,v,0,v)\alpha$ up to an order-two error, where the fourth coordinate on the right hand side is the direction of $(\ker d\pi_{xy})_{(x,v)}\subset T_{(x,v)}W_{xy}$. In terms of the above coordinates, $\iota_{xy}\circ\pi_{xy}$ is $\pi: (x,v)\mapsto (x,0)$, and $(\Lambda((s_x|_{W_{xy}})\otimes \pi^\ast dh))(x,v)=D_2 h(x,0) v$ up to an order-two error, because on $\ker d\pi_{xy}$ the coordinate of $x$ is fixed. The equivalence of the bundle map is verified using the same $\Lambda$. This shows that this left inverse\footnote{Hence all the left inverses by the remark in the above paragraph.} to an FOOO coordinate change is an inverse up to this equivalence.

To summarize the above discussion, the data of a level-1 good coordinate system consists of coordinate changes in Joyce's sense in both directions between any two charts whose coverages/footprints intersect (coordinate changes and projections/submersions in the level-1 structure going the opposite way), and the collection of these coordinate changes are compatible up to 2-isomorphisms (including up to group actions), and all are invertible up to 2-isomorphism as shown above. The other properties (besides invertibility up to 2-isomorphism showed above) follow from projections/submersions being left inverses and compatibility of level-1 coordinate changes. Thus, a level-1 good coordinate system contains data for the underlying good coordinate system to be a Kuranishi space. One can compare this with Joyce's approach in \cite{JK} 4.72. One might think of Joyce's equivalence/2-isomorphisms between coordinate changes conceptually as a R-equivalence of (coordinate changes as) local maps.

The upshots for the 2-category formulation in \cite{JK} are that morphisms between two spaces are local as a sheaf and the fiber product will have the universal property, and although these two properties have not been used in symplectic geometry so far, they are essential in Joyce's approach to moduli spaces as he wants to mimick the approach in algebraic geometry (functor of points) and construct the moduli 2-functor.

Joyce claimed in \cite{2CAT} 4.19 that he can construct an FOOO's Kuranishi structure from his Kuranishi space, which is important and seems non-trivial to have functorial behavior and the proof is not available at the time of writing. After applying \ref{FOOOISDIR}, \ref{LEVEL1CATEXIST} and the above argument turning a level-1 good coordinate system into a Kuranishi space, one obtains a Kuranishi space from an FOOO Kuranishi structure. Since d-orbifolds are claimed to be equivalent to Kuranishi spaces, these two structures are identified with previous sc-Fredholm section, Kuranishi structure, Kuranishi atlas and virtual section via the identification of Kuranishi space with FOOO's Kuranishi structure and previous identifications. Joyce also discusses how to turn FOOO's Kuranishi structure, Kuranishi atlas, and polyfold Fredholm structure (via the forgetful functor in \ref{FORGETFULCONSTRUCTION}) into his Kuranishi space and he shows that the R-equivalence becomes an equivalence in $\mathbf{Kur}$ in the update of \cite{JK} and preprint \cite{2CAT}.

\section{Other virtual theories and pairwise identifications up to R-equivalences. Part B: Algebraic construction for invariants}


\subsection{Joyce's Kuranishi homology and identification with rational singular homology}

Joyce has a non-perturbative homology theory in \cite{KH}, called Kuranishi homology. Namely, one replaces the role of each simplex in the singular chain by a Kuranishi structure\footnote{In \cite{KH}, certain good coordinate systems is used instead of Kuranishi structures as done here, but one can also use Kuranishi structures by \ref{ALLTHESAME} meanwhile one needs to show that the theory factors through to R-equivalence classes.}, and one places some gauge-fixing data to make sure each generator in the chain group has finite automorphisms (or the theory will be trivially zero, and also note that the standard simplex also has finite automorphisms). The boundary operator is just taking the boundary in the way similar to the boundary in the singular chain complex. The upshot is that, for example, after being equipped with gauge fixing data and passing to certain equivalence class, a moduli space $\overline{\mathcal{M}}_{g,m,A}$ equipped with a Kuranishi structure and natural maps $\text{st}\times \text{ev}$ to $\overline{\mathcal{M}}_{g,m}\times Q^{\times m}$ will be a Kuranishi cycle in Kuranishi homology of $\overline{\mathcal{M}}_{g,m}\times Q^{\times m}$ without perturbation. This becomes extremely convenient when moduli spaces have finer structures (but one needs to be able to cook up gauge-fixing data that respect finer structures).

To do calculation and relate to classical homology groups, one can explore exact sequences or excisions, or as Joyce did in \cite{KH} one can use equivalence relations built-in and local perturbations to show that Kuranishi homology of an orbifold is isomorphic to its rational singular homology.

Joyce said that he will replace the above version by virtual chains for Kuranishi spaces in M-(co)homology in a follow-up to \cite{MHOM}, and the proof for quasi-isomorphism between virtual chain complex and singular chain complex is expected to be more elegant than the proof in \cite{KH}. An overview can be found in \cite{2CAT}. On the surface, I think that there seems to be an analogy between this to (an orbifold version with corners of) de Rham chain/homology of Irie \cite{Irie} (see \ref{DERHAMCHAIN} below) following an idea of Fukaya, using a section instead of a (compactly supported) de Rham form on the smooth chains, with maybe more complicated relations to turn a map from a compact Kuranishi space with corners into a virtual chain.

\subsection{Polyfold Fredholm homology, identification with Kuranishi homology, rational singular homology and HWZ's de Rham theory}

The analogue of polyfold Fredholm homology respecting finer structure (e.g. appearing in SFT or Lagrangian Floer moduli spaces) will be constructed in \cite{Yang}. Using the polyfold--Kuranishi correspondence \ref{PKC}, this will be shown to be isomorphic to Kuranishi homology. Using globalization, ideas from e.g. normalization and perturbation, this will be shown to be isomorphic to rational singular homology. Via singular homology, its relation with HWZ's de Rham theory \cite{Polyfoldinteg} can be connected via triangulating a (normalization of) weighted branched suborbifold.

\subsection{Identification of singular homology with de Rham cohomology (after coefficient change) using FOOO's CF-pertubation and CLW's virtual integration}

Recall from \ref{CWPISCF} for the first two. The last two are identified using the level-1 structure for CF-perturbation as in \ref{CWPISCF} and conversion in \ref{VIRTUALSECTION}.\footnote{Note the fact that the collection of canonical sections in CF-perturbation constructed from level-1 structure is level-1 R-equivalent to level-1 structure associated to starting Kuranishi structure, although we do not need to use it to prove the statement here.} Note that $\{\omega_i\}_i$ produced in \ref{CWPISCF}, if the original index set is from a subset of cover index and after ungrouping the index $i$'s back to the original, is the virtual Euler form and the forms chosen for all the $\ker \hat\pi_{ij}$ are called transition Thom forms in CLW's language, see \cite{CLW} 3.11 and 3.14.

\subsection{CF-perturbation and identification with Irie's de Rham homology via a level-1 structure/CF-perturbation and triangulation}\label{DERHAMCHAIN}

Inspired by a notion of Fukaya in \cite{Fukayaloop} 6.4, Irie defines a de Rham homology theory \cite{Irie} (recently merged with another paper of his to become \cite{IrieNew}) to be able to define string topology operation on the chain level. If considering a Kuranishi structure equipped with a map into an orbifold or any differentiable space (in the sense of diffeology) as Irie pointed out, one uses level-1 structure and CF-pertrubation as in subsection \ref{CWPISCF}, normalizes (or inductively triangulates) and degenerates the solution of a CF-perturbed section along piecewise smooth hypersurfaces (or faces of simplices) and push them to infinity, then obtains a de Rham chain (cycle if Kuranishi structure has no boundary), as the data from different charts related by level-1 coordinate changes are deemed equivalent in de Rham chain theory on the level of formulation, see \cite{Irie} 2.3 (Z3) or \cite{IrieNew} 2.3. One can also do multisectional perturbation and triangulate, then degenerating along the boundaries of simplices to get a singular chain as a special case of a de Rham chain. Both de Rham chains agree with the integration along the fiber/pushout of CF-perturbation theory in \cite{FOOOCF} 7.7.

\subsection{Pardon's implicit atlas and virtual chain and identification with CF-perturbation via a level-1 structure}

Pardon constructed an implicit atlas and virtual chain package in \cite{Pardon}, and the nice feature is that it works when coordinate changes between local models of the moduli space under consideration are only required to be continuous. Locally, the picture is similar to the integration in the CF-perturbation and the virtual integration, to globalize, instead of using structure of moduli spaces to glue, he homotopy glues certain homotopy types of local models (constructed from the initial structure of the moduli space similar to a Kuranishi atlas) on the level of chain complexes via mapping cones. Pardon's cohomology theory constructed (or a class of the degree of the virtual dimension) satisfies homotopy cosheaf/Mayer-Vietoris axiom respect to a \v Cech cover.\footnote{On the formal level, it looks like an example of factorization homology theory or topological chiral homology, explained to me by Gregory Ginot.} 

After obtaining to a level-1 good coordinate system indexed by subsets of the index of a cover from a Kuranishi atlas, one can use the level-1 structure and de Rham chain complex to carry out Pardon's first proposal \cite{Pardon} (2.3.6) and we will explain below. For clarity, we will carry out his proposal for his example of two charts without group action. For a general cover with group actions, one needs an orbifold version of de Rham chain\footnote{Where the domain of de Rham chain has a global effective action, one puts one more relation as in \cite{KH} allowing lifting group action via a division by its order as coefficient, and one fixes an embedding of the quotient space into $\R^N$ so that the automorphism group of the chain is finite, a form of gauge fixing data.} and more combinatorial expression. Pardon's $X_\alpha, X_{\alpha\beta}$ and $U_\alpha$ will correspond to $U_{(\alpha)}, U_{(\alpha\beta)}$ and $s_{(\alpha)}^{-1}(0)$ in our notation and we will use the latter below in order to compare with other parts of the article. We use unprimed notations for the level-1 Kuranishi atlas for notational clarity, and we can choose $W_{(\alpha\beta)(\alpha)}=U_{(\alpha\beta)(\alpha)}$ in the level-1 structure etc. Write $\overset{\leftharpoonup}{\pi}_{(\alpha\beta)(\alpha)}:=(\phi_{(\alpha\beta)(\alpha)})^{-1}|_{\phi_{(\alpha\beta)(\alpha)}(U_{(\alpha\beta)(\alpha)})}\circ\pi_{(\alpha\beta)(\alpha)}: U_{(\alpha\beta)(\alpha)}=W_{(\alpha\beta)(\alpha)}\to U_{(\alpha)}$. With the compatible $\omega_{(\alpha)}$ chosen (as in \ref{CWPISCF}),

\begin{enumerate}
\item the (absolute) de Rham chain giving rise to a class in $$H_{\text{dim} E_{(\alpha)}}(U_{(\alpha)}, U_{(\alpha)}\backslash s_{(\alpha)}^{-1}(0))$$ that corresponds to the non-trivial class in $H_{\text{dim} E_{(\alpha)}}(E_{(\alpha)}, E_{(\alpha)}\backslash 0)$ in \cite{Pardon} (2.3.1) is $(Id_{(\alpha)}: U_{(\alpha)}\to U_{(\alpha)}, s_{(\alpha)}^\ast \omega_{(\alpha)})\in C_\ast^{dR}(U_{(\alpha)})$;
\item the de Rham chain giving rise to a class in $$H_{\ast}(U_{(\alpha\beta)}, U_{(\alpha\beta)}\backslash s_{(\alpha\beta)}^{-1}(0))$$ that corresponds to the non-trivial class in $H_{\text{dim} E_{(\alpha)}}(E_{(\alpha)}, E_{(\alpha)}\backslash 0)$ after \cite{Pardon} (2.3.1) and (2.3.6) and projecting to the first coordinate is $(\overset{\leftharpoonup}{\pi}_{(\alpha\beta)(\alpha)}: U_{(\alpha\beta)}=W_{(\alpha\beta)(\alpha)}\to U_{(\alpha)}, s_{(\alpha)}^\ast \omega_{(\alpha)})\in C_\ast^{dR}(U_{(\alpha)})$; and
\item the de Rham chain giving rise to a class in $$(\cdot\cap [s^{-1}_{(\beta)}(0)])(H_\ast(U_{(\alpha\beta)}, U_{(\alpha\beta)}\backslash s_{(\alpha\beta)}^{-1}(0)))\subset H_\ast(U_{(\alpha)}, U_{(\alpha)}\backslash s_{(\alpha)}^{-1}(0))$$ that corresponds to the non-trivial class in $H_{\text{dim} E_{(\alpha)}}(E_{(\alpha)}, E_{(\alpha)}\backslash 0)$ is $$(\overset{\leftharpoonup}{\pi}_{(\alpha\beta)(\alpha)}: U_{(\alpha\beta)}=W_{(\alpha\beta)}\to U_{(\alpha)}, s_{(\alpha)}^\ast \omega_{(\alpha)}\wedge s_{(\beta)}^\ast\omega_{(\beta)})\in C_\ast^{dR}(U_{(\alpha)}).$$
\end{enumerate}

In de Rham chains, $(Id_{(\alpha\beta)}: U_{(\alpha\beta)}\to U_{(\alpha\beta)}, s_{(\beta)}^\ast\omega_{(\beta)})=(\text{incl}: s_{(\beta)}^{-1}(0)\to U_{(\alpha\beta)},1)$ (via projection $\overset{\leftharpoonup}{\pi}_{(\alpha\beta)(\alpha)}$ and the formalism of de Rham chains, see \cite{Irie} 2.3 (Z3)), and intersection theory of de Rham chains\footnote{The observation that de Rham chains can be used as a chain-level commutative and associative chain model for intersection theory of manifolds seems to be new but should be implicit in Irie's work \cite{Irie}. It would be interesting to interpret results in Fulton's Intersection theory book from this viewpoint.} is the fiber product on the map part (which is always submersion by definition) and the wedge product on the de Rham form part. With the help of a (globally compatible) level-1 structure, this realization of chain level intersection theory, and the above correspondence we realize \cite{Pardon} (2.3.6) on the chain level (which is the relative version of below), namely,
\begin{align*}(\overset{\leftharpoonup}{\pi}_{(\alpha\beta)(\alpha)},s_{(\alpha)}^\ast\omega_{(\alpha)})\cap (Id_{(\alpha\beta)}, s_{(\beta)}^\ast\omega_{(\beta)})&=(\overset{\leftharpoonup}{\pi}_{(\alpha\beta)(\alpha)},s_{(\alpha)}^\ast\omega_{(\alpha)}\wedge s_{(\beta)}^\ast\omega_{(\beta)})\\
&=(Id_{(\alpha)},s_{(\alpha)}^\ast\omega_{(\alpha)}),
\end{align*}
where the second identity is again a part of the formalism of de Rham chains. The cases for the second projection and for an arbitrary finite number of charts with group actions in general are similar. It is clear that the above identification on the overlaps and globalization of Pardon's local constructions will have the same effect as integration in the CF-perturbation and virtual integration, and the above is also a geometric analogue of Pardon's homotopic version via \v Cech cochains.

One notes that after obtaining (which requires smoothness in general) a global level-1 structure (as topological submersions/projections) of a Kuranishi atlas to be used as a globalizing method, one might be able to do constructions (not using de Rham chains though) in continuous category to get an invariant; while Pardon achieved globalization via homotopic algebra in the continuous category, see \cite{Pardon} 2.3.12 and 4.2.6. Pardon also commented that his machinery should agree with other abstract perturbation methods.

\section{What are not included and possible future follow-up}\label{OPENORTODO}

There is also a geometric perturbation method applicable for any general closed symplectic manifolds via Donaldson divisors initiated by Cieliebak-Mohnke for genus-0 Gromov-Witten theory in \cite{CM}. Gerstenberger further developed the technique to treat general genus Gromov-Witten theory in \cite{G}. There is also another approach for general genus Gromov-Witten theory by Ionel-Parker in \cite{IP}, where the (in)dependence on Donaldson divisors is still being developed. Gerstenberger and the author will investiage the relation between polyfold perturbation and geometric perturbation via domain stabilizing Donaldson divisor in the near future, which involves a version of Morse-Bott target-$S^1$-equivariant SFT of degenerating along a contact hypersurface with $S^1$-action via polyfolds and expressing the polyfold Fredholm virtual class in terms of CMG-pseudocycles of reduced invariants weighted by enumeratively meaningful coefficients.

There are other interesting theories in various general or specialized settings, some in algebraic geometry and others in symplectic geometry, such as Lurie and Toen-Vezzosi's derived moduli stack, Behrend-Fantechi's intrinsic normal cone and perfect obstruction theory, Li-Tian's virtual moduli cycle, Lee's virtual structure sheaf, Ciocan-Fontanine--Kapranov's dg-scheme, Behrend's symmetric obstruction theory, Kiem-Li's cosection localization, Hutchings-Taubes's obstruction bundle gluing (a special case of gluing in Kuranishi structures but with its specific features), and Lu-Tian's virtual Euler class (and Wang's recent work on removing a technical assumption). Relationship and compatibility of each of the above with the virtual theories discussed in this paper can be investigated in certain common framework, but this is beyond the scope of this article at least for now.

The compatibility of various gluing methods might also be investigated and surveyed by the author. If they are compatible in a certain sense under the identifications in this article, then each virtual theory can run independently and yet produce the same or a relatable result.

\bibliographystyle{amsplain}

\end{document}